\documentclass[11pt,reqno]{amsart}
\usepackage[tt=false]{libertine}
\usepackage{amssymb}
\usepackage{mathtools}
\usepackage[varbb]{newpxmath}
\usepackage[foot]{amsaddr}

\let\savedbigtimes\bigtimes
\let\bigtimes\relax
\usepackage{mathabx}
\let\bigtimes\savedbigtimes

\usepackage[margin=1in, bottom=1in]{geometry}
\usepackage{graphicx}
\usepackage{caption}
\usepackage{subcaption}
\usepackage{enumitem}
\usepackage{comment}
\usepackage{bbm}

\usepackage[usenames,dvipsnames]{xcolor}
\usepackage[colorlinks=true,
  linkcolor=teal!60!black,
  citecolor=PineGreen,
  urlcolor=RedViolet]{hyperref}
\hypersetup{bookmarksopen=true}

\usepackage[yyyymmdd,hhmmss]{datetime}

\usepackage{tikz}
\usetikzlibrary{decorations.pathreplacing,calligraphy}
\usetikzlibrary{calc}

\newtheorem{thm}{Theorem}[section]
\newtheorem{lem}[thm]{Lemma}
\newtheorem{ppn}[thm]{Proposition}

\newtheorem{cor}[thm]{Corollary}

\theoremstyle{definition}
\newtheorem{dfn}[thm]{Definition}

\theoremstyle{remark}

\numberwithin{equation}{section}
\def\beq#1\eeq{%
    \begin{equation}%
    #1%
    \end{equation}%
}
\def\baln#1\ealn{%
    \begin{align}%
    #1%
    \end{align}%
}
\def\balnn#1\ealnn{%
    \begin{align*}%
    #1%
    \end{align*}%
}

\def\lt{\left}
\def\rt{\right}
\def\fr{\frac}
\def\la{\langle}
\def\ra{\rangle}
\def\eps{\varepsilon}

\def\bI{{\boldsymbol{I}}}

\def\bW{{\boldsymbol{W}}}

\def\bxi{{\boldsymbol{\xi}}}

\def\be{{\boldsymbol{e}}}

\def\bg{{\boldsymbol{g}}}

\def\bm{{\boldsymbol{m}}}

\def\bu{{\boldsymbol{u}}}
\def\bv{{\boldsymbol{v}}}

\def\bx{{\boldsymbol{x}}}
\def\by{{\boldsymbol{y}}}

\def\bbE{{\mathbb{E}}}

\def\bbN{{\mathbb{N}}}

\def\bbP{{\mathbb{P}}}
\def\bbQ{{\mathbb{Q}}}
\def\bbR{{\mathbb{R}}}
\def\bbS{{\mathbb{S}}}

\def\bbW{{\mathbb{W}}}

\def\bbone{{\mathbb{1}}}

\def\cA{{\mathcal{A}}}
\def\cB{{\mathcal{B}}}

\def\cD{{\mathcal{D}}}
\def\cE{{\mathcal{E}}}
\def\cF{{\mathcal{F}}}
\def\cG{{\mathcal{G}}}
\def\cH{{\mathcal{H}}}

\def\cN{{\mathcal{N}}}

\def\cP{{\mathcal{P}}}

\def\cR{{\mathcal{R}}}

\def\cW{{\mathcal{W}}}
\def\cX{{\mathcal{X}}}

\def\sD{{\mathscr{D}}}

\def\Airy{{\mathsf{Airy}}}
\def\de{{\mathsf{d}}}
\def\diag{{\mathsf{diag}}}

\def\GOE{{\mathsf{GOE}}}
\def\Law{{\mathsf{Law}}}

\def\sph{{\mathsf{sph}}}

\def\Tr{{\mathsf{Tr}}}

\def\unif{{\mathsf{unif}}}
\def\Var{{\mathsf{Var}}}

\def\oB{{\bar{\operatorname{B}}}}

\def\fP{{\mathfrak P}}

\title{Overlap distribution of the critical Sherrington--Kirkpatrick model}

\author{Hang Du \and Brice Huang}

\address[H.~Du]{Department of Mathematics, Massachusetts Institute of Technology}
\email[H.~Du]{hangdu@mit.edu}

\address[B.~Huang]{Department of Statistics, Stanford University}
\email[B.~Huang]{bmhuang@stanford.edu}

\date{\today}

\subjclass[2020]{Primary 82B44; secondary 60K35, 60B20, 82B27.}

\begin{document}

\begin{abstract}
  We study the distribution of the two-replica overlap $R_{1,2}$ in the Ising and spherical Sherrington--Kirkpatrick models at the critical inverse temperature $\beta = 1$.
  Our main result shows that in both models, $R_{1,2}$ has scale $N^{-1/3}$, and the quenched distribution of $N^{1/3} R_{1,2}$ converges to an explicit random probability measure defined in terms of the reflected $\Airy_1$ point process.
  As a consequence, we characterize the limiting value of $N^{2/3} \bbE \la R_{1,2}^2 \ra$, answering a question of Talagrand \cite{talagrand2011mean2}.

  For the spherical SK model, we obtain the limit by representing the Gibbs measure as an anisotropic Gaussian on $\bbR^N$ conditioned to have norm $\sqrt{N}$, and then passing to the $\Airy_1$ scaling limit at the GOE spectral edge.
  For the Ising SK model, the proof is based on a sphere-to-cube comparison principle showing that the quenched distributions of $N^{1/3} R_{1,2}$ under the spherical and Ising Gibbs measures asymptotically coincide.

  This paper is a companion to \cite{du2026fluctuations}, where we introduced a related comparison principle to identify the limiting fluctuations of the SK free energy.
  Most of the arguments in this paper were generated using GPT-5.6 Pro, with the aim of exploring further consequences of the ideas developed in that work.
\end{abstract}

\maketitle

\setcounter{tocdepth}{1}
\tableofcontents

\section{Introduction and main results}
\label{s:intro}

Let $N$ be a positive integer and $\bW \sim \GOE(N)$ be an $N\times N$ symmetric matrix whose upper-triangular entries are independent, with $W_{i,i} \sim \cN(0, 2/N)$ and $W_{i,j} \sim \cN(0, 1/N)$ for $i<j$.
The Sherrington--Kirkpatrick (SK) model \cite{sherrington1975solvable} has Hamiltonian $H_N : \bbR^N \rightarrow \bbR$ given by
\[
  H_N(\bx) = \fr12 (\bW \bx, \bx)\,.
\]
This is equivalently the Gaussian process on $\bbR^N$ with covariance
\[
  \bbE H_N(\bx)H_N(\by) = \fr{N}{2} R(\bx,\by)^2\,,
\]
where $R(\bx,\by) = (\bx,\by)/N$ is the overlap of $\bx,\by$.
At inverse temperature $\beta \ge 0$, the SK model Gibbs measure $\mu_{N,\beta}$ is the measure supported on $\Sigma_N = \{\pm 1\}^N$ with
\balnn
  \mu_{N,\beta}(\bx) &= \fr{e^{\beta H_N(\bx)}}{2^N Z_{N,\beta}}\,, & 
  Z_{N,\beta} = \fr{1}{2^N} \sum_{\bx \in \Sigma_N} e^{\beta H_N(\bx)}\,.
\ealnn
Let $\la \cdot \ra_\beta$ denote average with respect to $\mu_{N,\beta}$, conditional on $\bW$.
Define the two-replica overlap $R_{1,2} = R(\bx^1,\bx^2)$, where $\bx^1,\bx^2$ are independent samples from $\mu_{N,\beta}$.

In this paper, we study the quenched (i.e. conditional on $\bW$) distribution of $R_{1,2}$.
At high temperature, the overlap distribution is Gaussian.
Indeed, for fixed $\beta < 1$, Guerra and Toninelli \cite{guerra2002central} showed that
\beq\label{e:GT}
  \la \delta((N(1-\beta^2))^{1/2} R_{1,2}) \ra_\beta
  \rightarrow \cN(0,1)
\eeq
in probability.
Here $\delta(x)$ denotes the Dirac delta at $x\in \bbR$, so that the left-hand side of \eqref{e:GT} is a $\bW$-measurable, probability measure-valued random variable.
We remark that \cite{guerra2002central} also addresses the more general setting where the SK model has an external field, and its main result covers a high temperature region above the Almeida--Thouless line.

Talagrand \cite[Chapter 11]{talagrand2011mean2} studied in detail the behavior of overlaps near the critical inverse temperature $\beta=1$.
He showed \cite[Theorems 11.7.1--11.7.2]{talagrand2011mean2} that at $\beta = 1 - c_N N^{-1/3}$, \eqref{e:GT} continues to hold if $c_N \rightarrow +\infty$, but not if $c_N \rightarrow c \in (0,+\infty)$.
Thus, for $\beta$ close to $1$, it is expected that the (suitably rescaled) overlap distribution has a nontrivial limit, which is a genuinely random and non-Gaussian probability measure.

Our main result characterizes this limit at criticality $\beta=1$.
Our result holds for both the SK model and the spherical SK model \cite{kosterlitz1976spherical}, whose Gibbs measure $\mu^\sph_{N,\beta}$ is supported on $S_N = \sqrt{N}\bbS^{N-1}$ with
\balnn
  \de \mu^\sph_{N,\beta}(\bx) &= \fr{e^{\beta H_N(\bx)}}{Z^\sph_{N,\beta}} \,\de \nu_N(\bx)\,, & 
  Z^\sph_{N,\beta} &= \int_{S_N} e^{\beta H_N(\bx)} \,\de \nu_N(\bx)\,.
\ealnn
Here $\nu_N$ is the normalized surface measure on $S_N$.
Let $\la \cdot \ra^\sph_\beta$ denote quenched average with respect to $\mu^\sph_{N,\beta}$.
We will abbreviate $\la \cdot \ra = \la \cdot \ra_{\beta=1}$ and $\la \cdot \ra^\sph = \la \cdot \ra^\sph_{\beta=1}$.

We next define our notion of convergence for probability measure-valued random variables.
Let $\cP_2(\bbR)$ be the space of square-integrable probability measures on $\bbR$, and $\cP_2(\cP_2(\bbR))$ be the space of probability measures $\zeta$ on $\cP_2(\bbR)$ where $\bbE_{\mu \sim \zeta} \|\mu\|_{L^2}^2 < \infty$.
We metrize $\cP_2(\bbR)$ with the $2$-Wasserstein metric $\bbW_2$, and $\cP_2(\cP_2(\bbR))$ with the $2$-Wasserstein metric $\cW_2$ relative to this metric on $\cP_2(\bbR)$.
Explicitly, for $\zeta, \zeta' \in \cP_2(\cP_2(\bbR))$,
\[
  \cW_2^2(\zeta,\zeta')
  = \inf\lt\{
    \bbE_{(\mu,\mu') \sim \pi} \bbW_2^2(\mu,\mu') : 
    \text{$\pi$ coupling of $\zeta,\zeta'$}
  \rt\}\,.
\]
The limiting overlap distribution is defined in terms of the reflected $\Airy_1$ point process.
A realization of this process is a sequence $\chi = (\chi_k)_{k\ge 1}$ of real numbers such that $\chi_1 < \chi_2 < \cdots$.
The main relevant property of this process is that for $\lambda_1 \ge \cdots \ge \lambda_N$ the eigenvalues of $\bW \sim \GOE(N)$,
\beq\label{e:airy-convergence}
  \sum_{k=1}^N \delta(N^{2/3}(2 - \lambda_k))
  \rightarrow
  \sum_{k=1}^\infty \delta(\chi_k)
\eeq
in distribution with respect to the vague topology.
See \cite[Chapter 7.8]{forrester2010log} for a textbook treatment.
Our main result is the following.
\begin{thm}\label{t:main}
  Let $\fP_{a(\chi)} \in \cP_2(\bbR)$ be defined in Definitions~\ref{d:limit-distribution} and \ref{d:a} below.
  This is a probability measure-valued random variable, measurable with respect to a realization $\chi$ of the reflected $\Airy_1$ point process, whose law is in $\cP_2(\cP_2(\bbR))$ by Proposition~\ref{p:limit-distribution-square-integrable-random}.
  As $N\to\infty$, we have
  \begin{enumerate}[label=(\alph*)]
    \item \label{i:main-cube} $\Law(\la \delta (N^{1/3} R_{1,2}) \ra)
    \stackrel{\cW_2}{\longrightarrow}
    \Law(\fP_{a(\chi)})$.
    \item \label{i:main-sphere} $\Law(\la \delta (N^{1/3} R_{1,2}) \ra^\sph)
    \stackrel{\cW_2}{\longrightarrow}
    \Law(\fP_{a(\chi)})$.
  \end{enumerate}
\end{thm}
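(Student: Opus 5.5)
The plan is to prove part \ref{i:main-sphere} first by an explicit computation, and then deduce part \ref{i:main-cube} from it by a sphere-to-cube comparison, as the abstract indicates.

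\textbf{Spherical model.} Diagonalize $\bW = \sum_k \lambda_k \bu_k \bu_k^\top$ and write $\bx = \sum_k y_k \bu_k$. Then $H_N(\bx) = \fr12\sum_k \lambda_k y_k^2$, and $\mu^\sph_{N,1}$ is the law of a Gaussian with covariance $\diag((\theta-\lambda_k)^{-1})$, conditioned on $\|\by\|^2 = N$, for any $\theta > \lambda_1$. I choose $\theta = 2 + \gamma N^{-2/3}$ with $\gamma$ a saddle-point parameter.

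Under the unconditioned Gaussian, $\|\by\|^2 = \sum_k g_k^2/(\theta - \lambda_k)$. The bulk part $\sum_{k > K} (\theta-\lambda_k)^{-1}$ equals $N - N^{2/3} a_N + o(N^{2/3})$, with a random correction that is asymptotically a function of the edge eigenvalues. The $K$ edge terms are of order $N^{2/3} g_k^2/(\gamma+\chi_k)$.

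Conditioning the norm to equal $N$ therefore amounts, after dividing by $N^{2/3}$, to the constraint
\[
  \sum_{k \le K} \fr{g_k^2}{\gamma + \chi_k} + (\text{bulk fluctuation}) = a(\chi).
\]
Here the bulk fluctuation is of order $N^{-1/6}$ relative to this scale, so it becomes a deterministic shift. Since $\gamma$ drops out of the conditioned law, I fix it conveniently.

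The overlap of two replicas is $N^{1/3}R_{1,2} = N^{-2/3}\sum_k y^1_k y^2_k$. The bulk modes contribute a Gaussian of variance $N^{-4/3}\sum_{k>K}(\theta-\lambda_k)^{-2}$, which stays bounded by edge rigidity. The edge modes contribute the dominant non-Gaussian part. This should reproduce the description of $\fP_{a(\chi)}$ in Definitions~\ref{d:limit-distribution}--\ref{d:a}.

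The convergence then follows in three steps:
\begin{enumerate}[label=(\roman*)]
  \item Use \eqref{e:airy-convergence} and a Skorokhod coupling to make the edge eigenvalues converge almost surely.
  \item Show that the map $\chi \mapsto \fP_{a(\chi)}$ is continuous in the truncation parameter $K$, and that the truncation error vanishes uniformly as $K \to \infty$ (tail sums $\sum_k \chi_k^{-2}$ and $\sum_k \chi_k^{-3/2}$ converge, since $\chi_k \asymp k^{2/3}$).
  \item Upgrade to $\cW_2$ convergence by uniform integrability of $\bbE\la (N^{1/3}R_{1,2})^4\ra^\sph$. This reduces to controlling the smallest gap $\gamma+\chi_1$ and the conditioning density, which needs a lower-tail bound on the gap $2-\lambda_1$ of order $N^{-2/3}$.
\end{enumerate}

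\textbf{Ising model.} It remains to show that $\la \delta(N^{1/3}R_{1,2})\ra$ and $\la\delta(N^{1/3}R_{1,2})\ra^\sph$ are asymptotically $\bbW_2$-close in probability. Following the comparison principle of \cite{du2026fluctuations}, I would write the Ising Gibbs measure using a Hubbard--Stratonovich or TAP-type decomposition. The low-dimensional edge projection $(\bx,\bu_k)_{k\le K}/N^{1/3}$ should have asymptotically the same tilted law under uniform measure on $\Sigma_N$ as under $\nu_N$. This holds by a local CLT for projections of the cube onto the $K$ delocalized edge eigenvectors, together with the fact that the bulk contribution is self-averaging in both cases.

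Concretely, I would compare the test functions $\la \exp(i t N^{1/3}R_{1,2})\ra$ on the two models. I would reduce each to an integral over the edge coordinates with a near-identical density, using cumulant expansions of $\log\cosh$ versus the spherical constraint. The fourth-cumulant difference is precisely what the critical normalization absorbs into $a(\chi)$. Finally, I would check that moment bounds transfer, so that $\cW_2$ convergence follows.

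\textbf{Main obstacle.} The hardest step is this cube comparison, and specifically justifying the local CLT for edge projections under the exponential tilt at scale $N^{1/3}$ uniformly in the eigenvector realization. I would first try conditioning on delocalization of $\bu_1,\dots,\bu_K$ and using Edgeworth expansions with explicit error $N^{-1/6}$.
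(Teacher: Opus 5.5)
Your plan for part~(a) has a genuine gap at its core. You want to compare the quenched Ising and spherical measures for a \emph{fixed} $\bW$. The tool you propose is a local CLT for the edge projections $((\bx,\bu_k)/N^{1/3})_{k\le K}$ under the uniform measure on $\Sigma_N$, plus self-averaging of the bulk. But $\la\cdot\ra$ is the tilt of the uniform measure by $\exp(\fr12\sum_k\lambda_k(\bx,\bu_k)^2)$. This tilt reshapes all $N$ modes, bulk included: the Gibbs measure is at relative entropy of order $N$ from the uniform measure. Moreover, on the cube the edge and bulk projections are not independent. So a local CLT for edge projections under the \emph{uniform} measure says nothing directly about their law under the Gibbs measure. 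You would need a local limit theorem under the full tilt, i.e.\ a quenched description of the critical Ising Gibbs measure, and no available Hubbard--Stratonovich or TAP representation gives this at the required precision. The bulk self-averaging you invoke is essentially the whole content of the comparison; even $Z_N/Z_N^\sph\to1$ is a nontrivial theorem of \cite{du2026fluctuations}. Also, nothing is absorbed into $a(\chi)$: the two limits are identical.

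The missing idea is never to analyze the Ising measure for fixed $\bW$, and instead to use the orthogonal invariance of the GOE at the annealed level. By \eqref{e:reweighted-mt}, $\bbE[X_N^2\la g(R_{1,2})\ra]$ and $\bbE\la g(R_{1,2})\ra^\sph$ are integrals of the \emph{same} kernel $J_N(q)$ against the overlap law of two independent uniform points of $\Sigma_N$, respectively $S_N$. These laws agree up to relative error $O(N^{-1}+q^2+Nq^4)$ on the window $|q|\lesssim N^{-1/3}$, and exponential moments kill the tails (Lemma~\ref{l:annealed-convergence}). The annealed statement is upgraded to a quenched one via the Hilbert-space embedding with kernel $e^{tN^{1/3}R(\bx,\by)}$, $t\ge0$. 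Rotational invariance makes the cross term equal $\bbE L_N^\sph(t)$, so $\bbE\|X_N\bm_N-\bm_N^\sph\|^2=\bbE[X_N^2L_N(t)]-\bbE L_N^\sph(t)\to0$ (Lemma~\ref{l:hilbert-barycenter-convergence}). This needs real Laplace transforms; the characteristic functions you propose give a complex symmetric, non-Hermitian kernel with no such feature map. Combined with $X_N\to1$, uniqueness of Laplace transforms and compactness then yield Theorem~\ref{t:sphere-to-cube}.

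Your spherical outline is close to the paper's: anisotropic Gaussian conditioned on the norm, a Skorokhod coupling, continuity, and uniform integrability. Two steps are off, though.

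First, the tail $\sum_{k>K}a_{N,k}(g_k^2-1)$ of the constraint does not fluctuate at order $N^{-1/6}$. Its variance is $2\sum_{k>K}a_{N,k}^2\asymp K^{-1/3}$ uniformly in $N$, dominated by the near-edge modes $K<k\ll N$; your $N^{-1/6}$ only counts modes with $k\asymp N$. So it cannot be replaced by a deterministic shift at fixed $K$. This is exactly why the limit $\nu_a$ conditions on an infinite-dimensional null event, with $n$-marginals carrying the tail density $p_{a,>n}$ as in \eqref{e:canonical-pin-marginal-s}. The paper avoids truncated approximations altogether:
\begin{itemize}
\item Lemma~\ref{l:finite-spherical-pin} gives the exact identity $\la\delta(N^{1/3}R_{1,2})\ra^\sph=\fP_{a_N}$, with the saddle $\gamma$ chosen so the constraint is centered.
\item Lemma~\ref{l:pinned-law-continuity} shows $\fP_{c^{(n)}}\to\fP_a$ in $\bbW_2$ whenever $c^{(n)}\to a$ in $\ell^2$.
\item Everything then reduces to $a_N\Rightarrow a(\chi)$ in $\ell^2$.
\end{itemize}
That last step requires $\Delta_N\to\Delta(\chi)$ jointly with the gaps. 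This in turn needs the renormalized sum $\Xi_N=\sum_{k\ge2}d_{N,k}^{-1}-N^{1/3}$ to converge to $\Xi(\chi)$ (Lemma~\ref{l:finite-truncation}, resting on Landon--Sosoe's edge estimates), which your sketch does not address. Also, $\sum_k\chi_k^{-3/2}$ diverges because $\chi_k\asymp k^{2/3}$; only $\sum_k\chi_k^{-2}$ is summable, and $\sum_k 1/d_k$ must be renormalized.

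Second, for uniform integrability, a tail bound on $2-\lambda_1$ alone does not control the smallest gap $\Delta_N$. Indeed $1/\Delta_N\ge-\Xi_N$ depends on the whole near-edge configuration. The paper instead imports the exponential moment bound of Proposition~\ref{p:exponential-moment-bound}.
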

\noindent The following corollary affirmatively resolves \cite[Conjecture 11.7.5]{talagrand2011mean2}.
\begin{cor}
  \label{c:main}
  Let $V_{a(\chi)}$ be defined in Proposition~\ref{p:limit-distribution-square-integrable-deterministic} and Definition~\ref{d:a} below.
  This is a positive real-valued random variable, measurable with respect to $\chi$, with $\bbE_\chi V_{a(\chi)} < \infty$ by Proposition~\ref{p:limit-distribution-square-integrable-random}.
  Then,
  \[
    \lim_{N\to\infty} N^{2/3} \bbE \la R_{1,2}^2 \ra
    = \lim_{N\to\infty} N^{2/3} \bbE \la R_{1,2}^2 \ra^\sph
    = \bbE_\chi V_{a(\chi)}\,.
  \]
\end{cor}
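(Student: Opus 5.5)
The corollary should follow directly from Theorem~\ref{t:main} by reading off second moments; $\cW_2$ convergence is exactly strong enough for this. The plan is to identify $N^{2/3}\la R_{1,2}^2\ra$ as a second-moment functional of the random measure $\mu_N := \la \delta(N^{1/3}R_{1,2})\ra$ (resp.\ $\mu_N^\sph$), and show this functional is continuous in $\cW_2$ after taking expectation. Concretely, for $\mu \in \cP_2(\bbR)$ let $m_2(\mu) = \int x^2\,\de\mu(x) = \bbW_2^2(\mu,\delta(0))$. Then
\[
  N^{2/3}\la R_{1,2}^2\ra = m_2(\mu_N)\,,
  \qquad
  N^{2/3}\bbE\la R_{1,2}^2\ra = \bbE\, \bbW_2^2(\mu_N,\delta(0))
  = \cW_2^2\bigl(\Law(\mu_N), \delta_{\delta(0)}\bigr)\,,
\]
where $\delta_{\delta(0)}$ is the point mass on the Dirac measure at $0$. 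The last equality holds because the only coupling with a point mass is the product coupling.

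Similarly, I expect that by Proposition~\ref{p:limit-distribution-square-integrable-deterministic}, $V_{a(\chi)}$ is (or is defined as) the second moment $m_2(\fP_{a(\chi)})$. In that case $\bbE_\chi V_{a(\chi)} = \cW_2^2(\Law(\fP_{a(\chi)}), \delta_{\delta(0)})$, which is finite by Proposition~\ref{p:limit-distribution-square-integrable-random}. If instead $V_a$ is defined as a variance, the limit measure should be symmetric: $\mu_N$ is invariant under $R_{1,2}\mapsto -R_{1,2}$ by the $\bx\mapsto-\bx$ symmetry, and this passes to the limit. The variance then equals the second moment, and I would check that against the definitions.

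The conclusion then uses only the triangle inequality for the metric $\cW_2$ on $\cP_2(\cP_2(\bbR))$:
\[
  \bigl| \cW_2(\Law(\mu_N),\delta_{\delta(0)}) - \cW_2(\Law(\fP_{a(\chi)}),\delta_{\delta(0)}) \bigr|
  \le \cW_2(\Law(\mu_N),\Law(\fP_{a(\chi)})) \to 0
\]
by Theorem~\ref{t:main}\ref{i:main-cube}. Squaring gives $N^{2/3}\bbE\la R_{1,2}^2\ra \to \bbE_\chi V_{a(\chi)}$. The spherical case is identical using part~\ref{i:main-sphere}.

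The only step with any content is identifying $V_{a}$ with $m_2(\fP_a)$ from the later definitions. All the analytic difficulty, in particular the uniform integrability of $N^{2/3}R_{1,2}^2$, is already built into the $\cW_2$ (rather than weak) convergence in Theorem~\ref{t:main}.
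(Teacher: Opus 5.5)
Your proposal is correct and is essentially the argument the paper intends. The paper gives no separate proof and reads the corollary off from Theorem~\ref{t:main}: $N^{2/3}\bbE\la R_{1,2}^2\ra=\cW_2^2\bigl(\Law(\la\delta(N^{1/3}R_{1,2})\ra),\delta_{\delta(0)}\bigr)$, and $V_a=\|\fP_a\|_{L^2}^2=\bbW_2^2(\fP_a,\delta(0))$ is the second moment by definition (also recorded at the end of the proof of Lemma~\ref{l:parameter-measurability}), so your $\cW_2$ triangle-inequality argument gives both limits and your hedge about $V_a$ being a variance is unnecessary.
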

\noindent We next formally define the limiting objects $\fP_{a(\chi)}$, $V_{a(\chi)}$.
\begin{ppn}[{\cite[Theorem 6.1]{landon2022fluctuations}}]
  \label{p:Xi}
  Let $\chi = (\chi_k)_{k\ge 1}$ be a realization of the reflected $\Airy_1$ point process.
  For $k\ge 1$, define $d_k = \chi_k - \chi_1$ and $t_k = (\fr32 \pi k)^{2/3}$.
  Then the (random) limit
  \[
    \Xi(\chi) = \lim_{k\to\infty} \lt(\sum_{j=2}^k \fr{1}{d_j} - \fr{1}{\pi} \int_0^{t_k} x^{-1/2} \,\de x\rt)
  \]
  exists $\chi$-almost surely.
\end{ppn}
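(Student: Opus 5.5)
We need $\lim_k(\sum_{j=2}^k 1/d_j - \frac{2}{\pi}t_k^{1/2})$ to exist almost surely; note $\frac{1}{\pi}\int_0^{t_k}x^{-1/2}\,\de x = \frac{2}{\pi}t_k^{1/2} = \frac{2}{\pi}(\frac32\pi k)^{1/3}$. The plan is to compare the partial sums of $1/d_j$ with the same integral against the limiting one-point density of the reflected $\Airy_1$ process. That density is asymptotically $\rho(x)\approx \frac{1}{\pi}x^{1/2}$ as $x\to\infty$. This is consistent with the definition of $t_k$: $\int_0^{t_k}\frac1\pi x^{1/2}\,\de x = \frac{2}{3\pi}t_k^{3/2}=k$, so $t_k$ is the classical location of $\chi_k$.

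\emph{Step 1: rigidity.} I would invoke, or prove from the determinantal/Pfaffian structure of the GOE edge kernel, a rigidity estimate for the reflected $\Airy_1$ process: almost surely, for some random $C$ and any $\eps>0$,
\[
|\chi_k - t_k| \le C k^{-1/3+\eps}\quad\text{for all } k.
\]
Counting-function variance bounds give this: the number of points in $[0,x]$ has mean $\frac{2}{3\pi}x^{3/2}+O(1)$ and variance $O(\log x)$. Chebyshev plus Borel--Cantelli along a geometric sequence of $x$, combined with monotonicity, then gives $|n(x)-\frac{2}{3\pi}x^{3/2}|\le x^{\eps}$ eventually. Inverting this yields the rigidity bound. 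Since $d_j=\chi_j-\chi_1$, it follows that $d_j = t_j + O(1)$ with the $O(1)$ random but bounded.

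\emph{Step 2: rewrite the sum.} Write
\[
\sum_{j=2}^k \frac1{d_j} = \int_{(0,\,d_k]} \frac{1}{x}\,\de n_1(x),
\]
where $n_1$ counts the points $d_j$, $j\ge2$. Subtract $\int_{0}^{t_k}\frac{1}{\pi}x^{-1/2}\,\de x = \int_0^{t_k} \frac1x\,\de\bigl(\frac{2}{3\pi}x^{3/2}\bigr)$. Integrating by parts, the difference reduces to two pieces:
\begin{itemize}
\item a boundary term $\frac{n_1(x)-\frac{2}{3\pi}x^{3/2}}{x}$ evaluated near $x\approx t_k$, which tends to $0$ by Step 1 since $x^{\eps}/x\to0$;
\item the integral $\int_1^{t_k}\frac{n_1(x)-\frac{2}{3\pi}x^{3/2}}{x^2}\,\de x$, plus finitely many terms near $0$, which are finite almost surely since the $d_j>0$ are a.s. distinct and positive.
\end{itemize}
Because $|n_1(x)-\frac{2}{3\pi}x^{3/2}|\le C+x^{\eps}$, the integrand is $O(x^{\eps-2})$, which is absolutely integrable. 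Hence the integral converges as $k\to\infty$. The mismatch between $d_k$ and $t_k$ contributes $O(|d_k-t_k|\cdot t_k^{-1/2})\to0$.

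\emph{Main obstacle.} The key input is Step 1: an almost-sure bound on the counting-function discrepancy of order $o(x)$, uniform in $x$. It must be $o(x)$ rather than merely $o(x^{3/2})$ so that dividing by $x$ in the boundary term still gives $o(1)$, and also $O(x^{1-\delta})$ so that $\int x^{-2}(\cdot)$ converges. I would derive it from the variance asymptotics of linear statistics for the $\Airy_1$ (GOE edge) Pfaffian kernel. Alternatively, I would transfer the known GOE edge rigidity (Erd\H{o}s--Yau--Yin) through \eqref{e:airy-convergence}, uniformly in $N$, using Fatou-type arguments. This is the step on which the whole argument rests.
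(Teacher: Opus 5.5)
Your overall architecture is sound: an almost-sure bound on the counting discrepancy, followed by Abel summation. But Step 1 does not deliver what you claim, and one later claim is false.

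\textbf{Step 1 overclaims.} Write $F(x)=\frac{2}{3\pi}x^{3/2}$. Chebyshev with variance $O(\log x)$ plus Borel--Cantelli on a geometric grid cannot give $|n(x)-F(x)|\le x^{\eps}$. Monotonicity between $x$ and $2x$ only controls $n$ up to $F(2x)-F(x)\asymp x^{3/2}$. To interpolate at cost $x^{\theta}$ you need mesh $\asymp x^{\theta-1/2}$. That means $\asymp X^{3/2-\theta}$ grid points in $[X,2X]$, each failing with probability $\lesssim X^{-2\theta}\log X$, so the Borel--Cantelli sum converges only for $\theta>1/2$. The second-moment input therefore yields $|n(x)-F(x)|=O(x^{1/2+\eps})$ and $|\chi_k-t_k|=O(k^{\eps})$. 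It does not yield $O(k^{-1/3+\eps})$, which would need exponential-type concentration. So $d_j=t_j+O(1)$ is unjustified.

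\textbf{The bound on $n_1$ is false.} Even granting optimal rigidity, $|n_1(x)-F(x)|\le C+x^{\eps}$ fails. Since $n_1(x)=n(x+\chi_1)-1$ and $F(x+\chi_1)-F(x)\approx \pi^{-1}\chi_1 x^{1/2}$, the discrepancy of $n_1$ is genuinely of order $x^{1/2}$.

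\textbf{Neither error is fatal.} Step 2 only uses two facts: $|n_1-F|=O(x^{1-\delta})$, for the boundary term and absolute integrability of $x^{-2}(n_1-F)$; and $|d_k-t_k|=o(k^{1/3})$, for the endpoint mismatch. The corrected bounds give both.

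\textbf{Your fallback does not work.} Transferring Erd\H{o}s--Yau--Yin rigidity fails because its $N^{\zeta}$ loss (cf.\ \eqref{e:upper-edge-rigidity}) diverges as $N\to\infty$ with $k$ fixed. It therefore gives nothing for the limiting process; you need estimates that are uniform in $N$.

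\textbf{Comparison with the paper.} The paper does not prove this statement; it imports it from Landon--Sosoe. The Airy$_1$ input from that work which the paper does quote, \eqref{e:airy-rigidity-input}, is exactly a Chebyshev bound with $\log k$ variance. So your key input is the right one. With it, though, the integration by parts is unnecessary:
\begin{itemize}
\item Taking $s=k^{1/6+\eps}$ in \eqref{e:airy-rigidity-input} and applying Borel--Cantelli gives $|\chi_k-t_k|\le k^{1/6+\eps}$ for all large $k$. Hence $\frac{1}{d_j}-\frac{1}{t_j}=O(j^{-7/6+\eps})$.
\item Since $\int_{t_{j-1}}^{t_j}\pi^{-1}x^{1/2}\,\de x=1$, the increment $\pi^{-1}\int_{t_{j-1}}^{t_j}x^{-1/2}\,\de x$ lies between $1/t_j$ and $1/t_{j-1}$. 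It is thus within $O(j^{-5/3})$ of $1/t_j$.
\item So the defining expression equals $-\pi^{-1}\int_0^{t_1}x^{-1/2}\,\de x$ plus the partial sums of an absolutely convergent series. The finitely many initial terms are finite because $d_j>0$ almost surely.
\end{itemize}
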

\begin{ppn}[Proved in \S\ref{s:limit-distribution}]
  \label{p:limit-distribution}
  Let $\chi, d_k, t_k, \Xi(\chi)$ be as in Proposition~\ref{p:Xi}.
  The following holds $\chi$-almost surely.
  \begin{enumerate}[label=(\alph*)]
    \item \label{i:limit-distribution-chi} We have $\lim_{k\to\infty} \chi_k / t_k = 1$.
    \item \label{i:limit-distribution-Psi} For $x>0$, define
    \beq\label{e:Psi}
      \Psi(x;\chi) = \fr{1}{x} + \Xi(\chi) + \sum_{k\ge 2} \lt(\fr{1}{x+d_k} - \fr{1}{d_k}\rt)\,.
    \eeq
    This function is well-defined and strictly decreasing on $(0,+\infty)$, with $\lim_{x\downarrow 0} \Psi(x;\chi) = +\infty$ and $\lim_{x\uparrow +\infty} \Psi(x;\chi) = -\infty$.
    Consequently, $\Psi(\cdot;\chi)$ has a unique positive zero $\Delta(\chi)$.
  \end{enumerate}
\end{ppn}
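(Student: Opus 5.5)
For part (a) I will use the convergence \eqref{e:airy-convergence} together with the almost sure existence of $\Xi(\chi)$ from Proposition~\ref{p:Xi}. The first step is to relate $\Xi(\chi)$ to the counting function $n(t) = \#\{k \ge 2 : d_k \le t\}$. Writing
\[
  \sum_{j=2}^k \fr{1}{d_j} = \int_0^{d_k} t^{-1}\,\de n(t)
\]
and comparing with $\fr1\pi \int_0^{t_k} x^{-1/2}\,\de x = \fr{2}{\pi} t_k^{1/2}$, I integrate by parts to get
\[
  \sum_{j=2}^k \fr{1}{d_j} = \fr{n(d_k)}{d_k} + \int_{d_1^+}^{d_k} \fr{n(t)}{t^2}\,\de t,
\]
where $n(d_k) = k-1$. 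The semicircle-edge density is $\fr1\pi\sqrt{x}$, whose integral is $\fr{2}{3\pi}x^{3/2}$; this is exactly why $t_k = (\fr32\pi k)^{2/3}$ is the expected location of the $k$-th point. Convergence of the renormalized sum should force $n(t) \sim \fr{2}{3\pi}t^{3/2}$.

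I expect the main obstacle to be turning this into an actual proof: almost sure convergence of a renormalized sum does not by itself control the individual $d_k$. My first attempt is to pass to the proof behind Proposition~\ref{p:Xi} in \cite{landon2022fluctuations}. There one has rigidity of the $\Airy_1$ process, namely that almost surely $|\chi_k - t_k| \le k^{-1/3+\eps}$ (or at least $o(t_k)$) for all large $k$, inherited from GOE eigenvalue rigidity at the edge via \eqref{e:airy-convergence}. I will cite that rigidity estimate, which immediately gives $\chi_k/t_k \to 1$. If it is not directly quotable, the fallback is a Tauberian argument. Suppose $\limsup \chi_k/t_k > 1+\eps$ along a subsequence. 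Then on a block of indices $k \in [K, (1+\delta)K]$ the points sit too far right, and $\sum 1/d_j$ over that block falls short of $\fr1\pi\int x^{-1/2}$ by an amount of order $K^{1/3}\delta\eps$, which does not go to zero. Monotonicity of $\chi_k$ lets the deficit persist across the block, contradicting the Cauchy property of the convergent sequence in Proposition~\ref{p:Xi}. The case $\liminf < 1-\eps$ is symmetric. To run this I need increments of the partial sums over blocks of size $\delta k$ to tend to zero, which is exactly the Cauchy criterion.

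For part (b), fix $x>0$. Since $d_k \ge 0$ and $d_k \sim t_k \asymp k^{2/3}$ by (a), the summand satisfies
\[
  \lt|\fr{1}{x+d_k} - \fr{1}{d_k}\rt| = \fr{x}{d_k(x+d_k)} \lesssim x\,k^{-4/3},
\]
which is summable. Almost surely $d_k > 0$ for $k\ge2$ since the points are simple. So the series converges locally uniformly on $(0,\infty)$ and $\Psi$ is continuous. Each term $\fr1x$ and $\fr1{x+d_k}$ is strictly decreasing, so $\Psi$ is strictly decreasing. As $x\downarrow0$, the term $\fr1x \to +\infty$ while the remaining series tends to $0$ by dominated convergence, giving $\Psi \to +\infty$.

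For $x \to \infty$, note that
\[
  \sum_{k\ge 2}\lt(\fr{1}{d_k} - \fr1{x+d_k}\rt) \ge \sum_{k:\, d_k \le x} \fr{1}{2d_k},
\]
since $\fr1{x+d_k} \le \fr1{2d_k}$ when $d_k \le x$. By (a), $n(x) \asymp x^{3/2}$, and so $\sum_{d_k\le x} 1/d_k \asymp x^{1/2} \to \infty$. Hence $\Psi \to -\infty$. The intermediate value theorem and strict monotonicity then give a unique zero $\Delta(\chi)$.
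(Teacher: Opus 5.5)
Your proposal is correct. Part (b) is essentially the paper's argument. The one variation is at $x\uparrow+\infty$: you bound $\Psi$ directly using $\fr1{d_k}-\fr1{x+d_k}\ge\fr1{2d_k}$ on $\{d_k\le x\}$, which gives $\Psi(x;\chi)\le \fr1x+\Xi(\chi)-\fr{n(x)}{2x}$. The paper instead lower-bounds $|\Psi'(x;\chi)|\ge N_d(x)/(4x^2)$ and integrates. Both work.

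For part (a), your first route is the paper's route, but be precise about what is citable. \cite[Equations (6.47)--(6.48)]{landon2022fluctuations} give only the tail bound $\bbP(|\chi_k-t_k|>s)\le C(\log k+\log(1+s))/(k^{1/3}s-C)^2$, not an almost-sure statement. The paper therefore takes $s=\eps_kt_k$ with $\eps_k\to0$ slowly; the bound is then roughly $\log k/(\eps_k^2k^2)$, which is summable, and Borel--Cantelli applies. An almost-sure rate $k^{-1/3+\eps}$ with small $\eps$ would not come out of this, since the bound would be roughly $k^{-2\eps}\log k$. Nor can GOE rigidity simply be pushed through \eqref{e:airy-convergence} at fixed $k$, because its $N^{\zeta}$ loss diverges.

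Your Tauberian fallback, by contrast, is a genuinely different and valid proof. Made precise, it runs as follows. If $\chi_K>(1+\eps)t_K$ for a large $K$, then $d_j\ge d_K>(1+\eps/2)t_K$ for all $j>K$. With $K'=\lfloor(1+\delta)K\rfloor$, the increment $\sum_{K<j\le K'}d_j^{-1}-\fr2\pi(t_{K'}^{1/2}-t_K^{1/2})$ is then at most $-c\,\delta\eps K^{1/3}$, provided $\delta\le c'\eps$ so that the $-\delta^2/9$ term in $(1+\delta)^{1/3}$ does not absorb the gain. The case $\chi_K<(1-\eps)t_K$ is handled the same way on the block $((1-\delta)K,K]$. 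These increments are differences of the sequence whose limit is $\Xi(\chi)$, so they must tend to zero; boundedness alone would already suffice.

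This route deduces (a) deterministically from Proposition~\ref{p:Xi} and the ordering of the points, with no further probabilistic input. The paper's route is instead independent of Proposition~\ref{p:Xi} and rests on a quantitative rigidity estimate for the $\Airy_1$ process.
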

\begin{dfn}\label{d:limit-distribution}
  Let $t_k$ be as in Proposition~\ref{p:Xi} and $a = (a_k)_{k\ge 1}$ be a decreasing positive sequence satisfying
  \beq\label{e:a-asymptotic}
    \lim_{k\to\infty} a_kt_k = 1\,.
  \eeq
  Let $g = (g_k)_{k\ge 1}$ be a sequence of independent standard Gaussians.
  Let $\nu_a$ be the probability measure constructed in \S\ref{ss:canonical-pin} (see \eqref{e:canonical-pin-definition}), which we interpret as the distribution of $g$ conditioned on
  \beq\label{e:infinite-dimensional-condition}
    \sum_{k\ge 1} a_k (g_k^2 - 1) = 0\,.
  \eeq
  This construction is nontrivial because the constraint \eqref{e:infinite-dimensional-condition} is a null event involving infinitely many coordinates of $g$.
  For independent samples $g^{(1)}, g^{(2)} \sim \nu_a$, let
  \balnn
    Q_a &= \sum_{k\ge 1} a_k g^{(1)}_k g^{(2)}_k\,, &
    \fP_a &= \Law(Q_a)\,.
  \ealnn
\end{dfn}
\begin{ppn}[Proved in \S\ref{s:limit-distribution}]
  \label{p:limit-distribution-square-integrable-deterministic}
  For any sequence $a$ satisfying \eqref{e:a-asymptotic}, we have $\fP_a \in \cP_2(\bbR)$.
  In particular,
  \[
    V_a = \|\fP_a\|_{L^2}^2 = \sum_{k\ge 1} a_k^2 \Big[\bbE_{\nu_a} [g_k^2]\Big]^2
  \]
  is well-defined.
\end{ppn}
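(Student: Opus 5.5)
We need to show that $Q_a=\sum_k a_k g^{(1)}_k g^{(2)}_k$ is a well-defined, square-integrable random variable under $\nu_a^{\otimes 2}$, and then compute its second moment. The natural order is: establish coordinatewise moment control for $\nu_a$, then expand $Q_a^2$ using independence of the two replicas and a sign symmetry of $\nu_a$.

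\emph{Step 1: moments of $\nu_a$.} I will first establish two structural properties of $\nu_a$ from its construction in \S\ref{ss:canonical-pin}. The first is invariance under coordinatewise sign flips $g_k\mapsto \epsilon_k g_k$. This holds because the constraint \eqref{e:infinite-dimensional-condition} depends only on the $g_k^2$, and the pinning construction (a limit of finite-dimensional conditionings, or a density built from a Fourier/characteristic-function representation of $\sum a_k(g_k^2-1)$) respects this symmetry. The second is a uniform bound $\sup_k \bbE_{\nu_a}[g_k^2] <\infty$, or at least $\sum_k a_k^2 (\bbE_{\nu_a} g_k^2)^2<\infty$.

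For the bound, I will write $\nu_a$ as a disintegration: conditioning on $S=\sum_k a_k(g_k^2-1)=0$, whose law has a bounded continuous density $p$ with $p(0)>0$. By \eqref{e:a-asymptotic}, $a_k\asymp k^{-2/3}$, so $\sum a_k^2<\infty$ and $S$ converges in $L^2$. Fix $k$ and split $S=a_k(g_k^2-1)+S_{-k}$. Then
\[
\bbE_{\nu_a}[g_k^2]=\frac{\bbE[g_k^2\, p_{-k}(-a_k(g_k^2-1))]}{p(0)},
\]
where $p_{-k}$ is the density of $S_{-k}$. Uniform boundedness of $p_{-k}$ in $k$ then gives $\bbE_{\nu_a} g_k^2\le \|p_{-k}\|_\infty/p(0)\le C$.

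The main obstacle is showing $\sup_k\|p_{-k}\|_\infty<\infty$. I would use the characteristic function: removing one coordinate only multiplies $\widehat{p}$ by $|1-2ia_kt|^{1/2}$, and the remaining product $\prod_{j\ne k,\,j\le J}|1-2ia_jt|^{-1/2}$ is integrable uniformly in $k$. Concretely, with $J$ fixed large, this product is at most $(1+ct^2)^{-(J-1)/4}$ because the $a_j$ for $j\le J$ are bounded below. Fourier inversion then bounds $\|p_{-k}\|_\infty$.

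\emph{Step 2: $Q_a\in L^2$ and the formula.} Truncate to $Q_a^{(n)}=\sum_{k\le n}a_kg^{(1)}_kg^{(2)}_k$. By independence of $g^{(1)},g^{(2)}$, each distributed as $\nu_a$, we get
\[
\bbE (Q^{(n)}_a)^2=\sum_{j,k\le n}a_ja_k\big(\bbE_{\nu_a}[g_jg_k]\big)^2 .
\]
Sign-flip invariance kills the cross terms $j\neq k$, leaving $\sum_{k\le n}a_k^2(\bbE_{\nu_a}g_k^2)^2$. The same computation shows that $Q^{(n)}_a$ is Cauchy in $L^2$, with $\bbE(Q^{(m)}_a-Q^{(n)}_a)^2=\sum_{n<k\le m}a_k^2(\bbE g_k^2)^2$. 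This tail tends to $0$ since $\sum a_k^2<\infty$ and $\bbE g_k^2\le C$.

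Hence $Q_a$ exists as an $L^2$ limit, so $\fP_a\in\cP_2(\bbR)$. Finally, $V_a=\bbE Q_a^2=\lim_n\bbE(Q_a^{(n)})^2$ equals the stated series, which is finite.
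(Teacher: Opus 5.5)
Your proposal is correct and follows essentially the same route as the paper: a Fourier-inversion bound giving $\sup_k \bbE_{\nu_a}[g_k^2]<\infty$ (Lemma~\ref{l:pin-moments}), and then sign-flip symmetry plus replica independence, which give the $L^2$ isometry \eqref{e:bilinear-tail-isometry} for the truncations of $Q_a$. The only cosmetic difference is how the numerator is bounded. You bound it by $\|p_{-k}\|_\infty$, using an integrable majorant built from the first $J$ coefficients that is uniform in $k$. The paper instead Fourier-inverts $\Phi_a(u)(1-2iua_k)^{-1}$ directly and uses $|(1-2iua_k)^{-1}|\le 1$.
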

\begin{dfn}\label{d:a}
  Let $\chi$ be a realization of the reflected $\Airy_1$ point process.
  Define $a = a(\chi) = (a_k)_{k\ge 1}$ by 
  \[
    a_k = (\Delta(\chi) + d_k)^{-1}\,.
  \]
  By Proposition~\ref{p:limit-distribution}\ref{i:limit-distribution-chi}, $a$ satisfies \eqref{e:a-asymptotic} $\chi$-almost surely.
\end{dfn}
\begin{ppn}[Proved in \S\ref{s:overlap}]
  \label{p:limit-distribution-square-integrable-random}
  We have $\bbE_\chi V_{a(\chi)} < \infty$, and therefore $\Law(\fP_{a(\chi)}) \in \cP_2(\cP_2(\bbR))$.
\end{ppn}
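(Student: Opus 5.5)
Since the proposition is proved in \S\ref{s:overlap}, the intended route is presumably indirect. I will use the finite-$N$ spherical model to bound $\bbE_\chi V_{a(\chi)}$ by a uniform finite-$N$ quantity, and then pass to the limit via Fatou's lemma. The proof would go as follows.

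First, I will write the spherical Gibbs measure at $\beta=1$ in the diagonal basis of $\bW$. Its coordinates are $x_k = \sqrt{N}\,u_k$, where the vector $u$ has density proportional to $\exp(\fr N2\sum_k \lambda_k u_k^2)$ on the sphere. Choose the saddle point $\theta_N$ so that $\sum_k (\theta_N - \lambda_k)^{-1} = N$. Then $\mu^\sph$ is the Gaussian with independent coordinates of variance $(N(\theta_N-\lambda_k))^{-1}$, conditioned on the norm constraint. After rescaling by $N^{1/3}$, the weights become $a_k^{(N)} = N^{-1/3}(\theta_N-\lambda_k)^{-1}$.

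Writing $\theta_N = 2 + N^{-2/3}\Delta_N - \text{(edge shift)}$, the saddle-point equation becomes the finite-$N$ analogue of $\Psi(\Delta_N;\chi^{(N)})=0$. In this representation, $N^{1/3}R_{1,2} = \sum_k a^{(N)}_k g^{(1)}_k g^{(2)}_k$, where the $g^{(i)}$ are Gaussian vectors conditioned on $\sum_k a^{(N)}_k((g_k)^2-1)=0$. Consequently $N^{2/3}\la R_{1,2}^2\ra^\sph = V_{a^{(N)}}$ exactly, by the same algebra as in Proposition~\ref{p:limit-distribution-square-integrable-deterministic}.

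Next, I will show that $V_{a^{(N)}} \to V_{a(\chi)}$ in distribution along a Skorokhod coupling in which \eqref{e:airy-convergence} holds almost surely. This requires three inputs:
\begin{itemize}
\item convergence $\Delta_N \to \Delta(\chi)$, using monotonicity of $\Psi$ from Proposition~\ref{p:limit-distribution}\ref{i:limit-distribution-Psi} and the rigidity of bulk eigenvalues, which controls the tail sums;
\item coordinatewise convergence $a^{(N)}_k \to a_k(\chi)$;
\item continuity of $a\mapsto V_a$ under coordinatewise convergence together with uniform tail bounds $a_k \lesssim k^{-2/3}$.
\end{itemize}
Fatou's lemma then gives $\bbE_\chi V_{a(\chi)} \le \liminf_N \bbE\, N^{2/3}\la R_{1,2}^2\ra^\sph$.

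It remains to bound the right-hand side uniformly in $N$. I would do this directly from a bound on $V_a$ proved in \S\ref{s:limit-distribution}, which should read
\[
V_a \lesssim a_1^2 + \sum_{k} a_k^2 \lesssim \Delta^{-2} + C\,,
\]
since the conditioned second moments $\bbE_{\nu_a} g_k^2$ are bounded by an absolute constant, with possibly one exceptional coordinate of mass $O(1)$. Under this bound, the problem reduces to showing $\bbE_\chi[\Delta(\chi)^{-2}] < \infty$; the terms $\sum_{k\ge 2} d_k^{-2}$ must also be integrable.

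\textbf{Main obstacle.} The key difficulty is controlling small values of $\Delta$ and small gaps $d_2$. From $\Psi(\Delta)=0$ we get
\[
\fr1\Delta = -\Xi - \sum_{k\ge2}\lt(\fr1{\Delta+d_k}-\fr1{d_k}\rt) \le |\Xi| + \Delta \sum_{k\ge2} d_k^{-2}.
\]
So small $\Delta$ requires $\Xi$ to be large, which in turn demands a small gap $d_2$. I would try to avoid needing moments of $1/d_2$, since these fail at order $2$ for GOE because of linear level repulsion.

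To do so, I would sharpen the bound on $V_a$ so that it involves only $a_1 = 1/\Delta$ and uses $a_2 \le 1/d_2$ with conditioned second moments small when $a_1 \gg a_2$. I would then bound tail probabilities of $\Xi$ using the GOE-level estimate
\[
\bbP\bigl(N^{2/3}\bigl(2-\lambda_1\bigr)\text{-Stieltjes transform at the edge} > t\bigr)
\]
via the known upper-tail estimates for $\Xi$ from \cite{landon2022fluctuations}. If this fails, the fallback is to bound $\bbE\la R_{1,2}^2\ra^\sph$ uniformly in $N$ directly. The tool there is the Cauchy--Schwarz inequality combined with the identity $\la R_{1,2}^2 \ra^\sph = N^{-2}\sum_k \la x_k^2\ra^2$, with the norm constraint giving $\sum_k \la x_k^2\ra = N$ and the tail coordinates controlled by rigidity.
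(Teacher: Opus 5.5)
\textbf{There is a genuine gap: the uniform-in-$N$ second-moment bound, which carries all the content, is left open.}

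Your architecture matches the paper's. You use the exact identity $V_{a_N}=N^{2/3}\la R_{1,2}^2\ra^\sph$ from Lemma~\ref{l:finite-spherical-pin}. (The weights are $a_{N,k}=N^{-2/3}(\gamma-\lambda_k)^{-1}$, not $N^{-1/3}(\gamma-\lambda_k)^{-1}$.) You then take a Skorokhod coupling of $a_N\to a(\chi)$ in $\ell^2$, use pathwise continuity $V_{\widetilde a_N}\to V_{\widetilde a}$ (Lemma~\ref{l:pinned-law-continuity}), and apply Fatou. What is missing is a bound on $\bbE V_{a_N}=\bbE\la(N^{1/3}R_{1,2})^2\ra^\sph$ uniform in $N$, and the route you sketch does not deliver it.

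\begin{itemize}
\item It is logically misaligned. After Fatou you need a finite-$N$ estimate, but you reduce to $\bbE_\chi[\Delta(\chi)^{-2}]<\infty$, which is a statement about the limit. If you had that together with your bound on $V_a$, Fatou would be unnecessary.
\item The bound $V_a\lesssim a_1^2+\sum_k a_k^2$ with an absolute constant is not in \S\ref{s:limit-distribution}. Lemma~\ref{l:pin-moments} only gives $\sup_k\bbE_{\nu_a}[g_k^2]\le\int|\Phi_a|/(2\pi p_a(0))$, a constant that depends on $a$.
\item Your obstacle analysis points the wrong way. Every term $\frac1{x+d_k}-\frac1{d_k}$ is negative, so $\Psi(\Delta)=0$ gives $1/\Delta>-\Xi$. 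Small $\Delta$ therefore corresponds to $\Xi$ being very negative, i.e.\ an isolated top eigenvalue. A small $d_2$ adds $1/d_2$ to $\Xi$ but an almost cancelling amount to $\sum_{k\ge2}(\frac1{\Delta+d_k}-\frac1{d_k})$, so it does not by itself make $\Delta$ small. The non-integrable $1/d_2$ appears only because your bound $1/\Delta\le|\Xi|+\Delta\sum_k d_k^{-2}$ separates these two pieces.
\item Even corrected, an Airy-side proof would need tail bounds on $\Xi$ and gap estimates for the $\Airy_1$ process, which you do not supply.
\item Your fallback gives nothing as stated. With $\sum_k\la x_k^2\ra=N$, Cauchy--Schwarz only yields $N^{-4/3}\sum_k\la x_k^2\ra^2\le N^{2/3}$. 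Improving this means controlling $\la x_1^2\ra\approx(\gamma-\lambda_1)^{-1}=N^{2/3}\Delta_N^{-1}$, i.e.\ bounding $\bbE[\Delta_N^{-2}]$, which is the same problem.
\end{itemize}

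The missing idea is that the uniform bound is an input from the companion paper, not something to extract from the pin structure. Proposition~\ref{p:exponential-moment-bound} (from \cite{du2026fluctuations}) gives $\bbE\la e^{cN^{1/3}|R_{1,2}|}\ra^\sph\le 2$. Combined with $x^2\le\frac{4}{c^2e^2}e^{c|x|}$, this yields $\bbE V_{a_N}\le 8/(c^2e^2)$ for all $N\ge3$, which closes the Fatou argument at once.

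For the second assertion, $\Law(\fP_{a(\chi)})\in\cP_2(\cP_2(\bbR))$, you should also record two facts. First, $\chi\mapsto\fP_{a(\chi)}$ is Borel, by Lemma~\ref{l:parameter-measurability} applied to a Borel $\cA$-valued version of $a(\chi)$. Second, $\bbW_2^2(\fP_{a(\chi)},\delta_0)=V_{a(\chi)}$.
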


\subsection{Related work}

Since the introduction of the SK model in \cite{sherrington1975solvable}, a central question has been to identify the limiting free energy density $\lim_{N\to\infty} \fr{1}{N} \bbE \log Z_{N,\beta}$.
The Parisi formula \cite{parisi1979infinite,parisi1983order} for this limit was proved (in both Ising and spherical cases, and for the more general mixed $p$-spin Hamiltonian) by Talagrand \cite{talagrand2006parisi,talagrand2006spherical} and Panchenko \cite{panchenko2013parisi}, following decades of progress in the probability and statistical physics communities \cite{parisi1979infinite,parisi1983order,mezard1987spin,ruelle1987mathematical,crisanti1992spherical,ghirlanda1998general,aizenman2003extended,guerra2002thermodynamic,guerra2003broken}.

A closely related line of work studies the distribution of fluctuations of the free energy $F_{N,\beta} = \log Z_{N,\beta}$ of the (spherical and Ising) SK model.
For the spherical SK model, these fluctuations have been characterized in the high and low temperature phases $\beta < 1$ and $\beta > 1$ \cite{baik2016fluctuations}, at criticality $\beta=1$ \cite{landon2022free}, and in a critical window around $\beta=1$ \cite{johnstone2024spin}.
For the Ising SK model at high temperature $\beta < 1$, the early work of \cite{aizenman1987some,comets1995sherrington} showed that $\Var(F_{N,\beta}) = O(1)$ and $F_{N,\beta}$ satisfies a Gaussian CLT.
At low temperature $\beta > 1$, identifying the scale of $\Var(F_{N,\beta})$ is a significant open problem, with the best upper bound $\Var(F_{N,\beta}) = O(N / \log N)$ due to Chatterjee \cite{chatterjee2009disorder}.
At $\beta = 1$, \cite{aspelmeier2008free} predicted
\beq\label{e:FE-var}
  \Var(F_{N,\beta=1}) = \fr16 \log N + O(1)\,.
\eeq
Following progressively tighter bounds on this variance from \cite{chatterjee2009disorder,talagrand2010mean,talagrand2011mean2,chen2019order,dey2026fluctuations,schertzer2026order}, the authors \cite{du2026fluctuations} proved this prediction, along with a Gaussian CLT for $F_{N,\beta=1}$; see also \cite{chen2026moderate} for a recent independent proof of \eqref{e:FE-var}.

The theory surrounding the Parisi formula also yields information about overlap distributions.
For generic mixed $p$-spin models, the minimizer of the Parisi functional is the weak limit of the annealed overlap distribution $\bbE \la \delta(R_{1,2}) \ra \in \cP([-1,1])$ \cite[Chapter 3.7]{panchenko2013sherrington}.
The genericity assumption is important for this general theorem, and the SK model is not generic in this sense.

Finer-grained questions concern the quenched overlap distribution $\la \delta(R_{1,2}) \ra$ or the scale on which the overlap fluctuates.
For the Ising SK model, the aforementioned works \cite{guerra2002central, talagrand2011mean2} show Gaussian fluctuations for $\beta = 1 - c_N N^{-1/3}$ where $c_N \rightarrow \infty$, but not if $c_N \rightarrow c \in (0,\infty)$.
For the spherical SK model with no external field at $\beta = 1 - cN^{-1/3 + \tau}$, where $c>0$, $\tau \in (0, 1/3)$ are constant, Nguyen and Sosoe \cite{nguyen2019central} showed the quenched overlap distribution $\la \delta((N(1-\beta^2))^{1/2} R_{1,2}) \ra^\sph$ converges in probability to a standard Gaussian.
At fixed $\beta > 1$, Landon and Sosoe \cite{landon2022fluctuations} showed that the quenched distribution of $R_{1,2}$ concentrates near deterministic values $\pm (1-\beta^{-1})$ and identified the $N^{-1/3}$-scale fluctuations of $\la R_{1,2}^2\ra^\sph$ and $\la |R_{1,2}| \ra^\sph$ in terms of the $\Airy_1$ point process.
Finally, recent work by the authors \cite{du2026fluctuations} showed that at criticality $\beta = 1$, $\bbE\la R_{1,2}^2\ra \asymp \bbE\la R_{1,2}^2\ra^\sph \asymp N^{-2/3}$, but does not identify the constant factor characterized in Corollary~\ref{c:main}.

\subsection{Proof ideas: overlap distribution of spherical model}
\label{ss:intro-spherical}

We first explain Theorem~\ref{t:main}\ref{i:main-sphere}, which describes the overlap distribution of the spherical SK model.
Due to the rotational invariance of the spherical SK model, this result follows from GOE spectral edge considerations, and is related to a random matrix interpretation of the spherical SK model developed in \cite{baik2016fluctuations,baik2018ferromagnetic,nguyen2019central,landon2022fluctuations,landon2022free,johnstone2024spin}.

In this model, we may assume without loss that $\bW = \diag(\lambda_1,\ldots,\lambda_N)$ is diagonal, with eigenvalues $\lambda_1 \ge \cdots \ge \lambda_N$.
Let $\gamma = \gamma(\bW) > \lambda_1$ be the unique solution to
\beq\label{e:gamma-def}
  \sum_{i=1}^N \fr{1}{\gamma - \lambda_i} = N\,.
\eeq
Then, the anisotropic Gaussian vector $\bxi \sim \cN(0, (\gamma \bI - \bW)^{-1})$ satisfies
\[
  \bbE \Big[\|\bxi\|^2\Big] = \Tr\Big[(\gamma \bI - \bW)^{-1}\Big]
  = N\,.
\]
Since $\bxi$ has density proportional to $\exp(-\fr12 ((\gamma \bI - \bW) \bx, \bx))$ over $\bx \in \bbR^N$, $\mu^\sph_{N,\beta=1}$ is precisely the law of $\bxi$ conditioned on $\|\bxi\|^2 = N$.

We next explain how the limit distribution in Theorem~\ref{t:main} arises from this description.
Write
\balnn
  \chi_{N,k} &= N^{2/3} (2 - \lambda_k)\,, &
  d_{N,k} &= \chi_{N,k} - \chi_{N,1}=N^{2/3}(\lambda_1-\lambda_k)\,, \\
  \Delta_N &= N^{2/3} (\gamma - \lambda_1)\,, &
  a_{N,k} &= (\Delta_N + d_{N,k})^{-1} = N^{-2/3} (\gamma - \lambda_k)^{-1}\,.
\ealnn
Recall from \eqref{e:airy-convergence} that the point process of the $\chi_{N,k}$ converges in distribution to a sample $\chi$ from the reflected $\Airy_1$ point process.
The equation \eqref{e:gamma-def} can be rewritten as
\[
  \fr{1}{\Delta_N} + \sum_{k=2}^N \fr{1}{\Delta_N + d_{N,k}} = N^{1/3}\,.
\]
We will see in \S\ref{ss:augmented-edge-convergence} that this converges in a suitable sense to the equation \eqref{e:Psi} defining $\Delta(\chi)$.
Consequently, $\Delta_N$ and $(a_{N,k})_{1\le k\le N}$ will also converge to $\Delta(\chi)$ and $a(\chi)$.

Let $\bg = (g_k)_{1\le k\le N}$ be a sequence of independent standard Gaussians.
We can write
\[
  \bxi = \bxi(\bg) = \sum_{k=1}^N \fr{g_k}{\sqrt{\gamma - \lambda_k}} \be_k
  = N^{1/3} \sum_{k=1}^N a_{N,k}^{1/2} g_k \be_k\,,
\]
for $\be_k$ the $k$-th basis vector in $\bbR^N$.
In light of \eqref{e:gamma-def}, $\|\bxi\|^2 = N$ is equivalent to
\beq\label{e:bxi-condition}
  \sum_{k=1}^N a_{N,k} (g_k^2-1) = 0\,.
\eeq
For $j=1,2$, let $\bg^{(j)} = (g^{(j)}_k)_{1\le k\le N}$ be independent samples of the conditional law of $\bg$ given \eqref{e:bxi-condition}.
Then for $\bx^1,\bx^2$ independent samples from $\mu^\sph_{N,\beta=1}$ conditional on $\bW$, we have
\[
  N^{1/3} R(\bx^1,\bx^2)
  \stackrel{d}{=} N^{-2/3} \Big(\bxi(\bg^{(1)}), \bxi(\bg^{(2)})\Big)
  = \sum_{k=1}^N 
  a_{N,k} g^{(1)}_k g^{(2)}_k\,.
\]
This is the finite-$N$ version of the limiting random variable $Q_a$ from Definition~\ref{d:limit-distribution}.
To prove Theorem~\ref{t:main}\ref{i:main-sphere}, we will show that the above objects are continuous in the appropriate topologies so that they pass to their limits as $N\to\infty$.

\subsection{Proof ideas: sphere-to-cube comparison}

While the spherical SK model's overlap distribution has a natural random matrix description due to its rotational invariance, no such description is a priori available for the Ising SK model.
To identify the latter model's overlap distribution, we show that the two critically rescaled overlap distributions coincide in the limit.
\begin{thm}\label{t:sphere-to-cube}
  We have
  \[
    \lim_{N\to\infty} \cW_2\Big(
      \Law(\la \delta (N^{1/3} R_{1,2})\ra),
      \Law(\la \delta (N^{1/3} R_{1,2})\ra^\sph)
    \Big) = 0\,.
  \]
\end{thm}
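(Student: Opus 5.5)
We plan to compare the two quenched overlap laws through an interpolation between the Ising Gibbs measure and the spherical one, run in the critical window. The tool is the sphere-to-cube comparison principle of \cite{du2026fluctuations}: an Ising configuration $\bx\in\Sigma_N$ should behave, at scale $N^{1/3}$, like a spherical configuration. The heuristic is that the high-temperature part of the Hessian (the bulk of the GOE spectrum) averages out under both measures identically, while only the top $O(1)$-many edge directions carry the $N^{-1/3}$-scale overlap.

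Concretely, fix $K$ large. Split $\bW = \bW_{\le K} + \bW_{>K}$, where $\bW_{\le K}=\sum_{k\le K}\lambda_k \bu_k\bu_k^\top$ is the projection onto the top $K$ eigenvectors. We would then proceed in three steps.

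First, for both the Ising and the spherical measure, we show that $N^{1/3}R_{1,2}$ is well approximated in $L^2(\la\cdot\ra)$, uniformly in $N$, by a statistic built from the projections $m_k = N^{-1/3}(\bx,\bu_k)$ for $k\le K$ together with a Gaussian remainder. The error should vanish as $K\to\infty$. This uses the bound $\bbE\la R_{1,2}^2\ra\asymp N^{-2/3}$ from \cite{du2026fluctuations}, together with the fact that $a_{N,k}\approx t_k^{-1}$, so that the tail $\sum_{k>K}a_{N,k}^2$ is small.

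Second, we condition on the top-$K$ projection $\bm=(m_k)_{k\le K}$. Under each measure, the joint quenched law of $\bm$ should be a tilt $\exp\big(\tfrac12\sum_{k\le K} N^{2/3}(\lambda_k-\gamma')m_k^2\big)$ of the law of $\bm$ under the high-temperature reference measure given by $\bW_{>K}$. That reference measure sits at effective inverse temperature strictly inside the replica-symmetric regime after removal of the edge, so Gaussian limit theorems of Guerra--Toninelli/Talagrand type \cite{guerra2002central,talagrand2011mean2} apply. Under the cube and under the sphere, the vector $\bm$ together with the Radon--Nikodym normalization should then both converge to the same Gaussian-with-norm-constraint picture. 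This is where the quantity $\gamma$ and the constraint \eqref{e:bxi-condition} reappear on the cube side: the self-overlap $R_{1,1}=1$ is automatic on $\Sigma_N$ and is the only constraint that matters.

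Third, we assemble these into a coupling of the quenched laws for each $\bW$. The $\cW_2$ bound follows from the $L^2$ approximation in the first step and uniform integrability, which comes from the second-moment bound $N^{2/3}\bbE\la R_{1,2}^2\ra=O(1)$ of \cite{du2026fluctuations}.

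The main obstacle is the second step on the Ising side. We must show that the quenched law of the $K$ edge projections under the cube reference measure is asymptotically Gaussian with the correct covariance, uniformly enough to survive the exponential tilt. The tilt is unbounded at scale $N^{2/3}$ times $m_k^2$, so weak convergence alone is insufficient. We would first try to prove a quenched local CLT for $\bm$ with sub-Gaussian tail control, via cavity/Stein arguments on the reference model, reusing the comparison estimates from \cite{du2026fluctuations} where this was done for the partition function rather than for the overlap.
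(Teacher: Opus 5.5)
Your proposal has a genuine gap. The two steps that carry all the content on the Ising side are asserted rather than proved, and part of their justification is incorrect.

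\emph{Step~1.} The coefficients $a_{N,k}$ describe only the spherical Gibbs measure. For $\la\cdot\ra$, nothing you cite controls how the quenched covariance of $\bx$ is spread over the eigendirections of $\bW$. The total bound $N^{2/3}\bbE\la R_{1,2}^2\ra=O(1)$ does not imply that $N^{-2/3}\sum_{k>K}(\bx^1,\bu_k)(\bx^2,\bu_k)$ is small in $L^2(\bbE\la\cdot\ra)$ uniformly in $N$ as $K\to\infty$.

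\emph{Step~2.} The claim that the deflated model lies strictly inside the replica-symmetric regime is false for fixed $K$. Since $2-\lambda_{K+1}=O_K(N^{-2/3})$, the deflated model sits heuristically at distance $O_K(N^{-1/3})$ from criticality on the $\beta$ scale. This is the analogue of the window $\beta=1-cN^{-1/3}$ in which Talagrand shows the Gaussian limit fails. Moreover, the Guerra--Toninelli and Talagrand theorems are proved for GOE disorder, not for $\bW_{>K}$, whose kernel is exactly the span of the directions you condition on.

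\emph{The main obstacle.} What you call the main obstacle is the whole problem. The law of $m_k=N^{-1/3}(\bx,\bu_k)$ is set by a residual quadratic coefficient of size $N^{-2/3}$. That residual is what remains after the Gibbs weight $\frac12\lambda_k(\bx,\bu_k)^2$, with $\lambda_k\approx 2$, is balanced against an effective confinement of the same order (the multiplier $\gamma\approx 2$ in the spherical picture). So you would need the quenched law of $(\bx,\bu_k)$ under the reference Ising measure with $o(1)$ accuracy in log-density out to $|(\bx,\bu_k)|\asymp N^{1/3}$. That is a sharp moderate-deviation local limit theorem, quenched in $\bW$, for spins projected onto eigenvectors of the very matrix defining the measure. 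Your assertion that $R_{1,1}=1$ is the only cube constraint that matters is this same statement, and no cavity or Stein argument you point to delivers it.

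\emph{Step~3.} A second-moment bound gives only boundedness, not the uniform integrability of $\int x^2\,\mu_N(\de x)$ needed for $\cW_2$. That requires higher moments, such as those from Proposition~\ref{p:exponential-moment-bound}.

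The missing idea is that no structural description of the Ising Gibbs measure is needed. The paper compares the quenched Laplace transforms $L_N(t)=\la e^{tN^{1/3}R_{1,2}}\ra$ and $L_N^\sph(t)$ for small $|t|$.
\begin{enumerate}
  \item The exact reweighted identities \eqref{e:reweighted-mt} write $\bbE[X_N^2L_N(t)]$ and $\bbE[L_N^\sph(t)]$ as integrals of the same function $J_N(q)e^{tN^{1/3}q}$ against the binomial and spherical overlap laws. These laws nearly coincide on $|q|\lesssim N^{-1/3}$, and exponential moment bounds handle larger $|q|$ (Lemma~\ref{l:annealed-convergence}).
  \item The feature map $\Phi_{N,t}$ with kernel $e^{tN^{1/3}R(\bx,\by)}$ upgrades this annealed statement to a quenched one. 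Rotational invariance makes the cross term $\bbE[X_N(\bm_N,\bm_N^\sph)]$ equal to $\bbE[L_N^\sph(t)]$. Hence $\bbE\|X_N\bm_N-\bm_N^\sph\|^2$ is exactly the annealed difference (Lemma~\ref{l:hilbert-barycenter-convergence}).
  \item Together with $X_N\to1$, this gives $L_N(t)-L_N^\sph(t)\to0$ in probability.
  \item Tightness from the exponential moments, plus uniqueness of Laplace transforms near $0$, shows that every subsequential limit of the common-$\bW$ coupling is supported on the diagonal. This yields the $\cW_2$ statement.
\end{enumerate}
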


Theorem~\ref{t:sphere-to-cube} builds on a similar transfer principle for the partition function introduced in recent work by the authors.
Let $Z_N = Z_{N,\beta=1}$, $Z^\sph_N = Z^\sph_{N,\beta=1}$, and $X_N = Z_N / Z^\sph_N$.
Then, \cite[Theorem 1.6]{du2026fluctuations} shows
\beq\label{e:overview-sphere-to-cube}
  \bbE[(X_N - 1)^2] = O(N^{-1/3})\,.
\eeq
This is used in \cite{du2026fluctuations} to infer the critical Ising SK model's free energy CLT from the analogous CLT for the spherical SK model \cite{landon2022free, johnstone2024spin}.\footnote{Ideas in the proof of \eqref{e:overview-sphere-to-cube} are also used in \cite{du2026fluctuations} to prove the variance asymptotic \eqref{e:FE-var}. The latter proof is not a transfer from the spherical model, and involves showing new asymptotically sharp bounds on the SK model's annealed overlap moments.}
We next outline the proof of \eqref{e:overview-sphere-to-cube} and explain how these ideas extend to prove Theorem~\ref{t:sphere-to-cube}.
For $\bx,\by \in S_N$ with $R(\bx,\by) = q \in [-1,1]$, define
\[
  J_N(q) = \bbE\lt[\fr{e^{H_N(\bx) + H_N(\by)}}{(Z_N^\sph)^2}\rt]\,.
\]
By rotational invariance, this depends on $\bx,\by$ only through $q$, so the notation $J_N(q)$ is justified.
Let $\bbE_q$ and $\bbE^\sph_q$ denote expectation with respect to $q = R(\bx,\by)$ for, respectively, $\bx,\by \sim \unif(\Sigma_N)$ and $\bx,\by \sim \unif(S_N)$.
A direct calculation \cite[Lemma 2.1]{du2026fluctuations} shows that for any test function $g : [-1,1] \rightarrow \bbR$ for which the moments below are defined,
\baln\label{e:reweighted-mt}
  \bbE \lt[X_N^2 \la g(R_{1,2}) \ra\rt] &= \bbE_q[J_N(q)g(q)]\,, & 
  \bbE \lt[\la g(R_{1,2}) \ra^\sph\rt] &= \bbE^\sph_q[J_N(q)g(q)]\,.
\ealn
Since $Z^\sph_N$ is the average of $Z_N$ over orthogonal rotations of $\bW$, we also have $\bbE[X_N] = 1$.
Hence,
\[
  \bbE[(X_N-1)^2] = \bbE[X_N^2] - 1 = \bbE_q[J_N(q)] - \bbE^\sph_q[J_N(q)]\,.
\]
Thus \eqref{e:overview-sphere-to-cube} amounts to comparing the expectations of $J_N(q)$ under $\bbE_q$ and $\bbE^\sph_q$.
This is achieved using two facts.
First, the main contribution to both expectations comes from the scale $|q| \lesssim N^{-1/3}$.
Second, the distributions of $q$ under $\bbE_q$ and $\bbE^\sph_q$ nearly agree for $q$ in this range, in the following sense.
Under $\bbE^\sph_q$, $q$ has density
\[
  \rho_N(q) \,\,\propto\,\, (1-q^2)^{(N-3)/2} = \exp\lt(-\fr{N}{2} q^2 + O(Nq^4 + q^2)\rt)\,, 
\]
and under $\bbE_q$, $q$ is sampled from the discrete distribution on $\{-1,-1+\fr2N,\ldots,1\}$ with probability mass
\[
  p_N(q) = 2^{-N} \binom{N}{\fr{1+q}{2}N} \,\,\propto\,\, \exp\lt(-\fr{N}{2} q^2 + O(Nq^4 + q^2)\rt)\,.
\]
The error terms $O(Nq^4 + q^2)$ are $O(N^{-1/3})$ for $|q| \lesssim N^{-1/3}$.
After further approximating the discrete sum over $q$ in $\bbE_q$ with a continuous integral, \cite{du2026fluctuations} obtains \eqref{e:overview-sphere-to-cube}.

The proof of Theorem~\ref{t:sphere-to-cube} extends the above ideas, and is based on showing the quenched distributions' Laplace transforms coincide.
We will show that for a universal $t_0 > 0$, and all $|t| < t_0$,
\beq\label{e:laplace-coincide}
  \bbE \lt|
    X_N^2 \la \exp(tN^{1/3} R_{1,2}) \ra 
    - \la \exp(tN^{1/3} R_{1,2}) \ra^\sph
  \rt| = o_N(1)\,.
\eeq
Recall that \eqref{e:overview-sphere-to-cube} implies that $X_N \rightarrow 1$ in probability.
Then, \eqref{e:laplace-coincide} implies that the distance between $\la \delta(N^{1/3} R_{1,2}) \ra$ and $\la \delta(N^{1/3} R_{1,2}) \ra^\sph$ in any metric metrizing the weak topology tends to zero in probability.
Furthermore, \cite[Theorem 1.4(a) and Corollary 1.7(b)]{du2026fluctuations} show that $N^{1/3} R_{1,2}$ has a bounded exponential moment under both $\bbE \la \cdot \ra$ and $\bbE \la \cdot \ra^\sph$.
This suffices to upgrade the above convergence to $\cW_2$.

We finally explain the proof of \eqref{e:laplace-coincide}.
A comparison argument similar to above shows the weaker bound
\beq\label{e:laplace-coincide-weak}
  \lt|
    \bbE \lt[X_N^2 \la \exp(tN^{1/3} R_{1,2}) \ra\rt]
    - \bbE \lt[\la \exp(tN^{1/3} R_{1,2}) \ra^\sph\rt]
  \rt| = o_N(1)
\eeq
for all $|t| < t_0$.
Indeed, by \eqref{e:reweighted-mt},
\[
  \bbE \lt[X_N^2 \la \exp(tN^{1/3} R_{1,2}) \ra\rt]
  - \bbE \lt[\la \exp(tN^{1/3} R_{1,2}) \ra^\sph\rt]
  = \bbE_q[J_N(q) e^{tN^{1/3}q}]
  - \bbE^\sph_q[J_N(q) e^{tN^{1/3}q}]\,.
\]
For $|t| < t_0$, the dominant contributions to these expectations still come from $|q|\lesssim N^{-1/3}$, and the comparison method described above bounds this difference.

Finally, we upgrade \eqref{e:laplace-coincide-weak} to \eqref{e:laplace-coincide} using an idea due to GPT-5.6 Pro: for any $t\in [0, t_0)$, we may define a Hilbert space $\cH_N$ and kernel $\Phi_{N,t}: \bbR^N \rightarrow \cH_N$ with kernel inner product
\[
  (\Phi_{N,t}(\bx), \Phi_{N,t}(\by))_{\cH_N}
  = \exp\lt(tN^{1/3} R(\bx,\by)\rt)\,.
\]
In particular,
\balnn
  \|\la \Phi_{N,t}(\bx) \ra\|^2_{\cH_N} &= \la \exp(tN^{1/3} R_{1,2}) \ra\,, & 
  \|\la \Phi_{N,t}(\bx) \ra^\sph\|^2_{\cH_N} &= \la \exp(tN^{1/3} R_{1,2}) \ra^\sph\,.
\ealnn
Then a direct calculation shows that
\[
  \bbE \lt[\lt\| X_N \la \Phi_{N,t}(\bx) \ra - \la \Phi_{N,t}(\bx) \ra^\sph \rt\|^2_{\cH_N}\rt]
  = \bbE \lt[X_N^2 \la \exp(tN^{1/3} R_{1,2}) \ra\rt]
  - \bbE \lt[\la \exp(tN^{1/3} R_{1,2}) \ra^\sph\rt]\,,
\]
which is $o_N(1)$ by \eqref{e:laplace-coincide-weak}.
Using the estimate, for any square-integrable random vectors $\bu,\bv$,
\[
  \bbE \lt|\|\bu\|^2 - \|\bv\|^2\rt| 
  \le \bbE \lt[\|\bu - \bv\|^2\rt]^{1/2} 
  \bbE \lt[\|\bu + \bv\|^2\rt]^{1/2} 
  \le \bbE \lt[\|\bu - \bv\|^2\rt]^{1/2} 
  \cdot 2^{1/2} \bbE \lt[\|\bu\|^2 + \|\bv\|^2\rt]^{1/2}\,,
\]
we deduce
\balnn
  &\bbE \lt[\lt|
    X_N^2 \la \exp(tN^{1/3} R_{1,2}) \ra 
    - \la \exp(tN^{1/3} R_{1,2}) \ra^\sph
  \rt|\rt]
  = \bbE \lt|X_N^2 \|\la \Phi_{N,t}(\bx) \ra\|^2_{\cH_N} - \|\la \Phi_{N,t}(\bx) \ra^\sph\|^2_{\cH_N}\rt| \\
  &\le \bbE \lt[\lt\| X_N \la \Phi_{N,t}(\bx) \ra - \la \Phi_{N,t}(\bx) \ra^\sph \rt\|^2_{\cH_N}\rt]^{1/2}
  \cdot 2^{1/2} \bbE \lt[X_N^2 \|\la \Phi_{N,t}(\bx) \ra\|^2_{\cH_N} + \|\la \Phi_{N,t}(\bx) \ra^\sph\|^2_{\cH_N}\rt]^{1/2} \\
  &= o_N(1)
\ealnn
for all $t\in [0, t_0)$.
Reflection symmetry of the quenched overlap distributions across $0$ proves \eqref{e:laplace-coincide}.

\subsection{On the critical window}

Consider the critical window $\beta_N = 1 + bN^{-1/3}$, where $b\in \bbR$ is independent of $N$.
A heuristic argument similar to that of \S\ref{ss:intro-spherical} suggests that
\beq\label{e:critical-window}
\Law(\la \delta(N^{1/3} R_{1,2}) \ra^\sph_{\beta_N})
\stackrel{\cW_2}{\longrightarrow}
\Law(\fP_{a^b(\chi)})\,,
\eeq
where $a^b(\chi)$ is defined as follows.
Let $\Psi(\cdot;\chi)$ be defined in \eqref{e:Psi} and $\Delta^b(\chi) > 0$ be the (unique) solution to $\Psi(\Delta^b(\chi);\chi) = b$.
Then define $a^b(\chi) = (a^b_k)_{k\ge 1}$ by $a^b_k = (\Delta^b(\chi) + d_k)^{-1}$.

As $\beta$ ranges over this critical window, $\fP_{a^b(\chi)}$ transitions from Gaussian as $b\downarrow-\infty$ to bimodal as $b\uparrow+\infty$.
Indeed, it is not hard to check that for almost all $\chi$,
\balnn
\Law\lt((2|b|)^{1/2} x: x \sim \fP_{a^b(\chi)}\rt) &\stackrel{b\downarrow-\infty}{\longrightarrow} \cN(0,1)\,, &
\Law\lt(b^{-1} x: x \sim \fP_{a^b(\chi)}\rt) &\stackrel{b\uparrow+\infty}{\longrightarrow} \fr{\delta(-1)+\delta(1)}{2}\,.
\ealnn
Our proof of Theorem~\ref{t:main}\ref{i:main-sphere} can be adapted to prove that the spherical model satisfies \eqref{e:critical-window} throughout this critical window.
For reasons outlined in \cite[Remark 2.3]{du2026fluctuations}, we also expect the sphere-to-cube transfer principle from Theorem~\ref{t:sphere-to-cube} to remain true in this range of $\beta_N$, so that the analogous limit with \(\la \cdot \ra_{\beta_N}\) holds for the Ising SK model's overlap distribution as well.
However, as described therein, our proof of Theorem~\ref{t:sphere-to-cube} will not work for $b > 0$.
For simplicity, we focus on $\beta=1$ in this paper.

Finally, note that this critical window is different from the critical window $\beta_N = 1 + b N^{-1/3} \sqrt{\log N}$ for the free energy fluctuations, on which \cite{johnstone2024spin} showed that the spherical SK model's free energy fluctuations transition from Gaussian to Tracy--Widom.
We expect the Ising SK model's free energy fluctuations to have the same distributional limit in this critical window, but this has not been proved; see \cite[Remark 2.3]{du2026fluctuations}.

\subsection{Organization}

The rest of this paper is structured as follows.
\begin{itemize}
    \item In \S\ref{s:limit-distribution} we prove Propositions~\ref{p:limit-distribution} and \ref{p:limit-distribution-square-integrable-deterministic} and formally construct the conditioning of $g$ in Definition~\ref{d:limit-distribution}.
    This constructs the limiting objects $\fP_{a(\chi)}$, $V_{a(\chi)}$ in Theorem~\ref{t:main} and Corollary~\ref{c:main} (though it does not show $\Law(\fP_{a(\chi)}) \in \cP_2(\cP_2(\bbR))$, which is a consequence of Proposition~\ref{p:limit-distribution-square-integrable-random}).
    \item In \S\ref{s:sphere-to-cube} we prove Theorem~\ref{t:sphere-to-cube}, which shows that the critically rescaled spherical and Ising overlap distributions coincide in the limit.
    \item In \S\ref{s:overlap} we formalize the limiting argument outlined in \S\ref{ss:intro-spherical}. This leads to proofs of Proposition~\ref{p:limit-distribution-square-integrable-random} and Theorem~\ref{t:main}\ref{i:main-sphere}.
    We finally infer Theorem~\ref{t:main}\ref{i:main-cube} using the sphere-to-cube comparison from Theorem~\ref{t:sphere-to-cube}.
\end{itemize}

\subsection*{Acknowledgments}

We are extremely grateful to Jason Prodromidis for several insightful and motivating conversations during the early stages of this project.
We also thank Wei-Kuo Chen for bringing this problem to our attention and Fu-Hsuan Ho for an inspiring discussion.
HD was partially supported by an NSF-Simons research collaboration grant (award number 2031883).
BH was supported by a Stanford Science Fellowship and an NSF Mathematical Sciences Postdoctoral Fellowship.

\subsection*{The role of AI in this work}

This paper is a companion to \cite{du2026fluctuations}, and is intended to explore further consequences of ideas developed therein with AI assistance.
Most of the formal arguments in this paper were initially generated by GPT-5.6 Pro.

The principal human inputs were: the identification of the limiting objects $\fP_{a(\chi)}$ and $V_{a(\chi)}$ through the heuristic argument in \S\ref{ss:intro-spherical}; the formulation of Theorem~\ref{t:sphere-to-cube} providing the sphere-to-cube comparison; and the overall proof strategy of first establishing the limiting overlap distribution for the spherical model and then transferring the result to the Ising model.
The authors take full responsibility for the correctness of the paper.

\section{Formal construction of the limit distribution}
\label{s:limit-distribution}

In this section we construct the limiting objects $\fP_{a(\chi)}$ and $V_{a(\chi)}$ in Theorem~\ref{t:main} and Corollary~\ref{c:main}.
The section is organized as follows.
\begin{itemize}
  \item In \S\ref{ss:limit-airy-saddle}, we prove Proposition~\ref{p:limit-distribution}, which provides almost-sure asymptotics of the Airy points $\chi_k$ and shows $\Delta(\chi)$ is well-defined as the zero of $\Psi(\cdot;\chi)$ from \eqref{e:Psi}.
  \item In \S\ref{ss:canonical-pin}, we formally construct the conditioning $\nu_a$ of the infinite-dimensional Gaussian sequence $g$ in Definition~\ref{d:limit-distribution}.
  \item In \S\ref{ss:limit-square-integrability}, we prove Proposition~\ref{p:limit-distribution-square-integrable-deterministic}, which gives square integrability of $\fP_a$.
  \item In \S\ref{ss:airy-regularity}, we establish regularity properties of the conditioned measure $\nu_a$ defined in \S\ref{ss:canonical-pin}.
  These will be used in \S\ref{s:overlap} to compare $\nu_a$ with the finite-dimensional conditioned measures introduced in \S\ref{ss:intro-spherical}.
\end{itemize}

\subsection{The limiting Airy saddle}
\label{ss:limit-airy-saddle}

In this subsection we prove Proposition~\ref{p:limit-distribution}.

\begin{proof}[Proof of Proposition~\ref{p:limit-distribution}\ref{i:limit-distribution-chi}]
  The estimates \cite[Equations (6.47)--(6.48)]{landon2022fluctuations} imply that for some universal constants $C_1, C > 0$, all integers $k\ge 1$, and all real 
  \beq\label{e:LS22-s-constraint}
    s \in (Ck^{-1/3}, k^{2/3} - C_1)\,,
  \eeq
  we have
  \beq\label{e:airy-rigidity-input}
    \bbP\lt(|\chi_k-t_k|>s\rt)
    \le
    \fr{C(\log k+\log(1+s))}{(k^{1/3}s-C)^2}\,.
  \eeq
  Let $(\eps_k)_{k\ge 1}$ be a sequence tending to $0$ sufficiently slowly with $k$.
  Since $t_k \asymp k^{2/3}$, there exists $k_0$ such that $s = \eps_k t_k$ satisfies \eqref{e:LS22-s-constraint} for all $k\ge k_0$.
  Then, by \eqref{e:airy-rigidity-input},
  \[
    \sum_{k\ge k_0} \bbP\lt(|\chi_k-t_k|>\eps_k t_k\rt)
    \le C\sum_{k\ge k_0} \fr{\log k+\log(1+\eps_k t_k)}{(k^{1/3}\eps_k t_k-C)^2}\,.
  \]
  For $\eps_k$ tending to $0$ sufficiently slowly, this upper bound is finite.
  The Borel--Cantelli lemma implies that $|\chi_k-t_k|\le \eps_k t_k$ for all sufficiently large $k$ almost surely.
  The conclusion follows.
\end{proof}

\begin{proof}[Proof of Proposition~\ref{p:limit-distribution}\ref{i:limit-distribution-Psi}]
  Fix a realization of $\chi$ on which the conclusions of Propositions~\ref{p:Xi} and \ref{p:limit-distribution}\ref{i:limit-distribution-chi} hold.
  Since $\chi_1$ is finite, the latter conclusion implies 
  \beq\label{e:dk-asymptotic}
    \lim_{k\to+\infty} d_k/t_k = 1\,.
  \eeq
  This implies that as $x\to+\infty$,
  \[
    N_d(x)\equiv \#\{k\ge 2:d_k\le x\}
    = \fr{2}{3\pi}x^{3/2}(1+o_x(1))\,,
  \]
  where $o_x(1)$ denotes a ($\chi$-dependent) term tending to $0$ in this limit.
  Thus there exist $x_0, c$ (depending on $\chi$) such that for all $x \ge x_0$,
  \[
    N_d(x) \ge cx^{3/2}\,.
  \]
  The asymptotic \eqref{e:dk-asymptotic} also implies $\sum_{k\ge 2} d_k^{-2} < \infty$.
  On any compact interval
  $[L_-,L_+]\subset(0,+\infty)$,
  \balnn
    \lt|\fr1{x+d_k}-\fr1{d_k}\rt|
    &=\fr{x}{d_k(x+d_k)}
    \le\fr{L_+}{d_k^2}\,, &
    \fr1{(x+d_k)^2}&\le\fr1{d_k^2}\,.
  \ealnn
  Thus the series defining $\Psi$ in \eqref{e:Psi}, and the series obtained by differentiating it, converge uniformly on $[L_-,L_+]$.
  It follows that $\Psi$ is continuously differentiable on $(0,+\infty)$ and
  \beq\label{e:Psi-derivative}
    \Psi'(x;\chi)
    = -\fr1{x^2}-\sum_{k\ge2}\fr1{(x+d_k)^2}
    < 0.
  \eeq
  This shows that $\Psi$ is strictly decreasing.
  Moreover,
  \[
    \lt|
      \sum_{k\ge2}\lt(
        \fr{1}{x+d_k} - \fr{1}{d_k}
      \rt)
    \rt|
    \le x\sum_{k\ge2} d_k^{-2}\,,
  \]
  where we recall that this sum is finite.
  Thus there exists $C$ depending on $\chi$ such that
  \[
    \Psi(x;\chi) \ge \fr1x + \Xi(\chi) - Cx\,.
  \]
  It follows that $\lim_{x\downarrow 0} \Psi(x;\chi) = +\infty$.
  It remains to consider $x\to+\infty$.
  Note that for $x\ge x_0$ defined above,
  \[
    |\Psi'(x;\chi)|
    \ge \sum_{k\ge2}\fr1{(x+d_k)^2}
    \ge \sum_{d_k\le x}\fr1{(x+d_k)^2}
    \ge \fr{N_d(x)}{4x^2}
    \ge \fr{c}{4\sqrt{x}}\,.
  \]
  Integrating \eqref{e:Psi-derivative} on $[x_0,x]$ gives
  \[
    \Psi(x;\chi)
    \le \Psi(x_0;\chi) - \fr{c}{4}\int_{x_0}^x s^{-1/2}\,\de s\,.
  \]
  This tends to $-\infty$ as $x\uparrow+\infty$, as the integral diverges.
\end{proof}

\subsection{Canonical conditioning at the constraint}
\label{ss:canonical-pin}

The goal of this subsection is to give the formal construction of $\nu_a$ required in Definition~\ref{d:limit-distribution}.
In Lemma~\ref{l:canonical-disintegration}, we define projectively consistent finite-dimensional conditional laws and use Kolmogorov extension to obtain an infinite-dimensional probability kernel $(\nu_{a,s} : s\in \bbR)$.
We then set $\nu_a = \nu_{a,0}$.

We also prove Lemma~\ref{l:constraint-fiber}, which shows that under $\nu_a$, $\sum_{k=1}^n a_k (g_k^2-1) \rightarrow 0$ almost surely and in $L^2$.
This lemma is not formally required in subsequent arguments, which use only the finite-dimensional marginals of $\nu_a$.
We include it to show that we can genuinely interpret $\nu_a$ as a conditioning of an infinite Gaussian sequence, as described in Definition~\ref{d:limit-distribution}.

In this subsection, we fix a decreasing positive sequence $a$ satisfying \eqref{e:a-asymptotic}, as in Definition~\ref{d:limit-distribution}.
This implies
\baln\label{e:a-basic-summability}
  \sum_{k\ge1}a_k^2&<\infty\,, & 
  \sum_{k\ge1}a_k&=\infty.
\ealn
Let $g_1,g_2,\ldots$ be a sequence of independent standard Gaussians, and define
\beq\label{e:Ta}
  T_a = \sum_{k\ge1}a_k(g_k^2-1).
\eeq
The series converges in $L^2$ and almost surely, because its summands are independent and centered and their variances sum to $2\sum_k a_k^2<\infty$.
Write
\balnn
  S_{a,n} &= \sum_{k\le n}a_k(g_k^2-1)\,, &
  T_{a,>n} &= \sum_{k>n}a_k(g_k^2-1)\,.
\ealnn
\begin{lem}\label{l:constraint-densities}
  For every $n\ge0$, the variables $T_a$ and $T_{a,>n}$ have probability densities $p_a$ and $p_{a,>n}$ on $\bbR$ that are bounded, continuous, and strictly positive.
\end{lem}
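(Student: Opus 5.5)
The approach is Fourier analysis via characteristic functions. For $T_{a,>n}$ with $n\ge 0$ (and $T_a = T_{a,>0}$), independence and the Gaussian identity $\bbE e^{i\theta(g^2-1)} = e^{-i\theta}(1-2i\theta)^{-1/2}$ give
\[
  \varphi_{>n}(\theta) = \bbE e^{i\theta T_{a,>n}} = \prod_{k>n} e^{-i\theta a_k}(1-2i\theta a_k)^{-1/2}\,,
\]
where the product converges because $\sum_k a_k^2<\infty$ and $\log(1-2iu)^{-1/2} = iu + O(u^2)$. The modulus is $|\varphi_{>n}(\theta)| = \prod_{k>n}(1+4\theta^2a_k^2)^{-1/4}$. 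If we can show $\varphi_{>n}\in L^1(\bbR)$, Fourier inversion gives a density $p_{a,>n}(x) = \fr{1}{2\pi}\int e^{-i\theta x}\varphi_{>n}(\theta)\,\de\theta$. This density is automatically bounded by $\|\varphi_{>n}\|_{L^1}/2\pi$, and it is continuous by dominated convergence.

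For integrability, it suffices to keep finitely many factors. Because infinitely many $a_k>0$, we may choose five indices $k_1,\dots,k_5>n$. Each remaining factor has modulus at most $1$, so
\[
  |\varphi_{>n}(\theta)| \le \prod_{j=1}^5(1+4\theta^2a_{k_j}^2)^{-1/4} \lesssim (1+|\theta|)^{-5/2}\,,
\]
which is integrable. Since $|\theta|^m|\varphi_{>n}|$ remains integrable for $m\le 1$ once more factors are used, this even gives $C^1$ regularity, but only continuity is needed.

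Strict positivity is the main obstacle, and I would handle it by a convolution argument. Fix $m>n$ and split $T_{a,>n} = U + V$, where $U = \sum_{k=n+1}^{m} a_k(g_k^2-1)$ and $V = T_{a,>m}$ are independent. Then $p_{a,>n} = p_U * \mu_V$, where $p_U$ is the density of $U$ and $\mu_V$ is the law of $V$. The variable $U$ is an affine image of a sum of independent scaled $\chi^2_1$ variables, so $p_U$ is the convolution of the densities $a_k^{-1}f(\cdot/a_k + 1)$, with $f$ the $\chi^2_1$ density. This is strictly positive on $(-\sum_{k=n+1}^m a_k,\infty)$, because convolutions of densities positive on half-lines are positive on the sum of those half-lines. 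Since $\sum a_k=\infty$, the left endpoint $-\sum_{k=n+1}^m a_k$ tends to $-\infty$ as $m\to\infty$.

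Given any $x\in\bbR$, choose $m$ with $\sum_{k=n+1}^m a_k > |x| + M$, where $M$ satisfies $\bbP(V\le M)>0$; such an $M$ exists trivially, since $V$ is a real random variable. Then
\[
  p_{a,>n}(x) = \int p_U(x-v)\,\mu_V(\de v) \ge \int_{v\le M} p_U(x-v)\,\mu_V(\de v) > 0\,,
\]
because $x - v \ge x - M > -\sum_{k=n+1}^m a_k$ on this region, so the integrand is strictly positive there. One subtlety is that $p_U$ may be unbounded near the left endpoint when $m-n\le 2$; to avoid this, take $m-n\ge3$, so that $p_U$ is continuous. The identification of the convolution with the continuous Fourier-inversion density then holds pointwise, since both expressions are continuous and agree almost everywhere. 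Applying this with $n=0$ handles $T_a$.
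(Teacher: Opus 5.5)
Your argument is correct and follows the paper's route. Fourier inversion of the product formula for the characteristic function gives a bounded continuous density, and positivity comes from convolving a finite head with an independent tail; the head's density is positive on a half-line whose left endpoint tends to $-\infty$ because $\sum_k a_k=\infty$. The only real difference is in the integrability step: you bound $|\varphi_{>n}(\theta)|\lesssim(1+|\theta|)^{-5/2}$ using finitely many factors (three would already suffice), whereas the paper proves the stronger decay \eqref{e:Phi-decay} using \eqref{e:a-asymptotic}. Your bound is enough for this lemma.

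There is one quantifier slip in the positivity step. Since $V=T_{a,>m}$ depends on $m$, you cannot choose $M$ from $V$ and then choose $m$ from $M$. Take $M=0$ instead: this works for every $m$, because $\bbE T_{a,>m}=0$ forces $\bbP(T_{a,>m}\le 0)>0$. The paper handles the same point with Chebyshev, getting $\bbP(|T_{a,>m}|<\eps)>0$ once $2\sum_{k>m}a_k^2<\eps^2$.
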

\noindent The proof of Lemma~\ref{l:constraint-densities} appears after the following lemma.
\begin{lem}\label{l:constraint-densities-positive}
  For any $n\ge 3$ and $c_1,\ldots,c_n>0$, 
  \[
    Y = \sum_{k=1}^n c_k g_k^2
  \]
  has a bounded continuous density which is positive on $(0,\infty)$.
\end{lem}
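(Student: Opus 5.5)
Write $Y=\sum_{k=1}^n c_k g_k^2$ with $c_k>0$ and $n\ge 3$. The plan is to go through characteristic functions: integrability of the characteristic function will give a bounded continuous density, and positivity will come from a separate convolution argument. Each summand $c_k g_k^2$ has the explicit Gamma$(1/2,\cdot)$ density $f_k(y)=(2\pi c_k y)^{-1/2}e^{-y/(2c_k)}\bbone_{y>0}$, and characteristic function $\varphi_k(\theta)=(1-2ic_k\theta)^{-1/2}$. Hence $|\varphi_k(\theta)|=(1+4c_k^2\theta^2)^{-1/4}$, and
\[
  |\varphi_Y(\theta)|=\prod_{k=1}^n(1+4c_k^2\theta^2)^{-1/4}\le C(1+|\theta|)^{-n/2}.
\]
For $n\ge 3$ this is integrable over $\bbR$, since $n/2>1$. (This is exactly where $n\ge3$ is used; for $n=2$ the bound is only $|\theta|^{-1}$.) Fourier inversion then gives a density $p_Y(y)=\fr{1}{2\pi}\int e^{-i\theta y}\varphi_Y(\theta)\,\de\theta$. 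It is bounded by $\fr1{2\pi}\|\varphi_Y\|_{L^1}$, and it is continuous by dominated convergence.

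For positivity, note that $p_Y$ is the $n$-fold convolution $f_1*\cdots*f_n$. Each $f_k$ is strictly positive on $(0,\infty)$ and vanishes on $(-\infty,0]$. Fix $y>0$. Then
\[
  p_Y(y)=\int f_1(y_1)\cdots f_{n-1}(y_{n-1})\,f_n\big(y-y_1-\cdots-y_{n-1}\big)\,\de y_1\cdots\de y_{n-1}.
\]
The integrand is strictly positive on the nonempty open set $\{y_j>0,\ \sum_{j<n}y_j<y\}$, so $p_Y(y)>0$. Since both the convolution and the Fourier-inversion formula represent the same density, and the inversion version is continuous, the convolution integral agrees with it everywhere on $(0,\infty)$.

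The main point needing care is reconciling the two representations of the density, i.e.\ that the continuous inversion version coincides pointwise with the convolution, so that strict positivity holds at every point and not merely almost everywhere. I would settle this by first checking that the convolution is itself continuous on $(0,\infty)$. This is done by writing it as $f_1*(f_2*\cdots*f_n)$, where the inner convolution is bounded and continuous (for $n-1\ge 2$, $f_2*f_3$ is already bounded, being an explicit exponential-type kernel), and $f_1\in L^1$. Two continuous densities that agree almost everywhere agree everywhere. Everything else is routine.
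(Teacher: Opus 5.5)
Your proof is correct, but it takes a different route from the paper. The paper never uses Fourier analysis here. It substitutes $y_k = ys_k$ in the convolution to get the simplex representation
\[
q(y)=\frac{y^{n/2-1}}{(2\pi)^{n/2}\prod_k\sqrt{c_k}}\int_{\Delta_{n-1}}\Big(\prod_{k=1}^n s_k^{-1/2}\Big)\exp\Big(-\frac y2\sum_{k=1}^n\frac{s_k}{c_k}\Big)\,\de s_1\cdots\de s_{n-1}.
\]
Finiteness of the Dirichlet integral then gives everything from this one pointwise formula. It gives finiteness and strict positivity, continuity on $(0,\infty)$ by dominated convergence, and the envelope $q(y)\le Cy^{n/2-1}e^{-y/(2c_{\max})}$. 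Here $n\ge3$ enters through $y^{n/2-1}\to0$ as $y\downarrow0$, which yields boundedness. You instead get boundedness and continuity from the decay $|\varphi_Y(\theta)|\lesssim|\theta|^{-n/2}$, with $n\ge3$ entering as integrability of the characteristic function. This is the same mechanism the paper uses later for $T_a$ (Lemma~\ref{l:constraint-densities}) and for $T_{c,>K}$ (Lemma~\ref{l:finite-pin-block-marginals}). It also gives a sup bound, $\|\varphi_Y\|_{L^1}/(2\pi)$, that does not degenerate when some $c_k$ are small, unlike the paper's prefactor $1/\prod_k\sqrt{c_k}$. The cost is that positivity must be proved separately for the convolution version, which forces the a.e.-versus-everywhere reconciliation you describe. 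The paper's single formula avoids that step and also gives the behavior near $0$ and $\infty$.

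One small correction to your reconciliation step. For $n=3$ the inner convolution $f_2*f_3$ is bounded but not continuous: it jumps at $0$ from $0$ to $1/(2\sqrt{c_2c_3})$. It is also a pure exponential only when $c_2=c_3$. What you actually need is boundedness, and that holds. Dropping the exponential factors and using $\int_0^y s^{-1/2}(y-s)^{-1/2}\,\de s=\pi$ gives $\|f_2*f_3\|_\infty\le 1/(2\sqrt{c_2c_3})$. Further convolution with probability densities preserves this sup bound. The convolution of $f_1\in L^1$ with a bounded function is uniformly continuous on $\bbR$, by $L^1$-continuity of translations. With this, the two continuous versions agree everywhere and your argument goes through.
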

\begin{proof}
  Write $Y_k = c_k g_k^2$.
  Then $Y_k$ has density
  \[
    q_k(y) = \fr{\bbone\{y>0\} e^{-y/(2c_k)}}{\sqrt{2\pi c_k y}}\,,
  \]
  and the density $q$ of $Y$ is the convolution $q_1*\cdots*q_n$.
  We write this convolution more explicitly.
  Define the simplex
  \[
    \Delta_{n-1}
    = \lt\{
      (s_1,\ldots,s_{n-1})\in [0,\infty)^{n-1}:
      \sum_{k=1}^{n-1}s_k\le 1
    \rt\}\,,
  \]
  and let $s_n = 1 - \sum_{k=1}^{n-1} s_k$.
  For $y>0$, writing $y_k = ys_k$ in the convolution integral yields
  \beq\label{e:compound-chi-square-density}
    q(y)
    = \fr{y^{n/2-1}}{(2\pi)^{n/2}\prod_{k=1}^n\sqrt{c_k}}
    \int_{\Delta_{n-1}}
    \lt(\prod_{k=1}^ns_k^{-1/2}\rt)
    \exp\lt(
      -\fr y2\sum_{k=1}^n\fr{s_k}{c_k}
    \rt)
    \,\de s_1\cdots\de s_{n-1}\,.
  \eeq
  Note that the first factor in the integral is integrable, as the Dirichlet integral satisfies
  \beq\label{e:dirichlet}
    \int_{\Delta_{n-1}}
    \prod_{k=1}^ns_k^{-1/2}
    \,\de s_1\cdots\de s_{n-1}
    = \fr{\Gamma(1/2)^n}{\Gamma(n/2)}
    <\infty\,.
  \eeq
  Thus $q(y)$ is finite for all $y>0$.
  It is also clearly positive for all $y>0$ because the integrand in \eqref{e:compound-chi-square-density} is positive.
  By \eqref{e:dirichlet} and dominated convergence, for any sequence $y_m \rightarrow y \in (0,\infty)$ we have $q(y_m) \rightarrow q(y)$, and thus $q$ is continuous on $(0,\infty)$.
  Finally, \eqref{e:dirichlet} implies the existence of $C>0$ depending only on $c_1,\ldots,c_n$ such that
  \[
    q(y) \le C y^{n/2-1} e^{-y/(2c_{\max})}
  \]
  for all $y>0$, where $c_{\max} = \max(c_1,\ldots,c_n)$.
  This implies $q(y) \rightarrow 0$ as $y\downarrow 0$ or $y\uparrow +\infty$.
  Together with continuity on $(0,\infty)$, this shows $q$ is bounded.
\end{proof}
\begin{proof}[Proof of Lemma~\ref{l:constraint-densities}]
  We prove the result for $T_a$.
  The proof for $T_{a,>n}$ follows by relabeling $a_{n+i}$ to $a_i$, which preserves the assumption \eqref{e:a-asymptotic}.
  Let
  \beq\label{e:Phi}
    \Phi_a(u) = \bbE \lt[e^{iuT_a}\rt]
  \eeq
  be the characteristic function of $T_a$.
  We will first justify that
  \beq\label{e:constraint-characteristic}
    \Phi_a(u) = \prod_{k\ge1} e^{-iua_k}(1-2iua_k)^{-1/2}\,,
  \eeq
  where we use the principal square root.
  Define for use below
  \[
    L_u(v)=-iuv-\fr12\operatorname{Log}_{\mathrm{pr}}(1-2iuv)\,.
  \]
  A direct calculation shows that $S_{a,n}$ has characteristic function
  \[
    \Phi_{a,n}(u) = \bbE\lt[e^{iuS_{a,n}}\rt]
    = \prod_{k=1}^n \bbE\lt[e^{iua_k (g_k^2-1)}\rt]
    = \prod_{k=1}^n e^{-iua_k}(1-2iua_k)^{-1/2}
    = \exp\lt(\sum_{k=1}^n L_u(a_k)\rt)\,.
  \]
  As noted above, $S_{a,n} \rightarrow T_a$ in $L^2$.
  Thus, for any $u \in \bbR$,
  \[
    |\Phi_{a,n}(u) - \Phi_a(u)|
    = \lt|\bbE \lt[e^{iuS_{a,n}}\rt] - \bbE \lt[e^{iuT_a}\rt]\rt|
    \le |u| \|S_{a,n} - T_a\|_{L^2}\,.
  \]
  This implies that 
  \beq\label{e:Phin-to-Phi}
    \Phi_{a,n}(u) \rightarrow \Phi_a(u)
  \eeq
  uniformly over $u$ in any compact set.
  As $L_u(0) = 0$ and $L'_u(v) = -\fr{2u^2v}{1-2iuv}$, we also have
  \[
    |L_u(v)| \le \int_0^{|v|} \fr{2u^2t}{|1-2iut|}\,\de t
    \le u^2v^2\,.
  \]
  Together with the first estimate from \eqref{e:a-basic-summability}, we conclude that $\sum_k L_u(a_k)$ converges absolutely, uniformly over $u$ in any compact set.
  Thus, uniformly over $u$ in any compact set,
  \[
    \Phi_{a,n}(u) \rightarrow \exp\lt(\sum_{k\ge 1} L_u(a_k)\rt)
    = \prod_{k\ge1} e^{-iua_k}(1-2iua_k)^{-1/2}\,.
  \]
  Together with \eqref{e:Phin-to-Phi} this proves \eqref{e:constraint-characteristic}.
  We will next prove that there exist $C_a, c_a > 0$ such that
  \beq\label{e:Phi-decay}
    |\Phi_a(u)|\le C_a\exp(-c_a|u|^{3/2}).
  \eeq
  Taking absolute values in \eqref{e:constraint-characteristic} gives
  \beq\label{e:Phi-modulus}
    |\Phi_a(u)|
    = \prod_{k\ge1} (1+4u^2a_k^2)^{-1/4}.
  \eeq
  For $|u|$ sufficiently large, \eqref{e:a-asymptotic} implies that there are of order $|u|^{3/2}$ terms $a_k$ such that $1\le |u|a_k \le 2$.
  Restricting the product in \eqref{e:Phi-modulus} to these terms shows \eqref{e:Phi-decay}.
  Thus $\Phi_a$ is integrable.
  The Fourier inversion formula
  \[
    2\pi p_a(x) = \int_{\bbR} e^{-iux} \Phi_a(u) \,\de u
  \]
  then implies $p_a$ is bounded and continuous.
  We turn to positivity of $p_a$.
  Consider any $x\in \bbR$ and $\eps > 0$.
  Choose $n \ge 3$ large enough that
  \balnn
    A_n \equiv \sum_{k=1}^n a_k &> \eps-x\,, &
    2\sum_{k>n}a_k^2 &< \eps^2\,.
  \ealnn
  Such $n$ exists by \eqref{e:a-basic-summability}.
  Write $T_a = S_{a,n} + T_{a,>n}$.
  By Lemma~\ref{l:constraint-densities-positive}, $S_{a,n}$ has a continuous density $f_{a,n}$ which is positive on $(-A_n, +\infty)$.
  By Chebyshev's inequality, $\bbP(|T_{a,>n}|<\eps) > 0$.
  It follows that
  \[
    p_a(x) = \bbE f_{a,n}(x-T_{a,>n}) > 0\,. \qedhere
  \]
\end{proof}

\begin{lem}\label{l:canonical-disintegration}
  Let $\varphi(x) = (2\pi)^{-1/2} \exp(-x^2/2)$ be the standard Gaussian density.
  For $s\in\bbR$ and $n\ge1$, let
  \beq\label{e:canonical-pin-marginal-s}
    \nu_{a,s}^{(n)}(\de z_1\cdots\de z_n)
    = \fr{1}{p_a(s)}
    \lt(\prod_{k=1}^n\varphi(z_k)\,\de z_k\rt)
    p_{a,>n}\lt(
      s-\sum_{k=1}^na_k(z_k^2-1)
    \rt)\,.
  \eeq
  For every $s\in \bbR$, the family $(\nu_{a,s}^{(n)})_{n\ge1}$ consists of projectively consistent probability measures and therefore determines a unique probability measure $\nu_{a,s}$ on $\bbR^{\bbN}$.
  Moreover, $s\mapsto\nu_{a,s}$ is a probability kernel.
\end{lem}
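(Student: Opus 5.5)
The plan is to verify three things in turn: each $\nu^{(n)}_{a,s}$ is a probability measure, the family is projectively consistent, and the resulting measures depend measurably on $s$. Throughout, the key structural fact is the independence decomposition
\[
  T_a = S_{a,n} + T_{a,>n}\,, \qquad T_{a,>n} = a_{n+1}(g_{n+1}^2-1) + T_{a,>n+1}\,,
\]
where the summands in each decomposition are independent. By Lemma~\ref{l:constraint-densities}, the densities $p_a$ and $p_{a,>n}$ exist and are bounded, continuous and strictly positive, so the prefactor $1/p_a(s)$ is finite and $\nu^{(n)}_{a,s}$ is a well-defined nonnegative measure.

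\textbf{Normalization.} Since $S_{a,n}$ is a function of $(g_1,\ldots,g_n)$ and is independent of $T_{a,>n}$, the density of $T_a$ is the convolution of the law of $S_{a,n}$ with $p_{a,>n}$. Hence
\[
  p_a(s) = \int \prod_{k\le n}\varphi(z_k)\, p_{a,>n}\Big(s-\sum_{k\le n}a_k(z_k^2-1)\Big)\,\de z\,.
\]
A priori this identity only holds for a.e.\ $s$. To get it for every $s$, note that both sides are continuous in $s$: the left side by Lemma~\ref{l:constraint-densities}, and the right side by dominated convergence, using that $p_{a,>n}$ is bounded and continuous. Therefore $\nu^{(n)}_{a,s}$ has total mass one for every $s$.

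\textbf{Consistency.} Integrate the density of $\nu^{(n+1)}_{a,s}$ over $z_{n+1}$. It suffices to show that for every $t$,
\[
  \int \varphi(z)\,p_{a,>n+1}\big(t-a_{n+1}(z^2-1)\big)\,\de z = p_{a,>n}(t)\,.
\]
This is again the convolution identity, now for $T_{a,>n}=a_{n+1}(g_{n+1}^2-1)+T_{a,>n+1}$. It holds a.e.\ by independence and is upgraded to every $t$ by continuity of both sides, exactly as in the normalization step. Taking $t=s-\sum_{k\le n}a_k(z_k^2-1)$ shows that the $n$-marginal of $\nu^{(n+1)}_{a,s}$ is $\nu^{(n)}_{a,s}$. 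Kolmogorov's extension theorem on the Polish spaces $\bbR^n$ then gives a unique probability measure $\nu_{a,s}$ on $\bbR^{\bbN}$.

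\textbf{Kernel property.} We must show that $s\mapsto \nu_{a,s}(B)$ is Borel for every measurable $B\subseteq\bbR^{\bbN}$. For cylinder sets $B=B_n\times\bbR^{\bbN}$, the map $s\mapsto\nu^{(n)}_{a,s}(B_n)$ is continuous by dominated convergence, and in particular measurable. Cylinder sets form a $\pi$-system generating the product $\sigma$-algebra. The class of sets $B$ for which $s\mapsto\nu_{a,s}(B)$ is measurable is a $\lambda$-system, being closed under complements and countable disjoint unions. Dynkin's $\pi$--$\lambda$ theorem therefore extends measurability to all $B$.

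\textbf{Main obstacle.} The delicate step is making the convolution identities hold pointwise in $s$ rather than almost everywhere. This is exactly where the continuity and strict positivity from Lemma~\ref{l:constraint-densities} are needed; everything else is routine.
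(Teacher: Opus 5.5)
Your proposal is correct and follows the paper's proof essentially step for step. Normalization and projective consistency both come from the convolution identities for $T_a = S_{a,n}+T_{a,>n}$ and $T_{a,>n}=a_{n+1}(g_{n+1}^2-1)+T_{a,>n+1}$, upgraded from almost everywhere to everywhere by continuity; Kolmogorov extension follows, and measurability is then extended from cylinder sets. The only differences are cosmetic: you use Dynkin's $\pi$--$\lambda$ theorem where the paper uses a monotone class argument, and you observe that cylinder probabilities are continuous in $s$ rather than merely Borel.
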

\begin{proof}
  The denominator $p_a(s)$ in \eqref{e:canonical-pin-marginal-s} is positive by Lemma~\ref{l:constraint-densities}.
  Integrating the numerator gives the convolution density of $S_{a,n}+T_{a,>n}=T_a$ at $s$.
  This equals $p_a(s)$ for every $s$, because the two continuous densities agree almost everywhere.
  Thus \eqref{e:canonical-pin-marginal-s} defines a probability measure.

  For fixed $z_1,\ldots,z_{n-1}$, convolution in the last coordinate gives
  \[
    \int_{\bbR}
    \varphi(z_n)
    p_{a,>n}\lt(
      s-\sum_{k=1}^na_k(z_k^2-1)
    \rt)\,\de z_n
    = p_{a,>n-1}\lt(
      s-\sum_{k=1}^{n-1}a_k(z_k^2-1)
    \rt).
  \]
  The identity holds pointwise by continuity.
  Hence the family $(\nu_{a,s}^{(n)})_{n\ge1}$ is projectively consistent.
  Kolmogorov's extension theorem gives a probability measure $\nu_{a,s}$ on $\bbR^{\bbN}$ for every $s\in\bbR$.

  For every cylinder set $C\subset\bbR^{\bbN}$, \eqref{e:canonical-pin-marginal-s} shows that $s\mapsto\nu_{a,s}(C)$ is Borel.  The collection
  \[
    \sD_a
    = \{
      B \in \cB(\bbR^{\bbN}):
      s\mapsto\nu_{a,s}(B)\text{ is Borel}
    \}
  \]
  is a monotone class containing the cylinder sets.
  It is therefore the whole Borel $\sigma$-field of $\bbR^{\bbN}$, so $s\mapsto\nu_{a,s}$ is a probability kernel.
\end{proof}
We formally define the conditioning in Definition~\ref{d:limit-distribution} by
\beq\label{e:canonical-pin-definition}
  \nu_a=\nu_{a,0}\,.
\eeq
\begin{lem}\label{l:constraint-fiber}
  Under $\nu_a$,
  \[
    \sum_{k=1}^na_k(g_k^2-1)\rightarrow 0
  \]
  almost surely and in $L^2(\nu_a)$.
\end{lem}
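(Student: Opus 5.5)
Under $\nu_a$, the statement is the same as saying $T_{a,>n}$ tends to zero. Indeed, under $\nu_{a,s}$ the tail $\sum_{k>n}a_k(g_k^2-1)$ should behave like a version of $T_{a,>n}$ tilted by the constraint. So the plan is to compute the $L^2(\nu_a)$ norm of $S_{a,n}=\sum_{k\le n}a_k(g_k^2-1)$ directly from the finite-dimensional marginal \eqref{e:canonical-pin-marginal-s}, and then upgrade to almost sure convergence via a martingale.

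\textbf{$L^2$ convergence.} Since $S_{a,n}$ depends only on $z_1,\ldots,z_n$, the marginal gives
\[
  \bbE_{\nu_a}[S_{a,n}^2] = \fr{1}{p_a(0)}\,\bbE\big[S_{a,n}^2\, p_{a,>n}(-S_{a,n})\big],
\]
where the right-hand expectation is over independent Gaussians. Write $S_{a,n} = -T_{a,>n}'$ on the event that it is weighted by $p_{a,>n}(-S_{a,n})$. Then this quantity equals the value at $0$ of the density of $T_a$ weighted by $T_{a,>n}^2$:
\[
  \bbE_{\nu_a}[S_{a,n}^2] = \fr{1}{p_a(0)}\,\bbE\big[T_{a,>n}^2\,f_{a,n}(-T_{a,>n})\big],
\]
with $f_{a,n}$ the density of $S_{a,n}$. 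This identity comes from Fubini together with the convolution structure, so it is justified exactly as in the proof of Lemma~\ref{l:canonical-disintegration}.

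It then suffices to bound $\sup_n \|f_{a,n}\|_\infty<\infty$. Given that, the right side is at most $C\,\bbE T_{a,>n}^2 = 2C\sum_{k>n}a_k^2\to0$. The uniform density bound follows from Fourier inversion: $|\bbE e^{iuS_{a,n}}| = \prod_{k\le n}(1+4u^2a_k^2)^{-1/4}$. For $n\ge 3$ this is bounded by $\prod_{k\le 3}(1+4u^2a_3^2)^{-1/4}$, which decays like $|u|^{-3/2}$ and is therefore integrable uniformly in $n$. This is the main point needing care: the density of $S_{a,n}$ must be uniformly bounded for $n\ge 3$, and the first three factors already provide that.

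\textbf{Almost sure convergence.} Under $\nu_a$, I would show that $M_n = S_{a,n}$ is close to a martingale, or else use a Borel--Cantelli argument along a subsequence combined with a maximal inequality. The simplest route is to choose $n_j$ with $\sum_j \sum_{k>n_j}a_k^2<\infty$, so that $\sum_j \bbE_{\nu_a} S_{a,n_j}^2<\infty$ and hence $S_{a,n_j}\to0$ almost surely.

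To control the fluctuations between $n_j$ and $n_{j+1}$, apply the same density trick to $\max_{n_j\le n\le n_{j+1}}|S_{a,n}-S_{a,n_j}|^2$. This maximum is a function of $z_1,\ldots,z_{n_{j+1}}$. Its $\nu_a$-expectation is at most $\|p_{a,>n_{j+1}}\|_\infty/p_a(0)$ times the unconditioned expectation, which by Doob's inequality is $O(\sum_{k>n_j}a_k^2)$. For this step I need $\sup_m \|p_{a,>m}\|_\infty<\infty$. That fails as $m\to\infty$, since $T_{a,>m}$ concentrates, so instead I condition on more coordinates. I would split the constraint weight so that the density factor comes from a fixed block: bound the expectation by $\sup\|f\|$, where $f$ is the density of $\sum_{k\in B}a_k(g_k^2-1)$ for a block $B$ of three coordinates beyond $n_{j+1}$, whose $a_k$ are comparable. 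That supremum grows only like $a_{n_{j+1}}^{-1}\asymp n_{j+1}^{2/3}$, which I would compensate by choosing $n_j$ growing geometrically.

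I expect this last balancing between density blowup and tail decay to be the main obstacle. If it does not close, I would fall back on a reverse martingale argument: under $\nu_a$, the tail $\sigma$-fields and the exchangeability within blocks should give convergence.
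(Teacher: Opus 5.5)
\textbf{The almost-sure step does not close; the $L^2$ step is fine.}

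Your $L^2$ step is correct and is the paper's argument. The paper bounds $\|f_{a,n}\|_\infty\le\|f_{a,3}\|_\infty$ via $f_{a,n}=f_{a,3}*\rho_{4:n}$; your Fourier bound through the first three factors does the same job.

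The almost-sure step fails as designed. To control $\max_{n_j\le n\le n_{j+1}}|S_{a,n}-S_{a,n_j}|$, you absorb the constraint weight with a three-coordinate block beyond $n_{j+1}$. That block's density has sup $\asymp a_{n_{j+1}}^{-1}\asymp n_{j+1}^{2/3}$, and you pair it with the Doob bound $O\bigl(\sum_{n_j<k\le n_{j+1}}a_k^2\bigr)$. Geometric growth does not compensate: for $n_j=2^j$ the product is $\asymp n_j^{2/3}\cdot n_j^{-1/3}=n_j^{1/3}\to\infty$. In fact no subsequence works, since $n_{j+1}^{2/3}\sum_{n_j<k\le n_{j+1}}k^{-4/3}\ge\sum_{n_j<k\le n_{j+1}}k^{-2/3}$, and these sum to $+\infty$ over $j$. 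The reverse-martingale/exchangeability fallback is not an argument. The coordinates are not exchangeable under $\nu_a$, because the $a_k$ are distinct, and no martingale is identified.

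\textbf{The missing idea, and the paper's route.} Reuse the trick from your own $L^2$ step: put the sup-norm on a \emph{head} block containing $g_1,g_2,g_3$, not on a tail block. Take $3\le\ell<m$ and integrate $z_1,\ldots,z_\ell$ out of \eqref{e:canonical-pin-marginal-s} (with $s=0$, $n=m$). This shows that $(g_k)_{\ell<k\le m}$ has $\nu_a$-density $\prod_{\ell<k\le m}\varphi(z_k)\cdot(f_{a,\ell}*p_{a,>m})\bigl(-\sum_{\ell<k\le m}a_k(z_k^2-1)\bigr)/p_a(0)$. This is at most $\|f_{a,3}\|_\infty/p_a(0)$ times the Gaussian density, uniformly in $\ell$ and $m$. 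Doob then gives $\bbE_{\nu_a}\max_{n_j\le n\le n_{j+1}}|S_{a,n}-S_{a,n_j}|^2\le 8\|f_{a,3}\|_\infty p_a(0)^{-1}\sum_{n_j<k\le n_{j+1}}a_k^2$. This is summable in $j$ because the blocks are disjoint, so your subsequence scheme closes for any increasing $(n_j)$ with $n_0\ge3$.

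The paper skips the maximal inequality altogether. It applies the same head-density bound to $p$-th moments with $p>6$. Rosenthal gives $\bbE|T_{a,>n}|^p\lesssim n^{-p/6}$, hence $\bbE_{\nu_a}|S_{a,n}|^p\lesssim n^{-p/6}$, and Borel--Cantelli then runs over every $n$ with thresholds $n^{-\delta}$. Higher moments would even rescue your tail-block bound, since $n_{j+1}^{2/3}n_j^{-p/6}$ is summable along $n_j=2^j$ once $p>4$.
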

\begin{proof}
  Let $f_{a,n}$ denote the density of $S_{a,n}$ under the product Gaussian law.
  The formula \eqref{e:canonical-pin-marginal-s} shows that under $\nu_a$, $S_{a,n}$ has density
  \beq\label{e:S-an-density}
    \tilde f_{a,n}(x) = \fr{f_{a,n}(x)p_{a,>n}(-x)}{p_a(0)}\,.
  \eeq
  For $n\ge 3$, write $f_{a,n}=f_{a,3}*\rho_{4:n}$, where $\rho_{4:n}$ is a probability measure.
  Lemma~\ref{l:constraint-densities-positive} implies that $\|f_{a,3}\|_\infty<\infty$, and therefore $\|f_{a,n}\|_\infty \le \|f_{a,3}\|_\infty$.
  It follows that as $n\rightarrow\infty$,
  \[
    \bbE_{\nu_a} \lt[S_{a,n}^2\rt]
    = \int x^2 \tilde f_{a,n}(x)\,\de x
    \le \fr{\|f_{a,3}\|_\infty}{p_a(0)}
         \int_{\bbR}x^2p_{a,>n}(-x)\,\de x
    =\fr{2\|f_{a,3}\|_\infty}{p_a(0)}
         \sum_{k>n}a_k^2
    \rightarrow 0\,.
  \]
  This shows the desired convergence in $L^2(\nu_a)$.
  We turn to the proof of almost-sure convergence.
  Fix $p>6$, and for $m>n$ set
  \[
    R_{n,m}=\sum_{n<k\le m}a_k(g_k^2-1)\,.
  \]
  Rosenthal's inequality gives that for $C_p$ depending only on $p$,
  \[
    \bbE|R_{n,m}|^p
    \le C_p\lt[
      \lt(\sum_{n<k\le m}a_k^2\rt)^{p/2}
      +\sum_{n<k\le m}a_k^p
    \rt],
  \]
  where the moments of $g_k^2-1$ have been absorbed into $C_p$.
  Since
  \balnn
    \sum_{k>m} a_k^2 &\rightarrow 0\,, & 
    \sum_{k>m} a_k^p &\rightarrow 0\,,
  \ealnn
  as $m\rightarrow\infty$, for any fixed $n$ the sequence $(R_{n,m}: m > n)$ is Cauchy in $L^p$.
  It thus has an $L^p$ limit $R_{n,\infty}$.
  On the other hand, the argument just below \eqref{e:Ta} shows that $(R_{n,m}: m > n)$ converges almost surely, and thus in probability, to $T_{a,>n}$.
  Uniqueness of limits in probability implies $R_{n,\infty} = T_{a,>n}$.
  Hence,
  \[
    \bbE|T_{a,>n}|^p
    \le C_p\lt[
      \lt(\sum_{k>n}a_k^2\rt)^{p/2}
      +\sum_{k>n}a_k^p
    \rt]\,.
  \]
  The assumption \eqref{e:a-asymptotic} implies $a_k \asymp k^{-2/3}$, so there exists $C_{a,p}$ depending only on $a,p$ such that
  \[
    \bbE|T_{a,>n}|^p
    \le C_{a,p} n^{-p/6}\,.
  \]
  Then, recalling \eqref{e:S-an-density} and $\|f_{a,n}\|_\infty\le\|f_{a,3}\|_\infty$,
  \balnn
    \bbE_{\nu_a}\lt[|S_{a,n}|^p\rt]
    = \int |x|^p \tilde f_{a,n}(x)\,\de x
    &\le \fr{\|f_{a,3}\|_\infty}{p_a(0)}
    \int_{\bbR} |x|^p p_{a,>n}(-x)\,\de x \\
    &= \fr{\|f_{a,3}\|_\infty}{p_a(0)} \bbE|T_{a,>n}|^p
    \le C'_{a,p} n^{-p/6}\,,
  \ealnn
  for some $C'_{a,p}$ depending only on $a,p$.
  Finally, set $\eps_n = n^{-\delta}$, for $\delta>0$ small enough that $p(\fr16 - \delta) > 1$.
  Then,
  \[
    \sum_{n\ge 3}\nu_a(|S_{a,n}|>\eps_n)
    \le \sum_{n\ge 3} \eps_n^{-p} \bbE_{\nu_a}\lt[|S_{a,n}|^p\rt]
    \le C'_{a,p} \sum_{n\ge 3} n^{-p(1/6-\delta)}
    <\infty\,.
  \]
  The Borel--Cantelli lemma then implies $S_{a,n} \rightarrow 0$ almost surely.
\end{proof}

\subsection{Square integrability}
\label{ss:limit-square-integrability}

In this subsection, we prove Proposition~\ref{p:limit-distribution-square-integrable-deterministic}.
The main ingredient is the following second-moment bound for $\nu_a$, proved using the same Fourier representation as in \S\ref{ss:canonical-pin}.
Recall $T_a$ defined in \eqref{e:Ta}, and its characteristic function $\Phi_a$ defined in \eqref{e:Phi}.
\begin{lem}\label{l:pin-moments}
  Let $a$ be a decreasing positive sequence satisfying \eqref{e:a-asymptotic}.
  Then,
  \beq\label{e:pin-moment-formula}
    \bbE_{\nu_a} \lt[g_k^2\rt]
    =\fr{
      \int_{\bbR}
      \Phi_a(u)(1-2iua_k)^{-1}\,\de u
    }{
      \int_{\bbR}\Phi_a(u)\,\de u
    }\,.
  \eeq
  \beq\label{e:pin-moment-bound}
    \sup_{k\ge 1}
    \bbE_{\nu_a}\lt[g_k^2\rt] < \infty\,.
  \eeq
  Moreover, $\nu_a$ is invariant under changing the sign of any individual $g_k$.
\end{lem}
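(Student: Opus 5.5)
The plan is to compute everything from the finite-dimensional marginals \eqref{e:canonical-pin-marginal-s} with $s=0$ and $n=k$, and then to pass to Fourier space.

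\emph{Sign invariance.} The density in \eqref{e:canonical-pin-marginal-s} depends on $(z_1,\dots,z_n)$ only through $\varphi(z_j)$ and $z_j^2$, so it is invariant under $z_j\mapsto -z_j$ for each $j\le n$. Hence every finite-dimensional marginal of $\nu_a$ is invariant under flipping the sign of coordinate $j$ (take $n\ge j$). The flipped measure has the same marginals, so by uniqueness in Kolmogorov's extension theorem $\nu_a$ itself is invariant.

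\emph{Moment formula.} Using the $n=k$ marginal and integrating out $z_1,\dots,z_{k-1}$ by convolution (as in the proof of Lemma~\ref{l:canonical-disintegration}), I obtain
\[
  \bbE_{\nu_a}[g_k^2] = \fr{1}{p_a(0)}\int z^2\varphi(z)\, p_{a,\ne k}\lt(-a_k(z^2-1)\rt)\de z\,,
\]
where $p_{a,\ne k}$ is the density of $T_a - a_k(g_k^2-1)$. This density exists, is bounded and continuous, and has integrable characteristic function $\Phi_a(u)e^{iua_k}(1-2iua_k)^{1/2}$; integrability follows from \eqref{e:Phi-decay}, since the extra factor grows only like $|u|^{1/2}$. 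Write $p_{a,\ne k}$ by Fourier inversion and apply Fubini, which is justified because $\int z^2\varphi\,dz<\infty$ and the characteristic function is integrable. The inner $z$-integral is
\[
  \bbE\lt[g^2 e^{-iua_k(g^2-1)}\rt]=e^{iua_k}(1+2iua_k)^{-3/2}.
\]
This gives $\int \overline{\Phi_a(u)}\,(1+2iua_k)^{-1}\,\de u$, up to the common $2\pi$, with $u\mapsto -u$ conventions. By the same computation without $z^2$, $2\pi p_a(0)=\int\Phi_a$. After the substitution $u\to -u$, the two expressions match \eqref{e:pin-moment-formula}. An alternative route is to differentiate $\bbE[e^{iuT_a}g_k^2]=\Phi_a(u)(1-2iua_k)^{-1}$, which comes from the Gaussian identity $\bbE[g^2e^{itg^2}]=(1-2it)^{-3/2}$, and then pin via Fourier inversion. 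Both routes reach the same formula, and I would present whichever makes the Fubini justification cleanest.

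\emph{Uniform bound.} From \eqref{e:pin-moment-formula} and $|1-2iua_k|^{-1}\le 1$ I get
\[
  \bbE_{\nu_a}[g_k^2]\le \fr{\int|\Phi_a|}{\lt|\int \Phi_a\rt|}=\fr{\int|\Phi_a|}{2\pi p_a(0)}.
\]
The right side is finite by \eqref{e:Phi-decay} and is independent of $k$, and the denominator is positive by Lemma~\ref{l:constraint-densities}. This proves \eqref{e:pin-moment-bound}.

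The main obstacle is bookkeeping in the moment formula: justifying the pointwise (rather than a.e.) identity, handling the Fubini interchange and the integrability of the modified characteristic function, and tracking signs and conjugations so the result matches \eqref{e:pin-moment-formula} exactly. Once the formula is in hand, the uniform bound is immediate.
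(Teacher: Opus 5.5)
Your proposal is correct and follows essentially the paper's argument. You express $\bbE_{\nu_a}[g_k^2]$ through the $n=k$ marginal as a ratio of two continuous densities at $0$, Fourier-invert both (integrability coming from \eqref{e:Phi-decay}), bound the ratio using $|(1-2iua_k)^{-1}|\le 1$, and read sign invariance off the marginals. The only cosmetic difference is in the inversion step: the paper inverts the density $q_{a,k}$ of $B\mapsto\bbE[g_k^2\bbone\{T_a\in B\}]$ directly from its transform $\Phi_a(u)(1-2iua_k)^{-1}$, which is your ``alternative route'', whereas your main route inverts the density of $T_a-a_k(g_k^2-1)$ and applies Fubini in $z_k$.
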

\begin{proof}
  Recall that $\varphi(x) = (2\pi)^{-1/2} \exp(-x^2/2)$ denotes the standard Gaussian density.
  Further recall the density $p_{a,>k}$ defined in Lemma~\ref{l:constraint-densities}. 
  Define
  \[
    q_{a,k}(x)=
    \int_{\bbR^k} z_k^2
    \lt(\prod_{j=1}^k \varphi(z_j)\rt)
    p_{a,>k}\lt(
      x-\sum_{j=1}^k a_j(z_j^2-1)
    \rt)\,\de z\,.
  \]
  Conditioning on the first $k$ coordinates shows that $q_{a,k}$ is the density of the finite measure
  $B\mapsto\bbE[g_k^2\bbone\{T_a\in B\}]$.
  Its continuity follows by dominated convergence from the boundedness and continuity of $p_{a,>k}$ in Lemma~\ref{l:constraint-densities}.
  By the definitions \eqref{e:canonical-pin-marginal-s}, \eqref{e:canonical-pin-definition},
  \[
    \bbE_{\nu_a}\lt[ g_k^2\rt]
    = \fr{q_{a,k}(0)}{p_a(0)}\,.
  \]
  We will evaluate this ratio by Fourier inversion.
  A direct calculation shows
  \[
    \bbE\lt[g_k^2e^{iua_k(g_k^2-1)}\rt]
    = e^{-iua_k}(1-2iua_k)^{-3/2}\,,
  \]
  where we use the principal power.
  Thus the Fourier transform of $q_{a,k}$ is
  \beq\label{e:q-fourier-transform}
    \bbE\lt[g_k^2 e^{iuT_a}\rt]
    = \bbE\lt[g_k^2 e^{iua_k(g_k^2-1)}\rt] \prod_{j\neq k} \bbE\lt[e^{iua_j(g_j^2-1)}\rt]
    = \Phi_a(u) (1 - 2iua_k)^{-1}\,.
  \eeq
  Here the product is justified similarly as in Lemma~\ref{l:constraint-densities}.
  The right-hand side of \eqref{e:q-fourier-transform} and $\Phi_a(u)$ are both integrable by \eqref{e:Phi-decay}.
  Fourier inversion at $0$ yields
  \balnn
    2\pi q_{a,k}(0) &= \int_{\bbR} \Phi_a(u) (1 - 2iua_k)^{-1}\,\de u\,, \\
    2\pi p_a(0) &= \int_{\bbR} \Phi_a(u) \,\de u\,.
  \ealnn
  Dividing these equations proves \eqref{e:pin-moment-formula}.
  Since $|(1-2iua_k)^{-1}|\le 1$, we also have that
  \[
    \bbE_{\nu_a} \lt[g_k^2\rt]
    \le \fr{\int_{\bbR}|\Phi_a(u)|\,\de u}{2\pi p_a(0)}
  \]
  is bounded uniformly in $k$.
  This proves \eqref{e:pin-moment-bound}.
  Finally, every density \eqref{e:canonical-pin-marginal-s} at $s=0$ depends on each $g_k$ only through $g_k^2$.
  Thus every finite-dimensional marginal, and hence $\nu_a$, is invariant under changing the sign of $g_k$.
\end{proof}

\begin{proof}[Proof of
Proposition~\ref{p:limit-distribution-square-integrable-deterministic}]
  Let $g^{(1)},g^{(2)}$ be independent samples from $\nu_a$, and set
  \[
    m_k(a) = \bbE_{\nu_a}\lt[g_k^2\rt].
  \]
  By the conclusion \eqref{e:pin-moment-bound} of Lemma~\ref{l:pin-moments}, there exists $C = C(a) > 0$ such that $\sup_{k\ge 1} m_k(a) \le C$.

  Coordinate sign symmetry and Lemma~\ref{l:pin-moments} imply that $\bbE_{\nu_a}[g_jg_k]=0$ for $j\neq k$.
  Therefore, for any positive integers $m\le n$,
  \beq\label{e:bilinear-tail-isometry}
    \bbE_{\nu_a\otimes\nu_a} \lt|
      \sum_{k=m}^n a_kg_k^{(1)}g_k^{(2)}
    \rt|^2
    = \sum_{k=m}^n a_k^2 m_k(a)^2
    \le C^2 \sum_{k=m}^n a_k^2\,.
  \eeq
  The assumption \eqref{e:a-asymptotic} implies $\sum_k a_k^2 < \infty$.
  Thus the series defining $Q_a$ in Definition~\ref{d:limit-distribution} converges in $L^2$, and
  \[
    \bbE \lt[Q_a^2\rt]
    = \sum_{k\ge1} a_k^2 m_k(a)^2
    \le C^2 \sum_{k\ge 1}a_k^2
    <\infty\,. \qedhere
  \]
\end{proof}

\subsection{Measurability and finite-dimensional pins}
\label{ss:airy-regularity}

The goal of this subsection is to record two facts needed in \S\ref{s:overlap}: the Borel dependence of the limiting pinned overlap law $\fP_a$ on $a$, and a coarea identity relating finite pins to weighted spherical measures.

\begin{lem}\label{l:parameter-measurability}
  Let $\cA \subseteq \ell^2$ consist of the positive nonincreasing sequences $a = (a_k)_{k\ge 1}$ such that \eqref{e:a-asymptotic} holds.
  Then $\cA$ is a Borel subset of $\ell^2$.
  The map
  \[
    a\mapsto\fP_a
  \]
  is Borel from $\cA$ into $(\cP_2(\bbR),\bbW_2)$.
  Consequently,
  \[
    a\mapsto V_a \equiv \|\fP_a\|_{L^2}^2
  \]
  is Borel from $\cA$ into $\bbR$.
\end{lem}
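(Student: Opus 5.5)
The plan has three parts: show $\cA$ is Borel, establish measurability of the characteristic functions and finite-dimensional marginals, and then assemble these into measurability of $a\mapsto\fP_a$.

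\emph{Borelness of $\cA$.} Each coordinate map $a\mapsto a_k$ is continuous on $\ell^2$, so positivity and monotonicity are countable intersections of open or closed conditions. Condition \eqref{e:a-asymptotic} can be written as
\[
\bigcap_{m\ge1}\bigcup_{K\ge1}\bigcap_{k\ge K}\{|a_kt_k-1|\le 1/m\},
\]
which is a countable Boolean combination of closed sets.

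\emph{Measurability of the characteristic function and the finite-dimensional marginals.} First I would show that for each fixed $u$, the map $a\mapsto\Phi_a(u)$ is Borel. By \eqref{e:constraint-characteristic}, $\Phi_a(u)=\exp(\sum_k L_u(a_k))$, and the bound $|L_u(v)|\le u^2v^2$ shows this is actually continuous in $a\in\ell^2$. The same argument applies to $\Phi_{a,>n}$, the characteristic function of $T_{a,>n}$, and to the twisted transforms in \eqref{e:q-fourier-transform}. Next I would obtain measurability of densities via Fourier inversion: $2\pi p_{a,>n}(x)=\int e^{-iux}\Phi_{a,>n}(u)\,\de u$. The integrand is jointly Borel in $(a,u,x)$, and it is integrable for each $a\in\cA$ by \eqref{e:Phi-decay}, whose constants depend on $a$. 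Hence $(a,x)\mapsto p_{a,>n}(x)$ is Borel. To make this rigorous, I would write the integral as a limit of truncations $\int_{-R}^R$, each of which is continuous in $(a,x)$; pointwise limits of Borel functions are Borel. Consequently, for every bounded continuous $f$ on $\bbR^n$, the map $a\mapsto\int f\,\de\nu^{(n)}_{a}$ given by \eqref{e:canonical-pin-marginal-s} at $s=0$ is Borel, by Fubini/Tonelli for jointly measurable integrands.

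\emph{Measurability of $\fP_a$.} Let $Q_a^{(n)}=\sum_{k\le n}a_kg^{(1)}_kg^{(2)}_k$ and $\fP_a^{(n)}=\Law(Q_a^{(n)})$ under $\nu_a^{(n)}\otimes\nu_a^{(n)}$. For bounded continuous $h$ on $\bbR$, the map $a\mapsto\int h\,\de\fP^{(n)}_a$ is Borel by the previous step, since $h(\sum_{k\le n}a_kz_kw_k)$ is jointly continuous in $(a,z,w)$. Since the Borel $\sigma$-field of $(\cP_2(\bbR),\bbW_2)$ is generated by the maps $\mu\mapsto\int h\,\de\mu$ for $h\in C_b$ together with $\mu\mapsto\int x^2\,\de\mu$, this gives Borelness of $a\mapsto\fP^{(n)}_a$, after also checking that $a\mapsto\sum_{k\le n}a_k^2 m_k^{(n)}(a)^2$ is Borel, where $m_k^{(n)}$ are the $\nu_a^{(n)}$-second moments (equal to $m_k(a)$ by consistency). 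Finally, \eqref{e:bilinear-tail-isometry} gives $\bbW_2^2(\fP^{(n)}_a,\fP_a)\le\bbE|Q_a-Q_a^{(n)}|^2\to0$ for each $a$, so $\fP_a$ is a pointwise $\bbW_2$-limit of Borel maps into a metric space and is therefore Borel. The statement about $V_a$ follows because $\mu\mapsto\|\mu\|_{L^2}^2$ is continuous in $\bbW_2$.

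\emph{Main obstacle.} I expect the delicate step to be the Fourier-inversion measurability. The decay constants in \eqref{e:Phi-decay} are not uniform in $a$, so continuity in $a$ cannot be claimed; only Borelness via truncation limits is available. I would also need care with the generating-class fact for $\bbW_2$-Borel sets, which I would justify by using separability of $(\cP_2,\bbW_2)$ together with the fact that $\bbW_2$-balls are expressible through weak-topology functionals and second moments.
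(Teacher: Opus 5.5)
Your proposal is correct and follows essentially the same route as the paper. Both arguments show Borelness of $\cA$ from the coordinate conditions, get joint Borelness of $p_a$ and $p_{a,>n}$ by Fourier inversion, deduce that $a\mapsto\fP_a^{(n)}$ is Borel, and pass to $\fP_a$ as a pointwise $\bbW_2$-limit via \eqref{e:bilinear-tail-isometry}. The one real difference is how weak Borelness of $a\mapsto\fP_a^{(n)}$ is upgraded to $\bbW_2$-Borelness: the paper applies the Lusin--Souslin theorem to the continuous injection $(\cP_2(\bbR),\bbW_2)\to(\cP(\bbR),\text{weak})$, while your generating-class argument does the same job. That argument is cleanest via weak lower semicontinuity of $\mu\mapsto\bbW_2(\mu,\mu_0)$ together with separability of $(\cP_2(\bbR),\bbW_2)$.
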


\begin{proof}
  The coordinate maps are continuous on $\ell^2$, and
  \[
    \cA
    =
    \bigcap_{k\ge1}
    \lt(
      \{a\in\ell^2:a_k>0\}
      \cap
      \{a\in\ell^2:a_k\ge a_{k+1}\}
    \rt)
    \cap
    \bigcap_{r\in\bbQ_{>0}}
    \bigcup_{m\ge1}
    \bigcap_{k\ge m}
    \{a\in\ell^2:|a_kt_k-1|<r\}.
  \]
  Thus $\cA$ is Borel, and in particular is a standard Borel space.

  Recall that $\Phi_a$ is the characteristic function of $T_a$, which in the proof of Lemma~\ref{l:constraint-densities} we showed equals the infinite product \eqref{e:constraint-characteristic}.
  Fix $n\ge1$, and write $\Phi_{a,>n}$ for the characteristic function of $T_{a,>n}$.
  By the same proof, $\Phi_{a,>n}$ equals the infinite product \eqref{e:constraint-characteristic} restricted to terms $k>n$, and satisfies the decay rate \eqref{e:Phi-decay}.
  On $\cA\times\bbR$, the products $\Phi_a(u)$ and $\Phi_{a,>n}(u)$ are pointwise limits of finite products and hence are jointly Borel in $(a,u)$.
  The decay rate \eqref{e:Phi-decay} also implies that both are absolutely integrable.
  The parameter-integral theorem and Fourier inversion then show that
  \balnn
    p_a(x)
    &=
    \fr1{2\pi}\int_{\bbR}e^{-iux}\Phi_a(u)\,\de u\,, &
    p_{a,>n}(x)
    &=
    \fr1{2\pi}\int_{\bbR}e^{-iux}\Phi_{a,>n}(u)\,\de u
  \ealnn
  are jointly Borel in $(a,x)$.
  Lemma~\ref{l:constraint-densities} gives $p_a(0)>0$.
  Specializing \eqref{e:canonical-pin-marginal-s} to $s=0$, define
  \[
    \nu_a^{(n)}(\de z_1 \cdots \de z_n)
    =
    \fr{1}{p_a(0)}
    \lt(\prod_{k=1}^n\varphi(z_k)\,\de z_k\rt)
    p_{a,>n}\lt(-\sum_{k=1}^na_k(z_k^2-1)\rt).
  \]
  The preceding joint Borel properties show that $a\mapsto\nu_a^{(n)}$ is a probability kernel from $\cA$ to $\bbR^n$.
  Let $\fP_a^{(n)}$ be the law, under $\nu_a^{(n)}\otimes\nu_a^{(n)}$, of
  \[
    Q_a^{(n)}(z,z')=\sum_{k=1}^na_kz_kz_k'.
  \]
  The map $(a,z,z') \mapsto Q_a^{(n)}(z,z')$ is jointly Borel, and the product of two probability kernels is a probability kernel, so $a\mapsto\fP_a^{(n)}$ is weakly Borel as a map from $\cA$ into $(\cP(\bbR), \text{weak})$.
  Moreover, since $\nu_a^{(n)}$ is the first $n$-coordinate marginal of $\nu_a$, \eqref{e:bilinear-tail-isometry} from the proof of Proposition~\ref{p:limit-distribution-square-integrable-deterministic} shows that $\fP_a^{(n)}\in\cP_2(\bbR)$.
  The inclusion
  \[
    (\cP_2(\bbR),\bbW_2)
    \rightarrow
    (\cP(\bbR),\text{weak})
  \]
  is a continuous injection between Polish spaces.
  By the Lusin--Souslin theorem, its image is Borel and its inverse on that image is Borel.
  It follows that $a\mapsto\fP_a^{(n)}$ is Borel as a map from $\cA$ into $(\cP_2(\bbR),\bbW_2)$.

  Finally, we sample independent $g^{(1)}, g^{(2)} \sim \nu_a$ and couple $Q_a^{(n)}$ with $Q_a$ by
  \balnn
    Q^{(n)}_a &= \sum_{k=1}^n a_k g^{(1)}_k g^{(2)}_k\,, &
    Q_a &= \sum_{k\ge 1} a_k g^{(1)}_k g^{(2)}_k\,.
  \ealnn
  Then \eqref{e:bilinear-tail-isometry} gives
  \[
    \bbW_2^2(\fP_a^{(n)},\fP_a)
    \le \bbE_{\nu_a\otimes\nu_a}|Q_a^{(n)}-Q_a|^2
    = \sum_{k>n}a_k^2m_k(a)^2
    \rightarrow 0\,.
  \]
  Thus $a\mapsto\fP_a$ is the pointwise limit of Borel maps into the Polish space $(\cP_2(\bbR),\bbW_2)$ and is therefore Borel.
  Finally,
  \[
    V_a
    = \|\fP_a\|_{L^2}^2
    = \bbW_2^2(\fP_a,\delta_0)\,,
  \]
  so $a\mapsto V_a$ is Borel by continuity of the last expression in $\bbW_2$.
\end{proof}

For a finite positive vector $c=(c_1,\ldots,c_M)$ with $M\ge3$, let
\[
  F_c(z)=\sum_{k=1}^Mc_k(z_k^2-1).
\]
Zero is a regular value of $F_c$: if $\nabla F_c(z)=0$, then $z=0$, and $F_c(0)=-\sum_kc_k<0$.

Let $p_c$ denote the density of $F_c(g)$ for $g \sim \cN(0, I_M)$.
By Lemma~\ref{l:constraint-densities-positive}, $p_c$ is bounded and continuous, with $p_c(0) > 0$.
Let $\cH^{M-1}$ denote $(M-1)$-dimensional Hausdorff measure.
Then the coarea formula shows that
\[
  \tilde p_c(t)
  = \int_{F_c^{-1}(t)}
  \fr{1}{\|\nabla F_c(z)\|}
  \prod_{k=1}^M \varphi(z_k)
  \,\de\cH^{M-1}(z)
\]
is a version of the density of $F_c(g)$.
Since $0$ is a regular value of $F_c$ and $F_c^{-1}(0)$ is compact, $\tilde p_c$ is continuous in a neighborhood of $0$.
On this neighborhood, $p_c$ and $\tilde p_c$ are continuous versions of the same density, and hence $p_c(0)=\tilde p_c(0)$.

We then define the finite-dimensional pinned measure on $F_c^{-1}(0)$ by
\beq\label{e:finite-coarea-pin}
  \nu_c(\de z)
  = \fr1{p_c(0)\|\nabla F_c(z)\|}
  \prod_{k=1}^M \varphi(z_k)
  \,\de\cH^{M-1}(z)\,.
\eeq
Set $m_k(c) = \bbE_{\nu_c}[g_k^2]$ and $V_c = \sum_{k=1}^M c_k^2 m_k(c)^2$.
Only $M\ge3$ will be used below.

\begin{lem}[Diagonal coarea change of variables]
  \label{l:diagonal-coarea}
  Let $c=(c_1,\ldots,c_M)$ be positive, where $M\ge3$, and put $\alpha_c=\sum_{k=1}^Mc_k$.
  Under the map
  \[
    y_k=c_k^{1/2}z_k,
  \]
  write $\widetilde\nu_c$ for the pushforward of $\nu_c$, and let $\omega_r$ denote normalized surface measure on $r\bbS^{M-1}$.
  Then
  \beq\label{e:coarea-spherical-density}
    \widetilde\nu_c(\de y)
    =
    \fr1{Z_c}
    \exp\lt(-\fr12\sum_{k=1}^M\fr{y_k^2}{c_k}\rt)
    \,\de\omega_{\sqrt{\alpha_c}}(y),
  \eeq
  where $Z_c$ is the normalizing constant.
\end{lem}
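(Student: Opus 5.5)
The plan is to compute the pushforward of $\nu_c$ directly with the area formula for the linear map $D=\diag(c_1^{1/2},\ldots,c_M^{1/2})$. Under $y=Dz$, the fiber $F_c^{-1}(0)=\{z:\sum_k c_kz_k^2=\alpha_c\}$ is carried bijectively onto the sphere $\{y:\|y\|^2=\alpha_c\}=\sqrt{\alpha_c}\bbS^{M-1}$. The Gaussian weight becomes
\[
  \prod_k\varphi(z_k)=(2\pi)^{-M/2}\exp\Big(-\fr12\sum_k y_k^2/c_k\Big).
\]
So it suffices to show that the remaining factor $\|\nabla F_c(z)\|^{-1}\,\de\cH^{M-1}(z)$ pushes forward to a constant multiple of $\de\cH^{M-1}(y)$ on the sphere. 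The normalizing constant $Z_c$ then absorbs all constants, since both sides are probability measures.

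To verify this, I will use the tangential Jacobian of the area formula. For a linear map $D$ restricted to a hypersurface with unit normal $n$, the surface Jacobian is
\[
  J(z)=|\det D|\cdot\|D^{-\top}n\|.
\]
This follows from the identity for $(M-1)$-volume distortion, $\det D$ times the ratio of normal components via the cofactor matrix $\det(D)D^{-\top}$.

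At $z\in F_c^{-1}(0)$ we have $\nabla F_c(z)=2(c_kz_k)_k$, so
\[
  n=\fr{(c_kz_k)_k}{\|(c_kz_k)_k\|},\qquad D^{-1}n=\fr{(c_k^{1/2}z_k)_k}{\|(c_kz_k)_k\|}=\fr{y}{\|(c_kz_k)_k\|}.
\]
Hence
\[
  J(z)=\Big(\prod_k c_k^{1/2}\Big)\fr{\|y\|}{\|(c_kz_k)_k\|}=\Big(\prod_k c_k^{1/2}\Big)\fr{2\sqrt{\alpha_c}}{\|\nabla F_c(z)\|}.
\]
Therefore
\[
  \de\cH^{M-1}(y)=J(z)\,\de\cH^{M-1}(z)=\text{const}\cdot\|\nabla F_c(z)\|^{-1}\,\de\cH^{M-1}(z),
\]
which is exactly the factor appearing in \eqref{e:finite-coarea-pin}. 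Combining this with the Gaussian weight and noting that $\cH^{M-1}$ restricted to the sphere is a constant multiple of $\omega_{\sqrt{\alpha_c}}$ gives \eqref{e:coarea-spherical-density}.

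The main obstacle is justifying the Jacobian identity $J=|\det D|\,\|D^{-\top}n\|$ cleanly. I would prove it by choosing an orthonormal tangent basis $v_1,\ldots,v_{M-1}$ at $z$ and computing
\[
  |\det(Dv_1,\ldots,Dv_{M-1},\tilde n)|=|\det D|,\qquad \tilde n=\fr{D^{-\top}n}{\|D^{-\top}n\|},
\]
where $\tilde n$ is the unit normal to the image. This uses $\la Dv_i,D^{-\top}n\ra=\la v_i,n\rangle=0$ and $\la D n, D^{-\top}n\ra = 1$.

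An alternative avoids geometry altogether. Test against smooth $h$ and write $\int h\,\de\nu_c=\lim_{\eps\to0}\fr{1}{2\eps p_c(0)}\bbE[h(g)\bbone\{|F_c(g)|<\eps\}]$, which holds by the coarea formula and continuity of $\tilde p_c$ near $0$. Then change variables $y=Dg$ in $\bbR^M$, which has constant Jacobian, and reapply the coarea formula to $\|y\|^2-\alpha_c$, whose gradient norm is constant on the sphere. I would present this second route if the tangential Jacobian computation becomes cumbersome.
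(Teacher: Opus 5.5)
Your proposal is correct and follows essentially the same route as the paper. You apply the area formula to the diagonal map $y=Dz$ with tangential Jacobian $|\det D|\,\|D^{-\top}n_z\|$, observe that $\|\nabla F_c(z)\|$ times this Jacobian equals the constant $2\sqrt{\alpha_c}\,|\det D|$ on the fiber, and let $Z_c$ absorb all constants.

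There is one small slip in your justification of the Jacobian formula. The determinant equal to $|\det D|$ is $|\det(Dv_1,\ldots,Dv_{M-1},Dn)|$; with $\tilde n$ in the last column, the determinant equals $J$ itself. Expanding $Dn$ along $\tilde n$ and using $\langle Dn,D^{-\top}n\rangle=1$ gives $|\det D|=J/\|D^{-\top}n\|$, which is the formula you intended.
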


\begin{proof}
  Let $L=\diag(c_1^{1/2},\ldots,c_M^{1/2})$, so $y=Lz$ maps $F_c^{-1}(0)$ onto $\sqrt{\alpha_c}\,\bbS^{M-1}$.
  For the unit normal $n_z=\nabla F_c(z)/\|\nabla F_c(z)\|$, the tangential Jacobian is $JL(z)=|\det L|\,\|L^{-1}n_z\|$.
  Since $\nabla F_c(z)=2Ly$, we have $n_z=Ly/\|Ly\|$.
  Then, for $z\in F_c^{-1}(0)$,
  \[
    \|\nabla F_c(z)\|JL(z)
    =2|\det L|\,\|y\|
    =2\sqrt{\alpha_c}\,|\det L|\,.
  \]
  Thus the coarea factor $1/\|\nabla F_c(z)\|$ in \eqref{e:finite-coarea-pin} and the surface Jacobian combine to a constant under the area formula.
  The only remaining $y$-dependent factor is
  \[
    \exp\lt(-\fr12\|L^{-1}y\|^2\rt)
    =
    \exp\lt(-\fr12\sum_{k=1}^M\fr{y_k^2}{c_k}\rt)\,.
  \]
  Normalization gives \eqref{e:coarea-spherical-density}.
\end{proof}

\section{Sphere to cube comparison principle}
\label{s:sphere-to-cube}

In this section we prove Theorem~\ref{t:sphere-to-cube}. The main new input is the following proposition.
\begin{ppn}\label{p:single-f-test}
There exists a universal constant $t_0>0$ such that, for every fixed $t\in(-t_0,t_0)$,
    \[
    \la \exp(tN^{1/3}R_{1,2})\ra - \la \exp(tN^{1/3}R_{1,2})\ra^\sph \rightarrow 0    \]
in probability as $N\to\infty$, with respect to $\bW\sim\GOE(N)$.
\end{ppn}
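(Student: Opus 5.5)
We follow the route sketched in the introduction. Fix $t\in[0,t_0)$; the case $t<0$ follows from the reflection symmetry $R_{1,2}\mapsto -R_{1,2}$ of both quenched overlap laws (flip $\bx^2\mapsto-\bx^2$, which preserves both Gibbs measures). Write $A_N=\la \exp(tN^{1/3}R_{1,2})\ra$ and $A_N^\sph=\la \exp(tN^{1/3}R_{1,2})\ra^\sph$.

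\emph{Step 1: the averaged comparison \eqref{e:laplace-coincide-weak}.} By \eqref{e:reweighted-mt} with $g(q)=e^{tN^{1/3}q}$,
\[
\bbE[X_N^2A_N]-\bbE[A_N^\sph]=\bbE_q[J_N(q)e^{tN^{1/3}q}]-\bbE^\sph_q[J_N(q)e^{tN^{1/3}q}]\,.
\]
We then rerun the argument of \cite{du2026fluctuations} proving \eqref{e:overview-sphere-to-cube}, now carrying the extra weight $e^{tN^{1/3}q}$.
\begin{itemize}
\item[(i)] We first use the bounds on $J_N(q)$ from \cite{du2026fluctuations}. These give, roughly, $J_N(q)\lesssim \exp(cN^{2/3}q^2\wedge \cdots)$ in the window, and show the moments are dominated by $|q|\le KN^{-1/3}$.
\item[(ii)] Next we choose $t_0$ small, so that $e^{tN^{1/3}q}$, which is at most $e^{t_0 N^{1/3}|q|}$, does not destroy the tail decay used there. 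This decay is quantified by the exponential moment bounds of \cite[Theorem 1.4(a), Corollary 1.7(b)]{du2026fluctuations}, which give $\bbE\la e^{s N^{1/3}|R_{1,2}|}\ra$ and its spherical analogue bounded for small $s$. We apply them with $2t_0<s$, via Cauchy--Schwarz, to truncate to $|q|\le KN^{-1/3}$ with error $\eps(K)\to0$ uniformly in $N$.
\item[(iii)] On the truncated range, we compare $p_N(q)$ with $\rho_N(q)\cdot\fr2N$. Both equal $\exp(-\fr N2q^2+O(N^{-1/3}K^4))$ times the same normalization. We also replace the lattice sum by an integral, using regularity of $J_N$ in $q$ on the scale $N^{-1/3}\gg N^{-1}$. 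Since $e^{tN^{1/3}q}$ is bounded and Lipschitz on the scale $N^{-1/3}$ there, the error is $o_N(1)$ for fixed $K$. Sending $K\to\infty$ gives \eqref{e:laplace-coincide-weak}.
\end{itemize}

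\emph{Step 2: Hilbert space upgrade.} For $t\ge0$, the kernel $\exp(tN^{1/3}(\bx,\by)/N)$ is positive definite, being the exponential of a PSD kernel. Hence there is a feature map $\Phi=\Phi_{N,t}$ into a Hilbert space $\cH_N$. Expanding gives
\[
\bbE\|X_N\la\Phi\ra-\la\Phi\ra^\sph\|^2=\bbE[X_N^2A_N]-2\bbE[X_N(\la\Phi\ra,\la\Phi\ra^\sph)]+\bbE[A_N^\sph]\,.
\]
The key identity is that the cross term equals $\bbE[A_N^\sph]$. Indeed, $X_N\la\Phi\ra=(Z_N^\sph)^{-1}2^{-N}\sum_{\bx}e^{H_N(\bx)}\Phi(\bx)$. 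Averaging over a Haar rotation $O$ of $\bW$ leaves $\bW$'s law, $Z^\sph_N$, $\la\cdot\ra^\sph$, and the kernel invariant, since the kernel depends only on inner products. This replaces $\bx$ by a uniform point of $S_N$, so $\bbE[X_N\la\Phi\ra\mid \ldots]$ averaged gives $\la\Phi\ra^\sph$ paired against $\la\Phi\ra^\sph$. We carry this out concretely at the level of the scalar $\bbE[(Z_N^\sph)^{-2}e^{H(\bx)+H(\by)}e^{tN^{1/3}R(\bx,\by)}]$, with $\bx$ averaged over the cube and $\by$ over the sphere, which by rotation invariance equals the sphere--sphere value. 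Thus the left side equals the difference in Step 1, which is $o_N(1)$.

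\emph{Step 3: conclusion.} Apply $\bbE|\|\bu\|^2-\|\bv\|^2|\le \bbE\|\bu-\bv\|^2{}^{1/2}(2\bbE[\|\bu\|^2+\|\bv\|^2])^{1/2}$. The second factor is $O(1)$ by Step 1 together with the bounded exponential moments, so $\bbE|X_N^2A_N-A_N^\sph|\to0$. Since $X_N\to1$ in probability by \eqref{e:overview-sphere-to-cube}, and $A_N$ is tight because $\bbE A_N$ is bounded, we get $A_N-A_N^\sph\to0$ in probability.

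The main obstacle is Step 1(i)–(ii): controlling $J_N(q)e^{tN^{1/3}q}$ outside the window $|q|\lesssim N^{-1/3}$ uniformly. This requires checking that the tail estimates of \cite{du2026fluctuations} have exponential slack, which is where $t_0$ being small enters.
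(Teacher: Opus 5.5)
Your proposal follows the paper's proof essentially step for step: the annealed comparison through the reweighted identities, local density comparison and lattice-to-integral replacement on $|q|\le MN^{-1/3}$ with exponential tail truncation, the feature-map embedding (the paper writes it explicitly via tensor powers) with the cross term reduced to $\bbE[\la\exp(tN^{1/3}R_{1,2})\ra^\sph]$ by rotational invariance, the $\|\bu\|^2-\|\bv\|^2$ Cauchy--Schwarz upgrade, symmetry in $t$, and $X_N\to1$. The one step to fix is the cube-side truncation in Step 1(ii), where the quantity to control is $\bbE[X_N^2\la\cdot\ra]$. The unweighted bounds $\bbE\la e^{sN^{1/3}|R_{1,2}|}\ra\le2$ cannot absorb the $X_N^2$ weight by Cauchy--Schwarz, since that would need $\sup_N\bbE X_N^4<\infty$ and only $\bbE(X_N-1)^2=O(N^{-1/3})$ is known. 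What you need instead is the reweighted localization $\sup_N\bbE[X_N^2\la e^{c_0N^{1/3}|R_{1,2}|}\ra]<\infty$, which the paper imports from \cite[Propositions~3.1--3.2]{du2026fluctuations} and applies directly via $e^{tN^{1/3}q}\le e^{-(c_0-|t|)M}e^{c_0N^{1/3}|q|}$ on $\{|q|>MN^{-1/3}\}$ with $t_0=c_0/2$.
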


The proofs of Proposition~\ref{p:single-f-test} and Theorem~\ref{t:sphere-to-cube} both rely crucially on the exponential tail estimates from \cite{du2026fluctuations} that we now recall.

\begin{ppn}[{\cite[Theorem 1.4(a), Corollary 1.7(b)]{du2026fluctuations}}]
  \label{p:exponential-moment-bound}
    There exists a universal constant $c>0$ such that for any $N\ge 2$,
    \begin{align*}
    &\mathbb{E}\langle \exp(cN^{1/3}|R_{1,2}|)\rangle \le 2\,, &\mathbb{E}\langle \exp(cN^{1/3}|R_{1,2}|)\rangle^\sph \le 2\,.
    \end{align*}
    Consequently, for any $x>0$, it holds that
    \begin{align*}
        &\mathbb{E}\langle \bbone\{N^{1/3}|R_{1,2}|>x\}\rangle\le 2\exp(-cx)\,,&\mathbb{E}\langle \bbone\{N^{1/3}|R_{1,2}|>x\}\rangle^\sph\le 2\exp(-cx)\,.
    \end{align*}
\end{ppn}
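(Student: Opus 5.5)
The first pair of inequalities is not proved here; we take it directly from \cite[Theorem 1.4(a)]{du2026fluctuations} for the Ising Gibbs measure and from \cite[Corollary 1.7(b)]{du2026fluctuations} for the spherical Gibbs measure. The only adjustment is to fix a single universal $c>0$ that works for both models. If the two cited results come with different constants $c_1$ and $c_2$, take $c=\min(c_1,c_2)$. This preserves both bounds, since $\exp(cN^{1/3}|R_{1,2}|)\le \exp(c_iN^{1/3}|R_{1,2}|)$ pointwise for $c\le c_i$. If the cited bounds hold only for $N\ge N_0$, the finitely many remaining $N$ are absorbed by shrinking $c$ further. Indeed, $|R_{1,2}|\le 1$, so as $c\downarrow 0$ we have $\exp(cN^{1/3}|R_{1,2}|)\to 1$ uniformly, for each fixed $N$.

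The consequence is a quenched-then-annealed Markov (Chernoff) bound. Fix $x>0$. Since $\bbone\{y>x\}\le e^{c(y-x)}$ for all real $y$, we have pointwise
\[
  \bbone\{N^{1/3}|R_{1,2}|>x\}\le e^{-cx}\exp\lt(cN^{1/3}|R_{1,2}|\rt)\,.
\]
Next, take the Gibbs average $\la\cdot\ra$ (respectively $\la\cdot\ra^\sph$) and then the expectation over $\bW$. Both operations are monotone and linear, so the first part of the statement gives
\[
  \bbE\la \bbone\{N^{1/3}|R_{1,2}|>x\}\ra\le 2e^{-cx}\,,
\]
and the same bound holds for the spherical model.

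No step here is a genuine obstacle; the substance lies entirely in the cited exponential moment bounds. For orientation, those bounds are proved in \cite{du2026fluctuations} through the reweighted moment identities \eqref{e:reweighted-mt}, combined with control of $J_N(q)$ on scales $|q|\gg N^{-1/3}$. In the present paper the proposition is used as a black box.
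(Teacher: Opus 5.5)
Your proposal is correct and matches the paper. The paper also takes the two exponential moment bounds directly from the cited results in \cite{du2026fluctuations}, and the tail bounds follow from the same pointwise Chernoff/Markov step you describe.

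Your constant adjustments (taking $c=\min(c_1,c_2)$, and shrinking $c$ to cover finitely many small $N$) are harmless but not needed given how the statement is cited. If a cited bound had a multiplicative constant $C>2$ instead of $2$, Jensen's inequality would give $\bbE\la e^{scN^{1/3}|R_{1,2}|}\ra\le C^s$ for $s\in(0,1]$, which reduces it to $2$ in the same way.
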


Assuming Propositions~\ref{p:single-f-test}-\ref{p:exponential-moment-bound}, we now proceed to prove Theorem~\ref{t:sphere-to-cube}. For simplicity,
define $\zeta_N,\zeta_N^\sph\in \mathcal P_2(\mathcal P_2(\bbR))$ as the respective laws of the random probability measures $\la \delta(N^{1/3}R_{1,2})\ra$ and $\la \delta(N^{1/3}R_{1,2})\ra^\sph$ over $\bW\sim \GOE(N)$.
We will prove $\cW_2(\zeta_N,\zeta_N^\sph)\to 0$ by a compactness argument, and we start with a lemma that allows us to take a subsequential limit.

\begin{lem}\label{l:relative-compactness}
    Both $\{\zeta_N\}$ and $\{\zeta_N^\sph\}$ are relatively compact with respect to $(\mathcal P_2(\mathcal P_2(\bbR)),\cW_2)$.
\end{lem}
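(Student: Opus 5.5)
Relative compactness in a Wasserstein space over a Polish space follows from two facts: tightness, and uniform integrability of the second moment. Concretely, a family $\{\zeta_N\}\subset\cP_2(\cP_2(\bbR))$ is relatively compact in $\cW_2$ if and only if two conditions hold. First, $\{\zeta_N\}$ is tight as a family of probability measures on the Polish space $(\cP_2(\bbR),\bbW_2)$. Second,
\[
\lim_{K\to\infty}\sup_N \bbE_{\mu\sim\zeta_N}\big[\bbW_2^2(\mu,\delta_0)\bbone\{\bbW_2^2(\mu,\delta_0)>K\}\big]=0,
\]
i.e.\ $\mu\mapsto\|\mu\|_{L^2}^2$ is uniformly integrable under $\zeta_N$. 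It therefore suffices to prove a uniform exponential-type moment bound, which we take from Proposition~\ref{p:exponential-moment-bound}. The argument is identical for $\zeta_N$ and $\zeta_N^\sph$, so we treat only $\zeta_N$.

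Let $\mu_N=\la\delta(N^{1/3}R_{1,2})\ra$ and define
\[
M_N=\int e^{c|x|}\,\mu_N(\de x)=\la\exp(cN^{1/3}|R_{1,2}|)\ra .
\]
By Proposition~\ref{p:exponential-moment-bound}, $\bbE M_N\le 2$ for all $N$.

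\textbf{Uniform integrability.} Write $\phi(x)=e^{cx/2}$. Then $\int x^4\,\de\mu_N\le C_c\int e^{c|x|/2}\,\de\mu_N\le C_c M_N^{1/2}$ by Jensen. Hence $\|\mu_N\|_{L^2}^2\le C M_N^{1/4}$, and a fortiori $\|\mu_N\|_{L^2}^2\le C'(1+\log(1+M_N))$ is not needed. More simply, $(\|\mu_N\|_{L^2}^2)^4\le C\int x^8\,\de\mu_N\le C''M_N$. So
\[
\bbE\big[\|\mu_N\|_{L^2}^{8}\big]\le 2C'',
\]
which is bounded uniformly in $N$. This fourth-power bound on $\|\mu_N\|_{L^2}^2$ gives uniform integrability of $\|\mu_N\|_{L^2}^2$ via Markov's inequality.

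\textbf{Tightness.} For $K>0$, define
\[
\cK_K=\Big\{\mu\in\cP_2(\bbR):\int e^{c|x|}\,\de\mu\le K\Big\}.
\]
This set is compact in $(\cP_2(\bbR),\bbW_2)$. Indeed, its elements have uniformly integrable second moments, so it is weakly tight; it is weakly closed by Fatou/lower semicontinuity of $\mu\mapsto\int e^{c|x|}\,\de\mu$; and on sets with uniformly integrable second moments, weak convergence upgrades to $\bbW_2$ convergence. Markov's inequality gives $\zeta_N(\cK_K^c)=\bbP(M_N>K)\le 2/K$ uniformly in $N$, which proves tightness.

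Combining tightness with uniform integrability yields the relative compactness in $\cW_2$ of both families.

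The only potential subtlety is the characterization of $\cW_2$-compactness on the non-locally-compact base space $\cP_2(\bbR)$. I expect this to be the main, if mild, point. Rather than invoke the characterization abstractly, we would verify it directly via the standard argument. By Prokhorov on the Polish space $(\cP_2(\bbR),\bbW_2)$, any sequence has a weakly convergent subsequence $\zeta_{N_j}\to\zeta$. By Skorokhod representation, $\bbW_2(\mu_{N_j},\mu)\to0$ almost surely. Uniform integrability of $\bbW_2^2(\mu_{N_j},\mu)\le 2\|\mu_{N_j}\|_{L^2}^2+2\|\mu\|_{L^2}^2$ then gives $\bbE\,\bbW_2^2(\mu_{N_j},\mu)\to0$, hence $\cW_2(\zeta_{N_j},\zeta)\to0$.
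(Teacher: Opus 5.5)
Your proof is correct and follows essentially the same route as the paper: tightness plus uniform integrability of $\mu\mapsto\|\mu\|_{L^2}^2$, both taken from the exponential moment bound of Proposition~\ref{p:exponential-moment-bound}, with only cosmetic differences (the paper uses closed $\bbW_4$-balls and a fourth-moment bound where you use exponential-moment sublevel sets and an eighth-moment bound, and it cites \cite[Theorem 6.9]{Villani2009} instead of rerunning the Skorokhod argument). Two small cleanups: delete the garbled ``a fortiori \dots $\log(1+M_N)$'' remark, which does not follow from the power bound, and in the last paragraph add that $\bbE\|\mu\|_{L^2}^2<\infty$ for the limit follows from Fatou's lemma, since this is needed for the uniform integrability of $2\|\mu_{N_j}\|_{L^2}^2+2\|\mu\|_{L^2}^2$.
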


\begin{proof}
Recall that $\cW_2$ is the Wasserstein-2 distance on the Polish space $(\mathcal P_2(\bbR),\bbW_2)$. By Prokhorov's theorem and the characterization of Wasserstein convergence (see, e.g., \cite[Theorem 6.9]{Villani2009}), relative compactness is equivalent to tightness together with uniform integrability of the second moments. We treat $\{\zeta_N\}$; the proof for $\{\zeta_N^\sph\}$ is identical.

By Proposition~\ref{p:exponential-moment-bound}, for any $M>0$ and $X$ a sample from $\mu_N$,
\[
\sup_{N}\mathbb{E}_{\mu_N\sim \zeta_N}\left[\mu_N(|X|\ge M)\right]\le 2\exp(-cM)\,.
\]
This implies that
\[
\sup_N\mathbb{E}_{\mu_N\sim \zeta_N}\left[\int x^4\mu_N(\de x)\right]\le \int_0^\infty 8M^3\exp(-cM)\de M<\infty\,.
\]
Consequently via Markov's inequality, as $K\to \infty$, uniformly over $N$ it holds
\[
\zeta_N(\mu_N:\bbW_4(\mu_N,\delta_0)>K)=\zeta_N\left(\mu_N:\int x^4\mu_N(\de x)>K^4\right)\le O(K^{-4})\to 0.
\]
Since the closed $\bbW_4$-ball $\oB_{\bbW_4}(\delta_0,K)$ is compact in $\bbW_2$ for any $K>0$, tightness of $\{\zeta_N\}$ follows.

For uniform integrability, we need to show that as $K\to\infty$,
\[
\sup_N\int_{\{\bbW_2(\mu_N,\delta_0)>K\}}\bbW_2(\mu_N,\delta_0)^2 \zeta_N(\de \mu_N)=\sup_N\int_{\mu_N:\int x^2 \mu_N(\de x)>K^2}\int x^2\mu_N(\de x)\zeta_N(\de\mu_N)\to 0\,.
\]
Indeed, by Chebyshev and Cauchy--Schwarz, uniformly over $N$ it holds
\balnn
\int_{\mu_N:\int x^2 \mu_N(\de x)>K^2}\int x^2\mu_N(\de x)\zeta_N(\de\mu_N)&\le K^{-2}\int \left(\int x^2\mu_N(\de x)\right)^2\zeta_N(\de\mu_N)\\
&\le K^{-2}\int \int x^4\mu_N(\de x)\zeta_N(\de\mu_N)=O(K^{-2})\,,
\ealnn
which tends to $0$ as $K\to\infty$. This proves the lemma.
\end{proof}

We are now ready to give the proof of Theorem~\ref{t:sphere-to-cube}. 

\begin{proof}[Proof of Theorem~\ref{t:sphere-to-cube}]
By Lemma~\ref{l:relative-compactness}, every subsequence of
$\{(\zeta_N,\zeta_N^\sph)\}$ admits a further subsequence, denoted by
$\{N_j\}$, such that
\[
\zeta_{N_j}\stackrel{\cW_2}{\longrightarrow}\zeta,
\qquad
\zeta_{N_j}^\sph\stackrel{\cW_2}{\longrightarrow}\zeta^\sph.
\]
We claim that $\zeta=\zeta^\sph$. Once this is established, the triangle
inequality gives
\[
\cW_2(\zeta_{N_j},\zeta_{N_j}^\sph)
\le
\cW_2(\zeta_{N_j},\zeta)
+
\cW_2(\zeta_{N_j}^\sph,\zeta^\sph)
\rightarrow 0.
\]
Since this holds along a further subsequence of every subsequence, the desired
convergence follows.

To prove the claim, let
$
\pi_N\in
\mathcal P_2\bigl(\mathcal P_2(\bbR)\times\mathcal P_2(\bbR)\bigr)
$
be the coupling of $\zeta_N$ and $\zeta_N^\sph$ obtained by using a common
$\bW\sim\GOE(N)$. By Lemma~\ref{l:relative-compactness}, both marginal
families are tight in $(\mathcal P_2(\bbR),\bbW_2)$, and hence $\{\pi_N\}$ is
tight in
$
\bigl(\mathcal P_2(\bbR)\times\mathcal P_2(\bbR),
\bbW_2\times\bbW_2\bigr).
$
This is a Polish space, so Prokhorov's theorem allows us, after passing to a
further subsequence, to assume that
\[
\pi_{N_j}
=
\Law((\mu_{N_j},\mu_{N_j}^\sph))
\Rightarrow
\pi
=
\Law((\mu,\mu^\sph))
\]
for some
$
\pi\in\mathcal P\bigl(\mathcal P_2(\bbR)\times\mathcal P_2(\bbR)\bigr).
$
Since $\cW_2$-convergence implies weak convergence and coordinate projections
are continuous, the marginals of $\pi$ are $\zeta$ and $\zeta^\sph$.
Therefore, it remains to show that $\pi$ is supported on the diagonal.

Fix $t\in\bbR$ such that
$
|t|<\min\{t_0,c\},
$
where $t_0$ and $c$ are the constants in
Propositions~\ref{p:single-f-test} and~\ref{p:exponential-moment-bound},
respectively. Proposition~\ref{p:single-f-test} gives
\begin{equation}\label{e:converge-in-probability}
\int e^{tx}\mu_{N_j}(\de x)
-
\int e^{tx}\mu_{N_j}^\sph(\de x)
=
\langle \exp(tN_j^{1/3}R_{1,2})\rangle
-
\langle \exp(tN_j^{1/3}R_{1,2})\rangle^\sph
\rightarrow0
\end{equation}
in probability.

Let $\delta>0$. By Proposition~\ref{p:exponential-moment-bound}, we may choose
$K=K(\delta)$ sufficiently large that
\begin{align}\label{e:tail-delta}
\mathbb E_{\mu_{N_j}\sim\zeta_{N_j}}
\int e^{tx}\bbone\{|x|>K\}\mu_{N_j}(\de x)
&<\delta^2,
&
\mathbb E_{\mu_{N_j}^\sph\sim\zeta_{N_j}^\sph}
\int e^{tx}\bbone\{|x|>K\}\mu_{N_j}^\sph(\de x)
&<\delta^2
\end{align}
uniformly in $j$. Choose $\eta\in C_c^\infty(\bbR)$ such that
$0\le\eta\le1$ and $\eta=1$ on $[-K,K]$, and define
\[
F(\nu,\nu')
=
\int e^{tx}\eta(x)\nu(\de x)
-
\int e^{tx}\eta(x)\nu'(\de x).
\]
Note that by Markov's inequality,
\begin{align*}
\pi_{N_j}\bigl(|F(\mu_{N_j},\mu_{N_j}^\sph)|>3\delta\bigr)
\le{}&
\pi_{N_j}\left(\left|\int e^{tx}\mu_{N_j}(\de x)
-\int e^{tx}\mu_{N_j}^\sph(\de x)\right|>\delta\right)\\
&+\frac{1}{\delta}\mathbb E_{\mu_{N_j}\sim \zeta_{N_j}}\int e^{tx}(1-\eta(x))\,\mu_{N_j}(\de x)\\
&+\frac{1}{\delta}\mathbb E_{\mu_{N_j}^\sph\sim \zeta_{N_j}^\sph}\int e^{tx}(1-\eta(x))\,\mu_{N_j}^\sph(\de x).
\end{align*}
Therefore, by \eqref{e:converge-in-probability}, \eqref{e:tail-delta}, 
\[
\limsup_{j\to\infty}
\pi_{N_j}\bigl(|F(\mu_{N_j},\mu_{N_j}^\sph)|>3\delta\bigr)
\le 2\delta.
\]

Since $e^{tx}\eta(x)$ is bounded and continuous, $F$ is continuous with
respect to the product $\bbW_2$-topology. Indeed, if $(\mu_n,\nu_n)\to(\mu,\nu)$ in the product $\bbW_2$-topology, then
$\mu_n\Rightarrow\mu$ and $\nu_n\Rightarrow\nu$. Since $e^{tx}\eta(x)$ is
bounded and continuous,
$
F(\mu_n,\nu_n)\rightarrow F(\mu,\nu).
$ Thus, by the continuous mapping
theorem,
\[
F(\mu_{N_j},\mu_{N_j}^\sph)
\Rightarrow
F(\mu,\mu^\sph).
\]
The Portmanteau theorem then gives
\begin{equation}\label{e:limit-probability}
\pi\bigl(|F(\mu,\mu^\sph)|>3\delta\bigr)
\le
\liminf_{j\to\infty}
\pi_{N_j}\bigl(|F(\mu_{N_j},\mu_{N_j}^\sph)|>3\delta\bigr)
\le 2\delta.
\end{equation}

The exponential-tail estimates also pass to the limiting marginals. Indeed,
for every $x>0$, the maps
\[
\nu\mapsto\nu((x,\infty))
\qquad\text{and}\qquad
\nu\mapsto\nu((-\infty,-x))
\]
are lower semicontinuous under weak convergence. Hence the same uniform
exponential-tail bounds hold under $\zeta$ and $\zeta^\sph$. Since $|t|<c$,
after increasing $K$ if necessary, we therefore have
\[
\mathbb E_{\mu\sim \zeta}
\int e^{tx}(1-\eta(x))\mu(\de x)
<\delta^2,
\qquad
\mathbb E_{\mu^\sph\sim \zeta^\sph}
\int e^{tx}(1-\eta(x))\mu^\sph(\de x)
<\delta^2.
\]
In particular, the corresponding Laplace transforms are finite
$\pi$-almost surely for $|t|<c$. Combining these estimates with
\eqref{e:limit-probability} and applying Markov's inequality yields
\[
\pi\left(
\left|
\int e^{tx}\mu(\de x)
-
\int e^{tx}\mu^\sph(\de x)
\right|>5\delta
\right)
\le 4\delta.
\]
Since $\delta>0$ is arbitrary, this implies
\[
\int e^{tx}\mu(\de x)
=
\int e^{tx}\mu^\sph(\de x)
\quad
\pi\text{-almost surely}.
\]

Let $D$ be a countable dense subset of
$
\bigl(-\min\{t_0,c\},\min\{t_0,c\}\bigr).
$
Taking the intersection of the preceding probability-one events over $t\in D$,
we obtain that, for $\pi$-almost every $(\mu,\mu^\sph)$,
\[
\int e^{tx}\mu(\de x)
=
\int e^{tx}\mu^\sph(\de x)
\qquad
\text{for every }t\in D.
\]
For each such pair, both Laplace transforms are finite and continuous on a
neighborhood of the origin. Since they agree on the dense set $D$, they agree
throughout that neighborhood. The uniqueness of the Laplace transform
therefore implies that
$
\mu=\mu^\sph.
$
Thus $\pi$ is supported on the diagonal, so its two marginals coincide:
$\zeta=\zeta^\sph$. This completes the proof.
\end{proof}

The rest of this section is devoted to proving Proposition~\ref{p:single-f-test}. Throughout we abbreviate
\begin{align*}
&Z_N:=Z_{N,\beta=1}=\frac{1}{2^N}\sum_{\bx\in \Sigma_N}e^{H_N(\bx)}\,, &Z_N^\sph:=Z_{N,\beta=1}^{\sph}=\int_{S_N}e^{H_N(\bx)}\de\nu_N(\bx)\,,
\end{align*}
and denote $X_N=Z_N/Z_N^\sph$. It was shown in \cite[Theorem 1.6]{du2026fluctuations} that $\mathbb{E}[(X_N-1)^2]\lesssim N^{-1/3}$ and thus $X_N\to 1$ in probability as $N\to\infty$.
Moreover, for simplicity, for \(t\in\mathbb{R}\), we write
\[
L_N(t)
    := \left\langle \exp\bigl(tN^{1/3}R_{1,2}\bigr)\right\rangle,
\qquad
L_N^{\sph}(t)
    := \left\langle \exp\bigl(tN^{1/3}R_{1,2}\bigr)\right\rangle^{\sph}.
\]
We will show that for small enough $|t|$,
\begin{equation}\label{e:weighted-convergence-in-probability}
X_N^2L_N(t)-L_N^\sph(t)\rightarrow 0
\end{equation}
in probability as $N\to\infty$. This together with the facts that $X_N\to 1$ in probability and $L_N(t),L_N^\sph(t)$ are bounded in probability implies $L_N(t)-L_N^\sph(t)\to 0$ in probability for all small enough $t$, as desired in Proposition~\ref{p:single-f-test}.

Towards proving \eqref{e:weighted-convergence-in-probability}, we first
establish an annealed comparison, and then upgrade it to the quenched sense
via a vector-embedding argument. These two main steps are presented in \S\ref{ss:annealed-convergence} and \S\ref{ss:quenched-convergence} below, and the proof of Proposition~\ref{p:single-f-test} is completed in \S\ref{ss:proof-of-single-test}. 

\subsection{Annealed convergence}\label{ss:annealed-convergence}

We first prove the annealed version of
\eqref{e:weighted-convergence-in-probability}.

\begin{lem}\label{l:annealed-convergence}
There exists a universal constant \(t_0>0\) such that, for every
\(t\in(-t_0,t_0)\),
\[
\mathbb{E}\bigl[X_N^2L_N(t)-L_N^\sph(t)\bigr]
\rightarrow 0
\qquad\text{as }N\to\infty.
\]
\end{lem}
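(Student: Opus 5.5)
By \eqref{e:reweighted-mt} applied with $g(q)=e^{tN^{1/3}q}$, the quantity in Lemma~\ref{l:annealed-convergence} equals
\[
  \bbE_q\big[J_N(q)e^{tN^{1/3}q}\big]-\bbE^\sph_q\big[J_N(q)e^{tN^{1/3}q}\big]\,.
\]
So the plan is to reprove the comparison behind \eqref{e:overview-sphere-to-cube} from \cite{du2026fluctuations}, carrying the extra weight $e^{tN^{1/3}q}$ along. The proof splits into three steps.

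\textbf{Truncation.} I split both expectations according to whether $|q|\le KN^{-1/3}$ or $|q|>KN^{-1/3}$, with $K$ large and fixed.
For the outer region, I use \eqref{e:reweighted-mt} in reverse with $g(q)=e^{tN^{1/3}q}\bbone\{|q|>KN^{-1/3}\}$. This rewrites the outer contributions as $\bbE[X_N^2\la e^{tN^{1/3}R_{1,2}}\bbone\{N^{1/3}|R_{1,2}|>K\}\ra]$ and its spherical analogue.
\begin{itemize}
  \item The spherical term is at most $e^{-(c-|t|)K}$ times a constant, by Proposition~\ref{p:exponential-moment-bound}, provided $|t|<c$.
  \item The Ising term carries the factor $X_N^2$, so Proposition~\ref{p:exponential-moment-bound} does not apply directly. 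I will bound it directly by the tail of $\bbE_q[J_N(q)e^{(c/2)N^{1/3}|q|}]$ instead.
\end{itemize}
For that direct bound I need the estimates on $J_N(q)$ from \cite{du2026fluctuations}. There, $J_N(q)\le C\exp(\frac{N}{2}q^2-c'N^{2/3}q^2\wedge\ldots)$ type bounds show that $\bbE_q J_N(q)e^{s N^{1/3}|q|}$ is bounded uniformly in $N$ for small $s$; this is precisely what gives $\bbE[X_N^2]=O(1)$ with exponential tails. Assuming those bounds and choosing $t_0$ small relative to their exponent, both outer contributions are $O(e^{-c''K})$ uniformly in $N$.

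\textbf{Inner region.} For $|q|\le KN^{-1/3}$, I compare the binomial mass $p_N(q)$, summed over the lattice of spacing $2/N$, with the integral against $\rho_N(q)$. Both equal $\sqrt{N/2\pi}\,e^{-Nq^2/2}(1+O(Nq^4+q^2+N^{-1}))\cdot(\text{spacing})$, so their relative errors are $O(K^4N^{-1/3})$.

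Replacing the lattice sum by the integral also needs regularity of $J_N$ on the scale $1/N$. Here I need $|J_N'(q)|/J_N(q)=O(N^{2/3})$ for $|q|\lesssim N^{-1/3}$, which I expect to extract from the representation of $J_N$ in \cite{du2026fluctuations}. The factor $e^{tN^{1/3}q}$ has logarithmic derivative $O(N^{1/3})$, which is harmless at spacing $2/N$. Granting both, the inner-region difference is $O(K^cN^{-1/3})$ times $\bbE^\sph_q[J_Ne^{tN^{1/3}q}]$, and the latter is $O(1)$ by Proposition~\ref{p:exponential-moment-bound}.

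\textbf{Conclusion.} Letting $N\to\infty$ and then $K\to\infty$ gives the lemma.

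The main obstacle is the outer Ising region: I need exponential control of $\bbE_q[J_N(q)e^{sN^{1/3}|q|}]$ under the cube overlap law, including the range of large $|q|$. My first attempt is to reuse verbatim the bounds from the proof of \cite[Theorem 1.6]{du2026fluctuations}, noting that those bounds already absorb a Gaussian-in-$N^{1/3}q$ margin, so that an $e^{sN^{1/3}|q|}$ weight costs only a constant.
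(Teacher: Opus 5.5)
Your proposal is correct and matches the paper's proof. After the reweighted identities, the paper splits at $|q|\le MN^{-1/3}$ and controls both tails by $\sup_N \bigl(\bbE[X_N^2\la e^{c_0N^{1/3}|R_{1,2}|}\ra] + \bbE[\la e^{c_0N^{1/3}|R_{1,2}|}\ra^\sph]\bigr)\le C$, taking $t_0=c_0/2$; the Ising half of this bound is imported from \cite[Propositions~3.1--3.2]{du2026fluctuations}, which is exactly the input you flag as the main obstacle. In the central region it compares each lattice point with an interval of length $2/N$, factoring $J_N(q)=e^{Nq^2/2}e^{N/2}K_N(q)$ and using the scale-$1/N$ Lipschitz bound on $\log K_N$ from \cite[Corollary~3.4]{du2026fluctuations}; this supplies your requested $|J_N'|/J_N=O(N^{2/3})$ for $|q|\lesssim N^{-1/3}$, and the remaining difference from your route (comparing the tilted densities $p_N(q)e^{Nq^2/2}$ and $\rho_N(q)e^{Nq^2/2}$ rather than $p_N$ and $\rho_N$) is cosmetic.
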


\begin{proof}
We follow the sphere-to-cube comparison argument from
\cite[\S3]{du2026fluctuations}. Let
\[
\mathcal Q_N
    :=\left\{-1,-1+\frac{2}{N},\ldots,1-\frac{2}{N},1\right\}
\]
be the set of possible overlaps between two points in \(\Sigma_N\), and
write
\[
p_N(q)
    :=2^{-N}\binom{N}{N(1+q)/2},
    \qquad q\in\mathcal Q_N,
\]
for the overlap distribution of two independent uniform points in
\(\Sigma_N\). Likewise, let
\[
\rho_N(q)
    :=\frac{\Gamma(N/2)}
            {\sqrt{\pi}\Gamma((N-1)/2)}
      (1-q^2)^{(N-3)/2},
    \qquad q\in[-1,1],
\]
be the overlap density of two independent uniform points in \(S_N\).

For \(q\in[-1,1]\), choose any
\(\bx,\by\in S_N\) satisfying
\(R(\bx,\by)=q\), and define
\[
J_N(q)
    :=\mathbb{E}\left[
        \frac{
        e^{H_N(\bx)+H_N(\by)}
        }{
        (Z_N^\sph)^2
        }
      \right].
\]
By rotational invariance, \(J_N(q)\) depends on
\(\bx,\by\) only through their overlap \(q\). The reweighted
overlap identities of \cite[Lemma~2.1]{du2026fluctuations} give
\begin{align}
\mathbb{E}\bigl[X_N^2L_N(t)\bigr]
    &=
      \sum_{q\in\mathcal Q_N}
      p_N(q)J_N(q)e^{tN^{1/3}q},
      \label{e:weighted-cube-identity}
      \\
\mathbb{E}\bigl[L_N^\sph(t)\bigr]
    &=
      \int_{-1}^{1}
      \rho_N(q)J_N(q)e^{tN^{1/3}q}\,\de q.
      \label{e:weighted-spherical-identity}
\end{align}

We recall two ingredients from the sphere-to-cube comparison. First,
\(J_N\) admits the factorization
\begin{equation}\label{e:J-factorization}
J_N(q)=e^{Nq^2/2}e^{N/2}K_N(q),
\end{equation}
where \(K_N\) satisfies
\begin{equation}\label{e:K-local-regularity}
\left|
\log K_N(q)-\log K_N(\widetilde q)
\right|
\leq
N\bigl(|q|+N^{-1}\bigr)|q-\widetilde q|
\end{equation}
whenever \(q,\widetilde q\in[-1,1]\) and
\(|q-\widetilde q|\leq N^{-1}\); see
\cite[Corollary~3.4]{du2026fluctuations}. Second, the local density
comparison in \cite[Lemma~3.9]{du2026fluctuations} states that, uniformly
over \(q\in\mathcal Q_N\) with \(|q|=o(N^{-1/4})\) and
\(|\widetilde q-q|\leq N^{-1}\),
\begin{equation}\label{e:local-density-comparison}
p_N(q)e^{Nq^2/2}
=
\frac{2}{N}
\rho_N(\widetilde q)e^{N\widetilde q^2/2}
\left(
1+O\bigl(N^{-1}+q^2+Nq^4\bigr)
\right).
\end{equation}

By \cite[Propositions~3.1--3.2]{du2026fluctuations} and
Proposition~\ref{p:exponential-moment-bound}, there exist universal constants
\(c_0,C>0\) such that
\begin{equation}\label{e:annealed-exponential-localization}
\sup_{N\geq1}
\left\{
\mathbb{E}\left[
X_N^2
\left\langle
e^{c_0N^{1/3}|R_{1,2}|}
\right\rangle
\right]
+
\mathbb{E}\left[
\left\langle
e^{c_0N^{1/3}|R_{1,2}|}
\right\rangle^\sph
\right]
\right\}
\leq C.
\end{equation}
Set \(t_0:=c_0/2\), and fix \(t\in(-t_0,t_0)\).

For \(M>0\), define
\[
A_{N,M}
    :=\mathcal Q_N\cap
      \bigl[-MN^{-1/3},MN^{-1/3}\bigr],
\qquad
I_{N,M}
    :=\bigcup_{q\in A_{N,M}}
      \bigl[q-N^{-1},q+N^{-1}\bigr].
\]
We first compare the two expressions in
\eqref{e:weighted-cube-identity}--\eqref{e:weighted-spherical-identity}
on this central region. Fix \(M\). Uniformly over
\(q\in A_{N,M}\) and \(|\widetilde q-q|\leq N^{-1}\), we have
\[
N^{-1}+q^2+Nq^4=O_M(N^{-1/3})=o(1),
\]
and \eqref{e:K-local-regularity} gives
\[
\left|
\log K_N(q)-\log K_N(\widetilde q)
\right|
\leq
N\bigl(MN^{-1/3}+N^{-1}\bigr)N^{-1}
=o(1).
\]
Moreover,
\[
\left|
tN^{1/3}(q-\widetilde q)
\right|
\leq |t|N^{-2/3}=o(1).
\]
Combining these estimates with
\eqref{e:J-factorization} and
\eqref{e:local-density-comparison}, we obtain, uniformly over
\(q\in A_{N,M}\),
\begin{align}
p_N(q)J_N(q)e^{tN^{1/3}q}
&=
\bigl(1+o(1)\bigr)
\int_{q-N^{-1}}^{q+N^{-1}}
\rho_N(\widetilde q)J_N(\widetilde q)
e^{tN^{1/3}\widetilde q}\,\de\widetilde q.
\label{e:weighted-local-comparison}
\end{align}
The \(o(1)\) here may depend on \(M\) and \(t\), but is uniform in
\(q\in A_{N,M}\). Summing
\eqref{e:weighted-local-comparison} over \(q\in A_{N,M}\), and using
\eqref{e:annealed-exponential-localization}, yields
\begin{equation}\label{e:central-weighted-comparison}
\left|
\sum_{q\in A_{N,M}}
p_N(q)J_N(q)e^{tN^{1/3}q}
-
\int_{I_{N,M}}
\rho_N(q)J_N(q)e^{tN^{1/3}q}\,\de q
\right|
=o(1).
\end{equation}

It remains to control the tails. On
\(\{|q|>MN^{-1/3}\}\), we have
\[
e^{tN^{1/3}q}
\leq
e^{|t|N^{1/3}|q|}
\leq
e^{-(c_0-|t|)M}
e^{c_0N^{1/3}|q|}.
\]
Consequently, by
\eqref{e:weighted-cube-identity} and
\eqref{e:annealed-exponential-localization},
\begin{equation}\label{e:cube-weighted-tail}
\sum_{q\in\mathcal Q_N\setminus A_{N,M}}
p_N(q)J_N(q)e^{tN^{1/3}q}
\leq
Ce^{-(c_0-|t|)M}.
\end{equation}
Similarly, every \(q\in[-1,1]\setminus I_{N,M}\) satisfies
\[
|q|\geq MN^{-1/3}-N^{-1}.
\]
It follows from
\eqref{e:weighted-spherical-identity} and
\eqref{e:annealed-exponential-localization} that
\begin{equation}\label{e:spherical-weighted-tail}
\int_{[-1,1]\setminus I_{N,M}}
\rho_N(q)J_N(q)e^{tN^{1/3}q}\,\de q
\leq
C\exp\left(
-(c_0-|t|)
\bigl(M-N^{-2/3}\bigr)
\right).
\end{equation}

Combining
\eqref{e:central-weighted-comparison}--\eqref{e:spherical-weighted-tail},
we obtain
\[
\limsup_{N\to\infty}
\left|
\mathbb{E}\bigl[X_N^2L_N(t)\bigr]
-
\mathbb{E}\bigl[L_N^\sph(t)\bigr]
\right|
\leq
2Ce^{-(c_0-|t|)M}.
\]
Letting \(M\to\infty\) proves the lemma.
\end{proof}

\subsection{Hilbert-space embedding}\label{ss:quenched-convergence}

We next upgrade Lemma~\ref{l:annealed-convergence} to a quenched
convergence statement by means of a Hilbert-space embedding. Since
\(H_N(-\bx)=H_N(\bx)\) and both reference measures are invariant
under \(\bx\mapsto-\bx\), we have
\[
L_N(-t)=L_N(t),
\qquad
L_N^\sph(-t)=L_N^\sph(t).
\]
It therefore suffices to consider \(t\in[0,t_0)\).

For each \(N\geq 1\), let
\[
\mathcal H_N
:=
\bigoplus_{k=0}^{\infty}(\mathbb R^N)^{\otimes k}
\]
be the Hilbert direct sum of the tensor powers of \(\mathbb R^N\), where
\((\mathbb R^N)^{\otimes 0}:=\mathbb R\). On each tensor power, we use the
inner product determined by
\[
\left(
\bx_1\otimes\cdots\otimes\bx_k,\,
\by_1\otimes\cdots\otimes\by_k
\right)
:=
\prod_{j=1}^k
\big(\bx_j,\by_j\big)_{\mathbb R^N}.
\]
Thus, if \(\bu=(\bu_k)_{k\geq0}\) and
\(\bv=(\bv_k)_{k\geq0}\) belong to \(\mathcal H_N\), then
\[
\left(\bu,\bv\right)_{\mathcal H_N}
:=
\sum_{k=0}^{\infty}
\left(\bu_k,\bv_k\right)_{(\mathbb R^N)^{\otimes k}}.
\]

For \(t\geq0\), define the map
\[
\Phi_{N,t}:\mathbb R^N\rightarrow\mathcal H_N,\quad 
\bx\mapsto
\bigoplus_{k=0}^{\infty}
\frac{t^{k/2}N^{-k/3}}{\sqrt{k!}}\,
\bx^{\otimes k},
\]
where \(\bx^{\otimes0}:=1\). For
\(\bx,\by\in S_N\), and in particular for
\(\bx,\by\in\Sigma_N\), this embedding satisfies
\begin{align}
\left(
\Phi_{N,t}(\bx),\Phi_{N,t}(\by)
\right)_{\mathcal H_N}
&=
\sum_{k=0}^{\infty}
\frac{t^kN^{-2k/3}}{k!}
\left(\bx,\by\right)^k
\nonumber\\
&=
\exp(
tN^{-2/3}\left(\bx,\by)
\right)
=
\exp(tN^{1/3}R(\bx,\by)).
\label{e:exponential-kernel-identity}
\end{align}
For \(t\geq 0\), define the Hilbert-space barycenters
\[
\bm_N(t)
    :=\left\langle\Phi_{N,t}(\bx)\right\rangle,
\qquad
\bm_N^\sph(t)
    :=\left\langle\Phi_{N,t}(\bx)\right\rangle^\sph.
\]
The key to our proof is the next lemma. 

\begin{lem}\label{l:hilbert-barycenter-convergence}
For every \(t\in[0,t_0)\), 
\begin{align*}
&\|\bm_N(t)\|_{\mathcal H_N}^2=L_N(t)\,,
&\|\bm^\sph_N(t)\|_{\mathcal H_N}^2=L^\sph_N(t)\,.
\end{align*}
Moreover, 
\[
\mathbb E\left[
\left\|
X_N\bm_N(t)-\bm_N^\sph(t)
\right\|_{\mathcal H_N}^2
\right]\to 0\qquad\text{as }N\to\infty\,.
\]
\end{lem}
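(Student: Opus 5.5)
The plan is to first verify that the barycenters are well defined in $\mathcal H_N$, then expand the squared norm, and finally identify each term with quantities already controlled by Lemma~\ref{l:annealed-convergence}.

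\textbf{Well-definedness and the norm identities.} For $\bx\in S_N$ (which includes $\Sigma_N$), \eqref{e:exponential-kernel-identity} gives
\[
\|\Phi_{N,t}(\bx)\|_{\mathcal H_N}^2=e^{tN^{1/3}}<\infty .
\]
So $\Phi_{N,t}$ is bounded on the relevant supports. It is also measurable, since each tensor component is a polynomial in $\bx$. Hence $\bm_N(t)$ and $\bm_N^\sph(t)$ exist as Bochner integrals: a finite sum for the cube and a genuine integral for the sphere. Because the Bochner integral commutes with bounded linear functionals, for two independent replicas we get
\[
\|\bm_N(t)\|^2=\big\langle(\Phi_{N,t}(\bx^1),\Phi_{N,t}(\bx^2))\big\rangle=\big\langle e^{tN^{1/3}R_{1,2}}\big\rangle=L_N(t),
\]
and likewise $\|\bm_N^\sph(t)\|^2=L_N^\sph(t)$.

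\textbf{Expanding the square.} We have
\[
\mathbb E\|X_N\bm_N-\bm_N^\sph\|^2=\mathbb E[X_N^2L_N(t)]-2\,\mathbb E[X_N(\bm_N,\bm_N^\sph)]+\mathbb E[L_N^\sph(t)].
\]
The key claim is that the cross term equals $\mathbb E[L_N^\sph(t)]$. Once that holds, the whole expression reduces to $\mathbb E[X_N^2L_N(t)-L_N^\sph(t)]$, which tends to $0$ by Lemma~\ref{l:annealed-convergence}. Note that $t\in[0,t_0)$ is needed here for that lemma.

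\textbf{The cross term.} Write
\[
X_N(\bm_N,\bm_N^\sph)=\frac{1}{(Z_N^\sph)^2}\,2^{-N}\sum_{\bx\in\Sigma_N}\int_{S_N}e^{H_N(\bx)+H_N(\by)}e^{tN^{1/3}R(\bx,\by)}\,\de\nu_N(\by).
\]
Take the expectation and use rotational invariance of the law of $\bW$. For fixed $\bx\in\Sigma_N\subset S_N$, replacing $\bW$ by $O\bW O^\top$ and averaging over Haar $O$ shows that
\[
\mathbb E\Big[(Z_N^\sph)^{-2}\int_{S_N} e^{H_N(\bx)+H_N(\by)+tN^{1/3}R(\bx,\by)}\,\de\nu_N(\by)\Big]
\]
is the same for every $\bx\in S_N$. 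This uses that $Z_N^\sph$ is rotation invariant and that the $\by$-integral is over the invariant measure $\nu_N$. We may therefore replace the uniform average over $\bx\in\Sigma_N$ by an average over $\bx\sim\nu_N$, which gives exactly $\mathbb E[L_N^\sph(t)]$. This is the same mechanism that gives $\mathbb E X_N=1$. Equivalently, it is the spherical identity in \eqref{e:reweighted-mt} applied to the mixed pair, with $\bbE_q$ replaced by the $\bx$-cube/$\by$-sphere overlap law, and that overlap law coincides with $\rho_N$ by invariance of $\nu_N$.

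\textbf{Main obstacle.} The step I expect to be delicate is justifying the Fubini and integrability manipulations in the cross term, in particular finiteness of $\mathbb E[X_N^2L_N]$. For fixed $N$ this is harmless: $e^{tN^{1/3}R}\le e^{tN^{1/3}}$ and $X_N^2\le \max_{\bx} e^{2H_N(\bx)}/(Z_N^\sph)^2$. It then suffices to check that $\mathbb E[e^{2\|\bW\|N}(Z_N^\sph)^{-2}]<\infty$, which follows from the Gaussian tails of $\|\bW\|$ together with Jensen's lower bound $Z_N^\sph\ge e^{\int H_N\,\de\nu_N}=e^{\Tr \bW/2}$. The remaining steps are bookkeeping.
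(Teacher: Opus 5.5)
Your proposal is correct and follows the paper's proof essentially step for step. The kernel identity \eqref{e:exponential-kernel-identity} gives the two norm identities. After expanding the square, the rotational-invariance argument (the mixed integrand is constant over $\bx\in S_N$) identifies the cross term with $\mathbb{E}[L_N^\sph(t)]$, and Lemma~\ref{l:annealed-convergence} finishes. Your Bochner-integral and fixed-$N$ integrability checks are sound, and they supply details the paper leaves implicit.
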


\begin{proof}

By \eqref{e:exponential-kernel-identity},
\begin{align*}
\left\|\bm_N(t)\right\|_{\mathcal H_N}^2
&=
\left\langle
\left(
\Phi_{N,t}(\bx^1),
\Phi_{N,t}(\bx^2)
\right)_{\mathcal H_N}
\right\rangle
=
\langle
\exp(tN^{1/3}R_{1,2})
\rangle
=
L_N(t),
\end{align*}
and similarly,
\begin{equation*}
\|\bm_N^\sph(t)\|_{\mathcal H_N}^2
=
L_N^\sph(t).
\end{equation*}
Expanding the square gives
\begin{align*}
\mathbb E\left[
\left\|
X_N\bm_N(t)-\bm_N^\sph(t)
\right\|_{\mathcal H_N}^2
\right]
&=
\mathbb E\left[
X_N^2\left\|\bm_N(t)\right\|_{\mathcal H_N}^2
-2X_N
\left(
\bm_N(t),\bm_N^\sph(t)
\right)_{\mathcal H_N}
+\left\|\bm_N^\sph(t)\right\|_{\mathcal H_N}^2\right]\\
&=\mathbb{E}[X_N^2L_N(t)]+\mathbb{E}[L_N^\sph(t)]-2\mathbb{E}\left[X_N
\left(
\bm_N(t),\bm_N^\sph(t)
\right)_{\mathcal H_N}\right].
\end{align*}
For the mixed term, using
\(X_N=Z_N/Z_N^\sph\) we have
\begin{align*}
&\mathbb E\left[
X_N
\left(
\bm_N(t),\bm_N^\sph(t)
\right)_{\mathcal H_N}
\right]=
\frac{1}{2^N}
\sum_{\bx\in\Sigma_N}
\int_{S_N}
\mathbb E\left[
\frac{
e^{H_N(\bx)+H_N(\by)}
}{
(Z_N^\sph)^2
}
\right]
e^{tN^{1/3}R(\bx,\by)}
\,\de\nu_N(\by).
\end{align*}
For \(\bx\in S_N\), set
\[
F_{N,t}(\bx)
:=
\int_{S_N}
\mathbb E\left[
\frac{
e^{H_N(\bx)+H_N(\by)}
}{
(Z_N^\sph)^2
}
\right]
e^{tN^{1/3}R(\bx,\by)}
\,\de\nu_N(\by).
\]
By rotational invariance of the disorder and of \(\nu_N\),
\[
F_{N,t}(U\bx)=F_{N,t}(\bx)
\qquad
\text{for every }U\in O(N).
\]
Since \(O(N)\) acts transitively on \(S_N\), the function \(F_{N,t}\)
is constant on \(S_N\). Therefore,
\begin{align*}
&\ \frac{1}{2^N}\sum_{\bx\in\Sigma_N}F_{N,t}(\bx)
=
\int_{S_N}F_{N,t}(\bx)\,\de\nu_N(\bx)
\nonumber\\
=&\ 
\mathbb E\left[
\frac{1}{(Z_N^\sph)^2}
\int_{S_N^2}
e^{H_N(\bx)+H_N(\by)}
e^{tN^{1/3}R(\bx,\by)}
\,\de\nu_N(\bx)\de\nu_N(\by)
\right]=
\mathbb E\left[L_N^\sph(t)\right].
\end{align*}
Altogether we obtain
\begin{align*}
\mathbb E\left[
\left\|
X_N\bm_N(t)-\bm_N^\sph(t)
\right\|_{\mathcal H_N}^2
\right]
&=
\mathbb E\left[X_N^2L_N(t)\right]
-\mathbb E\left[L_N^\sph(t)\right].
\end{align*}
The right-hand side converges to zero by
Lemma~\ref{l:annealed-convergence}, which completes the proof.
\end{proof}

\subsection{Quenched convergence}\label{ss:proof-of-single-test}

\begin{proof}[Proof of Proposition~\ref{p:single-f-test}]
Fix \(t\in[0,t_0)\), and abbreviate
\[
\bm_N:=\left\langle\Phi_{N,t}(\bx)\right\rangle,
\qquad
\bm_N^\sph
:=\left\langle\Phi_{N,t}(\bx)\right\rangle^\sph.
\]
By Lemma~\ref{l:hilbert-barycenter-convergence},
\[
X_N^2L_N(t)=\|X_N\bm_N\|_{\mathcal H_N}^2,
\qquad
L_N^\sph(t)=\|\bm_N^\sph\|_{\mathcal H_N}^2.
\]
Using
\[
\bigl|\|\bu\|^2-\|\bv\|^2\bigr|
\leq
\|\bu-\bv\|(\|\bu\|+\|\bv\|),
\]
followed by the Cauchy--Schwarz inequality, we obtain
\begin{align*}
\mathbb E\left[
\left|X_N^2L_N(t)-L_N^\sph(t)\right|
\right]
&\leq
\mathbb E\left[
\left\|X_N\bm_N-\bm_N^\sph\right\|_{\mathcal H_N}
\left(
\|X_N\bm_N\|_{\mathcal H_N}
+\|\bm_N^\sph\|_{\mathcal H_N}
\right)
\right]
\\
&\leq
\left(
\mathbb E
\left\|X_N\bm_N-\bm_N^\sph\right\|_{\mathcal H_N}^2
\right)^{1/2}
\left(
\mathbb E
\left(
\|X_N\bm_N\|_{\mathcal H_N}
+\|\bm_N^\sph\|_{\mathcal H_N}
\right)^2
\right)^{1/2}.
\end{align*}
By Lemma~\ref{l:hilbert-barycenter-convergence}, the first factor
is \(o(1)\). For the second factor, using
\((a+b)^2\leq 2a^2+2b^2\), we have
\begin{align*}
\mathbb E
\left(
\|X_N\bm_N\|_{\mathcal H_N}
+\|\bm_N^\sph\|_{\mathcal H_N}
\right)^2
&\leq
2\mathbb E\left[X_N^2L_N(t)\right]
+2\mathbb E\left[L_N^\sph(t)\right]
=O(1),
\end{align*}
where the last bound follows from
\eqref{e:annealed-exponential-localization}.
Consequently,
\[
\mathbb E\left[
\left|X_N^2L_N(t)-L_N^\sph(t)\right|
\right]
\rightarrow 0\qquad\text{as }N\to\infty\,.
\]
In particular, by Markov's inequality,
\[
X_N^2L_N(t)-L_N^\sph(t)
\rightarrow0
\]
in probability.
By the symmetry \(L_N(-t)=L_N(t)\) and
\(L_N^\sph(-t)=L_N^\sph(t)\), the same conclusion holds for every
\(t\in(-t_0,t_0)\).
Finally, using
\[
L_N(t)-L_N^\sph(t)
=
\bigl(X_N^2L_N(t)-L_N^\sph(t)\bigr)
-(X_N^2-1)L_N(t),
\]
combined with the facts that \(X_N\to1\) in probability and \(L_N(t)\) is bounded in probability,
we conclude that
\[
L_N(t)-L_N^\sph(t)\rightarrow0
\]
in probability.
\end{proof}

\section{Identification of the overlap distribution}
\label{s:overlap}

\subsection{Convergence of the augmented edge data}
\label{ss:augmented-edge-convergence}

The goal of this subsection is to prove the following proposition:

\begin{ppn}\label{p:augmented-edge-convergence}
  Recall the quantities $\chi_{N,k}$, $d_{N,k}$, $\Delta_N$, and $a_{N,k}$ from \S\ref{ss:intro-spherical}.
  On the probability-one event that the GOE spectrum is simple, set
  \[
    \Xi_N=\sum_{k=2}^N\fr1{d_{N,k}}-N^{1/3},
    \qquad
    b_N=(0,d_{N,2}^{-1},\ldots,d_{N,N}^{-1},0,\ldots)\in\ell^2.
  \]
  On the null event of a repeated eigenvalue, define these objects arbitrarily.
  On the Airy side, put
  \[
    \Xi=\Xi(\chi),
    \qquad
    b=(0,d_2^{-1},d_3^{-1},\ldots).
  \]
  Then, as $N\to\infty$,
  \beq\label{e:augmented-edge-convergence}
    (\Xi_N,b_N)\Rightarrow (\Xi(\chi),b)
    \qquad\text{in }\bbR\times\ell^2.
  \eeq
  Consequently, with $a_N=(a_{N,1},\ldots,a_{N,N},0,\ldots)$,
  \beq\label{e:coefficient-l2-convergence}
    a_N\Rightarrow a(\chi)
    \qquad\text{in }\ell^2.
  \eeq
\end{ppn}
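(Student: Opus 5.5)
The proof has two parts. The main work is the joint convergence \eqref{e:augmented-edge-convergence}; the consequence \eqref{e:coefficient-l2-convergence} then follows by a continuous-mapping argument.

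\textbf{Main step: truncation.} We approximate both sides by functionals of finitely many edge points. Fix a large cutoff $K$ and split
\[
  \Xi_N = \Big(\sum_{k=2}^K \fr1{d_{N,k}} - \fr1\pi\int_0^{t_K}x^{-1/2}\,\de x\Big) + \Big(\sum_{k>K}\fr1{d_{N,k}} - N^{1/3} + \fr1\pi\int_0^{t_K}x^{-1/2}\,\de x\Big).
\]
\begin{itemize}
  \item The first bracket is a continuous function of $(\chi_{N,1},\ldots,\chi_{N,K})$ on the event that these points are distinct. By \eqref{e:airy-convergence}, together with simplicity of the Airy process and convergence of the ordered extreme points, it converges jointly in law to the analogous Airy quantity.
  \item The Airy analogue of the second bracket tends to $0$ almost surely as $K\to\infty$, by Proposition~\ref{p:Xi}.
  \item The finite-$N$ second bracket is the key estimate. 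We must show
  \[
    \lim_{K\to\infty}\limsup_{N\to\infty}\bbP(|\text{remainder}|>\eps)=0 .
  \]
  This is exactly the finite-$N$ input behind \cite[Theorem 6.1]{landon2022fluctuations}. I would invoke their estimates, obtained from eigenvalue rigidity and the semicircle law near the edge: $\sum_{k>K}(\lambda_1-\lambda_k)^{-1}N^{-2/3}$ compared with $N^{1/3}-\fr1\pi\int_0^{t_K}$. If those estimates are not directly citable in this form, the fallback is the following. Use rigidity $|\lambda_k-\gamma_k|\le N^{-2/3+\eps}\hat k^{-1/3}$ with high probability to replace $d_{N,k}$ by classical locations. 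Then compute $\sum_{k>K}N^{-2/3}(2-\gamma_k)^{-1}$ via the semicircle density, which gives $N^{1/3}-\fr1\pi\int_0^{t_K}x^{-1/2}\,\de x + o_K(1)$. Finally, control the fluctuation error in $L^1$ using a second-moment bound on mesoscopic linear statistics.

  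I expect this step to be the main obstacle, since it is a borderline-divergent sum where only the centered combination converges.
\end{itemize}

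\textbf{Step 2: the $\ell^2$ tail of $b_N$.} By the same reasoning,
\[
  \sum_{k>K}d_{N,k}^{-2}\lesssim \sum_{k>K}k^{-4/3},
\]
which holds with high probability uniformly in $N$ via rigidity. So the $\ell^2$ tail of $b_N$ beyond $K$ is small in probability, and likewise for $b$ by Proposition~\ref{p:limit-distribution}\ref{i:limit-distribution-chi}. The first $K$ coordinates of $b_N$ converge jointly with the first bracket of Step 1. A standard approximation lemma for weak convergence (Billingsley's Theorem 3.2) then yields \eqref{e:augmented-edge-convergence}.

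\textbf{Step 3: the consequence for $a_N$.} To deduce \eqref{e:coefficient-l2-convergence}, I would write the equation \eqref{e:gamma-def} as $\Psi_N(\Delta_N)=0$, where
\[
  \Psi_N(x)=\fr1x+\Xi_N+\sum_{k\ge2}\Big(\fr1{x+d_{N,k}}-\fr1{d_{N,k}}\Big)
\]
(using $\sum_{k\ge2}1/d_{N,k}=\Xi_N+N^{1/3}$). This $\Psi_N$ is a function of $(\Xi_N,b_N)$, via $\fr1{x+d}-\fr1d=-\fr{x b^2}{1+xb}$ with $b=1/d$. The map
\[
  (\Xi,b)\mapsto \Psi(\cdot)
\]
is continuous from $\bbR\times\ell^2$ into locally uniform convergence on $(0,\infty)$, since the series is dominated by $x\|b\|_2^2$. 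Because $\Psi$ is strictly decreasing with a simple zero, the root $\Delta$ depends continuously on $(\Xi,b)$ at the limit point. Then
\[
  a_k=\fr{b_k}{1+\Delta b_k}\ \ (k\ge2),\qquad a_1=\fr1\Delta,
\]
so $a$ is a continuous function of $(\Delta,b)$ in $\ell^2$, because $|a_k-a'_k|\lesssim|b_k-b'_k|+|\Delta-\Delta'|b_k^2$. The continuous mapping theorem (in its extended form, using continuity at almost every limit point) then gives $a_N\Rightarrow a(\chi)$.
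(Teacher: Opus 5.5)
Your proposal follows the paper's proof essentially step for step. The shared steps are: the same truncation $\Xi_N=\Xi_{N,K}+(\Xi_N-\Xi_{N,K})$, with the remainder controlled by the Landon--Sosoe edge estimates (the paper uses \cite[Equation (6.40)]{landon2022fluctuations} for $K<k\le N^{1/20}$, \cite[Equation (6.41)]{landon2022fluctuations} beyond that, and a deterministic semicircle computation); an $\ell^2$-tail bound for $b_N$; the Billingsley-type approximation argument; and the extended continuous mapping theorem applied to the root of $\delta\mapsto \xi+\delta^{-1}-\sum_k \delta v_k^2/(1+\delta v_k)$, taken on the cone $v_k\ge0$.

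The one justification to correct is in your Step 2. Rigidity with $N^{\eps}$ losses gives no $N$-uniform lower bound $d_{N,k}\ge ck^{2/3}$ for $K<k\le N^{\eps}$. In that range you need an edge estimate such as \cite[Equation (6.33)]{landon2022fluctuations}, which the paper uses for $K\le k\le N^{2/5}$. That estimate holds with probability $1-O(K^{-1/2})$ on $\{|\chi_{N,1}|\le A\}$, not with high probability. Rigidity is then used only for $k>N^{2/5}$.
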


We will first prove \eqref{e:augmented-edge-convergence} and then deduce \eqref{e:coefficient-l2-convergence} via a continuous mapping argument. Towards establishing \eqref{e:augmented-edge-convergence}, we will need a finite-truncation approximation of $\Xi_N$ as well as inverse-gap tail estimates, which are detailed in Lemmas~\ref{l:finite-truncation} and~\ref{l:inverse-gap-tail} below separately.

We start by introducing the finite-truncation approximation. Recall that $t_k=\left(\frac{3\pi k}{2}\right)^{2/3}$, $k=1,2,\ldots$.
For $K\ge2$, define
\[
  c_K=\fr1\pi\int_0^{t_K}x^{-1/2}\,\de x,
  \qquad
  \Xi_{N,K}=\sum_{k=2}^K\fr1{d_{N,k}}-c_K,
  \qquad
  \Xi_K=\sum_{k=2}^K\fr1{d_k}-c_K.
\]
Let $\rho_{\mathrm{sc}}$ denote the semicircle law
\[
  \rho_{\mathrm{sc}}(x)
  =
  \fr1{2\pi}\sqrt{4-x^2}\,\bbone_{\{|x|\le2\}}.
\]
Heuristically, for large $K$, one expects that with high probability $d_{N,K}\approx t_K$, and thus
\[
\sum_{k=K+1}^N\frac{1}{d_{N,k}}\approx \sum_{d_{N,k}>t_K}\frac{1}{d_{N,k}}\approx N^{1/3}\int_{-2}^{2-N^{-2/3}t_K}\frac{\rho_{\mathrm{sc}}(x)}{2-x}\de x\approx N^{1/3}-c_K\,,
\]
where the last approximation uses the facts that $\int_{-2}^2\frac{\rho_{\mathrm{sc}}(x)}{2-x}\de x=1$ and $\frac{\rho_{\mathrm{sc}}(x)}{2-x}\sim \frac{1}{\pi}(2-x)^{-1/2}$ as $x\uparrow 2$. Therefore, $\Xi_{N,K}$ should be a good approximation of $\Xi_N$ for large $K$. The next lemma makes this precise.

\begin{lem}[Finite-truncation approximation]\label{l:finite-truncation}
  For every $\eps>0$,
  \beq\label{e:XiN-truncation}
    \lim_{K\to\infty}\limsup_{N\to\infty}
    \bbP\bigl(|\Xi_N-\Xi_{N,K}|>\eps\bigr)=0.
  \eeq
\end{lem}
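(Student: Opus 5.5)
We need to show that
\[
  \Xi_N-\Xi_{N,K}=\sum_{k=K+1}^N\fr1{d_{N,k}}-\bigl(N^{1/3}-c_K\bigr)
\]
is small in probability for large $K$, uniformly in large $N$. The plan is to compare the tail sum with a semicircle integral using GOE eigenvalue rigidity, and to make the error $o_K(1)$ by a mesoscopic splitting.

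\textbf{Step 1: rigidity.} We invoke the standard optimal rigidity for GOE eigenvalues (Erd\H{o}s--Yau--Yin). With high probability, for all $k$,
\[
  |\lambda_k-\gamma_k|\le N^{-2/3+\eps'}\hat k^{-1/3},
\]
where $\gamma_k$ is the classical location defined by $\int_{\gamma_k}^2\rho_{\mathrm{sc}}=k/N$ and $\hat k=\min(k,N+1-k)$. We also need $N^{2/3}(2-\lambda_1)=O_P(1)$, which follows from Tracy--Widom tightness. Near the edge, $N^{2/3}(2-\gamma_k)=t_k(1+O(k^{2/3}/N^{2/3}))$.

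\textbf{Step 2: split the tail sum.} Fix a large cutoff $L\le N^{\delta}$ (eventually $L=N^{\delta}$ for a small $\delta>0$) and write
\[
  \sum_{k>K}\fr1{d_{N,k}}=\sum_{K<k\le L}\fr1{d_{N,k}}+\sum_{k>L}\fr1{d_{N,k}}.
\]

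\emph{Bulk part ($k>L$).} Here $d_{N,k}\gtrsim L^{2/3}$, and rigidity gives the relative error
\[
  \bigl|d_{N,k}-N^{2/3}(2-\gamma_k)\bigr|\le N^{\eps'}k^{-1/3}+O_P(1).
\]
So replacing $1/d_{N,k}$ by $1/(N^{2/3}(2-\gamma_k))$ costs at most
\[
  \sum_{k>L}\bigl(N^{\eps'}k^{-1/3}+O_P(1)\bigr)k^{-4/3}\lesssim N^{\eps'}L^{-2/3}+O_P(L^{-1/3}),
\]
which is $o(1)$ provided $\eps'\ll\delta$. Near the spectral bottom, $d_{N,k}\approx 4N^{2/3}$, so those terms are harmless. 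The deterministic sum is a Riemann sum:
\[
  \sum_{k>L}\fr{1}{N^{2/3}(2-\gamma_k)}=N^{1/3}\int_{-2}^{\gamma_L}\fr{\rho_{\mathrm{sc}}(x)}{2-x}\,\de x+o(1).
\]
By the identity $\int_{-2}^{2}\rho_{\mathrm{sc}}(x)/(2-x)\,\de x=1$ and the edge expansion $\rho_{\mathrm{sc}}(x)/(2-x)\approx\pi^{-1}(2-x)^{-1/2}$, this integral term equals $N^{1/3}-c_L+o(1)$. The Riemann-sum error needs care because $1/(2-x)$ is monotone and the density is singular at the edge. I would handle it by comparing each term with the integral over the cell $[\gamma_{k+1},\gamma_k]$; the cell errors are of order $k^{-4/3}$, which sum to $O(L^{-1/3})$.

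\emph{Middle part ($K<k\le L$).} The remaining quantity is
\[
  \sum_{K<k\le L}\fr1{d_{N,k}}-(c_L-c_K),\qquad c_L-c_K\approx\sum_{K<k\le L}\fr1{t_k}.
\]
Rigidity near the edge gives $|d_{N,k}-t_k|\le N^{\eps'}k^{-1/3}+O_P(1)$. The difference is then at most
\[
  \sum_{k>K}\bigl(N^{\eps'}k^{-1/3}+O_P(1)\bigr)t_k^{-2}.
\]
The $O_P(1)$ part contributes $O_P(K^{-1/3})$, which is fine. The $N^{\eps'}$ factor, however, is not uniform in $N$.

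\textbf{Main obstacle.} The difficulty is exactly this $N^{\eps'}$ loss at small $k$ in the middle range. To remove it, I would use the $N$-independent edge rigidity estimate \eqref{e:airy-rigidity-input}. Its finite-$N$ analogue for GOE, with constants uniform in $N$ for $k\le N^{\delta}$, follows from the same Landon--Sosoe argument (\cite[(6.47)--(6.48)]{landon2022fluctuations}). It gives, uniformly in $N$,
\[
  \bbE\,\min\Bigl(1,\Bigl|\fr1{d_{N,k}}-\fr1{t_k}\Bigr|\Bigr)\lesssim k^{-4/3}\,\mathrm{polylog}(k)+\bbP(\text{small }d_{N,k}).
\]
This bound is summable over $k>K$ with total $o_K(1)$, so by Markov's inequality the middle part is small uniformly in $N$. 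Small gaps $d_{N,k}$ at moderately large $k$ are excluded by the same rigidity, since $d_{N,k}\ge t_k/2$ with probability $1-O(k^{-2/3}\log k)$.

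Combining the three pieces gives \eqref{e:XiN-truncation}, using $c_L-c_L=0$ in the telescoping and letting first $N\to\infty$ and then $K\to\infty$.
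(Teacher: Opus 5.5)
Your proposal is correct, and it follows the same decomposition as the paper. With $L=N^\delta$ in the role of $M_N=\lfloor N^{1/20}\rfloor$, your middle block is the paper's $B_{N,K}$, your bulk sum is $C_N$, and your semicircle edge expansion is $D_N$. The difference lies only in how the two random pieces are controlled.

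\begin{itemize}
\item \emph{Middle block.} The paper cites \cite[Equation (6.40)]{landon2022fluctuations}, which already gives $\limsup_N\bbP(|B_{N,K}|>r)\le r$ for $K\ge K_r$. You instead correctly note that optimal rigidity loses $N^{\eps'}$ at fixed $k$, so an $N$-uniform fluctuation bound for individual edge eigenvalues is needed. Your summable per-index bound plus Markov does the job. However, it rests on an $N$-uniform, finite-$N$ version of \eqref{e:airy-rigidity-input} for $k\le N^\delta$, which you assert rather than cite. This is the same kind of counting-function input that underlies (6.40), so both routes rely on the same information; the paper's is simply shorter.
\item \emph{Bulk.} The paper cites \cite[Equation (6.41)]{landon2022fluctuations}, a high-probability resolvent comparison with error $N^{-c/20}$. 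You rederive this from optimal rigidity plus a Riemann-sum comparison, which is more self-contained. The edge needs no special care: each $\gamma$-cell carries semicircle mass $1/N$ and $1/(2-x)$ is monotone, so the cell errors telescope to at most $N^{-2/3}(2-\gamma_L)^{-1}\approx t_L^{-1}$.
\end{itemize}

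There is one slip to fix in the middle block. The small-gap probability is not $O(k^{-2/3}\log k)$; that bound would not be summable over $k>K$, and your argument needs it to be. Taking $s\asymp t_k$ in (the finite-$N$ analogue of) \eqref{e:airy-rigidity-input} gives $\bbP(|\chi_{N,k}-t_k|>t_k/4)=O(k^{-2}\log k)$. Separately, $\bbP(|\chi_{N,1}|>t_k/4)$ is summable by uniform-in-$N$ tail bounds for $\chi_{N,1}$. Alternatively, you can avoid summing altogether by using monotonicity of $d_{N,k}$ in $k$ over dyadic blocks, as in the proof of Lemma~\ref{l:inverse-gap-tail}. With this correction, the sum of your per-index bounds is $o_K(1)$ uniformly in $N$, and the argument goes through.
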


\begin{proof}
  Define the upper-edge locations $\hat\gamma_{k,N}$ by
  \[
    \int_{\hat\gamma_{k,N}}^2\rho_{\mathrm{sc}}(x)\,\de x
    =
    \fr{k}{N},
    \qquad
    0\le k\le N,
  \]
  where $\hat\gamma_{0,N}=2$.
  With our decreasing eigenvalue ordering, the upper-edge location obtained from the convention in \cite[Theorem~3.3]{landon2022fluctuations} is
  $\overline\gamma_{k,N}=\hat\gamma_{k-1,N}$.
 
  Denote $M_N=\lfloor N^{1/20}\rfloor$.
  For $2\le K<M_N$, define
  \[
    \begin{split}
      B_{N,K}
      &=
      \sum_{k=K+1}^{M_N}\fr1{d_{N,k}}
      -\fr1\pi\int_{t_K}^{t_{M_N}}x^{-1/2}\,\de x,\\
      C_N
      &=
      \sum_{k=M_N+1}^N\fr1{d_{N,k}}
      -N^{1/3}\int_{-2}^{\hat\gamma_{M_N,N}}
        \fr{\rho_{\mathrm{sc}}(x)}{2-x}\,\de x,\\
      D_N
      &=
      \fr1\pi\int_0^{t_{M_N}}x^{-1/2}\,\de x
      -N^{1/3}\int_{\hat\gamma_{M_N,N}}^2
        \fr{\rho_{\mathrm{sc}}(x)}{2-x}\,\de x.
    \end{split}
  \]
  Since
  $
    \int_{-2}^2\fr{\rho_{\mathrm{sc}}(x)}{2-x}\,\de x=1,
  $
  these definitions give the exact decomposition
  \beq\label{e:XiN-tail-decomposition}
    \Xi_N-\Xi_{N,K}=B_{N,K}+C_N+D_N.
  \eeq
  We will show that each of the three terms in the right-hand side of \eqref{e:XiN-tail-decomposition} converges to $0$ while taking $N\to \infty$ and subsequently $K\to\infty$.

  For the term $B_{N,K}$, we may write
  \[
    B_{N,K}
    =
    \sum_{k=K+1}^{M_N}
    \lt[
      \fr1{N^{2/3}(\lambda_1-\lambda_k)}
      -
      \fr1\pi\int_{t_{k-1}}^{t_k}x^{-1/2}\,\de x
    \rt].
  \]
  Thus \cite[Equation (6.40)]{landon2022fluctuations}, with its edge index equal to $K+1$, implies that for every $r>0$ there is $K_r$ such that
  \beq\label{e:edge-block-control}
    \limsup_{N\to\infty}\bbP(|B_{N,K}|>r)\le r,
    \qquad
    K\ge K_r.
  \eeq

  We next handle the term $C_N$ by applying \cite[Equation (6.41)]{landon2022fluctuations} with $N^{\delta_0}$ replaced by $M_N+1$.
  For every $D>0$, that estimate gives $c>0$ and an event $\cR_N^{\mathrm{res}}$ with $\bbP(\cR_N^{\mathrm{res}})\ge 1-N^{-D}$ on which
  \[
    \lt|
      \fr1N\sum_{k=M_N+1}^N\fr1{\lambda_1-\lambda_k}
      -
      \int_{-2}^{\overline\gamma_{M_N+1,N}}
        \fr{\rho_{\mathrm{sc}}(x)}{2-x}\,\de x
    \rt|
    \le N^{-1/3-c/20}.
  \]
  Since $d_{N,k}=N^{2/3}(\lambda_1-\lambda_k)$ and $\overline\gamma_{M_N+1,N}=\hat\gamma_{M_N,N}$, this means precisely that
  \begin{equation}\label{e:resolvent-event-probability}
  |C_N|\le N^{-c/20}\quad \text{on }\mathcal R_N^{\mathrm{res}}. 
  \end{equation}
 
It remains to estimate $D_N$, which is a deterministic quantity.
  For $M\le N^{2/5}$, set
  \[
    s_{M,N}=2-\hat\gamma_{M,N},
    \qquad
    T_{M,N}=N^{2/3}s_{M,N}.
  \]
  Taylor expansion of the semicircle density at the upper edge gives
  \[
    \fr{M}{N}
    =
    \fr{2}{3\pi}s_{M,N}^{3/2}\lt(1+O(s_{M,N})\rt),
    \qquad
    \fr{\rho_{\mathrm{sc}}(2-u)}{u}
    =
    \fr1\pi u^{-1/2}\lt(1+O(u)\rt).
  \]
  The first expansion and the definition $t_M=(3\pi M/2)^{2/3}$ imply
  \beq\label{e:classical-edge-scaled-taylor}
    T_{M,N}
    =
    t_M+O(t_M^2N^{-2/3}).
  \eeq
  Using the second expansion and the change of variables $u=2-x$, we obtain
  \beq\label{e:semicircle-resolvent-taylor}
    \begin{split}
      N^{1/3}\int_{\hat\gamma_{M,N}}^2
        \fr{\rho_{\mathrm{sc}}(x)}{2-x}\,\de x
      &=
      N^{1/3}\int_0^{s_{M,N}}
        \fr{\rho_{\mathrm{sc}}(2-u)}{u}\,\de u=
      \fr2\pi T_{M,N}^{1/2}
      +O(MN^{-2/3}).
    \end{split}
  \eeq
  Equation \eqref{e:classical-edge-scaled-taylor} gives
  \[
    |T_{M,N}^{1/2}-t_M^{1/2}|
    =
    O(t_M^{3/2}N^{-2/3})
    =
    O(MN^{-2/3}).
  \]
  Since $\pi^{-1}\int_0^{t_M}x^{-1/2}\,\de x=2\pi^{-1}t_M^{1/2}$, \eqref{e:semicircle-resolvent-taylor} yields
  \beq\label{e:deterministic-resolvent-control}
    D_N=O(M_NN^{-2/3})=o(1).
  \eeq

  We now combine the three estimates in \eqref{e:XiN-tail-decomposition}.
  Given $\eps,\eta>0$, choose $r<\min\{\eps/3,\eta\}$ and then choose $K\ge K_r$.
  Equations \eqref{e:edge-block-control}, \eqref{e:resolvent-event-probability}, and \eqref{e:deterministic-resolvent-control} then give
  \[
    \limsup_{N\to\infty}
    \bbP(|\Xi_N-\Xi_{N,K}|>\eps)
    \le r<\eta.
  \]
  Letting $K\to\infty$ proves \eqref{e:XiN-truncation}.
\end{proof}

The next lemma controls the tail of the random sequence $b_N=(0,d_{N,2}^{-1},\dots,d_{N,N}^{-1},0,\dots)\in \ell^2$.

\begin{lem}[Inverse-gap tails]\label{l:inverse-gap-tail}
  For every $\eps>0$,
  \beq\label{e:inverse-gap-l2-tail}
    \lim_{K\to\infty}\limsup_{N\to\infty}
    \bbP\lt(\sum_{K<k\le N}d_{N,k}^{-2}>\eps\rt)=0.
  \eeq
\end{lem}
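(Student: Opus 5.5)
We split the sum at the same mesoscopic index $M_N=\lfloor N^{1/20}\rfloor$ used in Lemma~\ref{l:finite-truncation}, and handle the edge block and the bulk block separately. Write
\[
  \sum_{K<k\le N} d_{N,k}^{-2}
  = \sum_{K<k\le M_N} d_{N,k}^{-2} + \sum_{M_N<k\le N} d_{N,k}^{-2}
  =: E_{N,K} + F_N\,.
\]

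\emph{Bulk block $F_N$.} Here we use rigidity of GOE eigenvalues in the form underlying \cite[Equation (6.41)]{landon2022fluctuations}. With probability at least $1-N^{-D}$, we have $\lambda_1\ge 2-N^{-2/3+\delta}$ and $\lambda_k\le \overline\gamma_{k,N}+N^{-2/3+\delta}\min(k,N-k+1)^{-1/3}$ for all $k$. For $k>M_N$, the classical location satisfies $2-\overline\gamma_{k,N}\asymp (k/N)^{2/3}\gg N^{-2/3+\delta}$ once $\delta<1/30$. On this event,
\[
  d_{N,k}\gtrsim k^{2/3}\quad\text{for } M_N<k\le N/2\,,
  \qquad
  d_{N,k}\gtrsim N^{2/3}\quad\text{for } k>N/2\,.
\]
Consequently,
\[
  F_N\lesssim \sum_{k>M_N}k^{-4/3} + N\cdot N^{-4/3}\lesssim M_N^{-1/3}\rightarrow 0\,.
\]
Thus $\bbP(F_N>\eps/2)\to 0$ for every fixed $\eps$.

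\emph{Edge block $E_{N,K}$.} This is the main obstacle, since rigidity at the edge only holds up to $N^{\delta}$ factors and so cannot give a $K$-uniform bound. I would instead use the quantitative edge-gap estimate from \cite[\S6]{landon2022fluctuations} that produced \eqref{e:edge-block-control}. That estimate should be an $N$-uniform analogue of \eqref{e:airy-rigidity-input} for the finite-$N$ rescaled eigenvalues, giving for $k\le M_N$ and suitable $s$
\[
  \bbP\bigl(|\chi_{N,k}-t_k|>s\bigr)\lesssim \fr{\log k+\log(1+s)}{(k^{1/3}s-C)^2}\,.
\]
Apply it with $s=t_k/4$ for each $k\in(K,M_N]$, and likewise bound $\bbP(|\chi_{N,1}|>t_{K}/4)$ using tightness of $\chi_{N,1}$ (Tracy--Widom convergence). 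Off the union of these events, $d_{N,k}\ge t_k/2\gtrsim k^{2/3}$ for all $K<k\le M_N$, and hence $E_{N,K}\lesssim K^{-1/3}$.

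The failure probability is bounded by
\[
  \sum_{k>K}\fr{\log k}{k^{2}}+\bbP\bigl(|\chi_{N,1}|>t_K/4\bigr)\,,
\]
which tends to $0$ as $K\to\infty$, uniformly in $N$ (the second term after $\limsup_N$). If the cited estimate is only available in the summed form (6.40), I would fall back on a cruder route. I would still use $d_{N,k}\ge d_{N,K+1}$ together with monotonicity in $k$, and control $d_{N,K+1}$ from below through the convergence of the point process $(\chi_{N,k})$ in \eqref{e:airy-convergence}. Combined with Proposition~\ref{p:limit-distribution}\ref{i:limit-distribution-chi}, this bounds $d_{N,k}$ for $K<k\le K'$. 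Then \eqref{e:edge-block-control} and rigidity at a mesoscopic index $K'$ would handle the range from $K'$ to $M_N$ through dyadic blocks.

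Combining the two blocks, for every $\eps$ we get
\[
  \lim_K\limsup_N\bbP\Bigl(\sum_{K<k\le N} d_{N,k}^{-2}>\eps\Bigr)=0\,,
\]
which is \eqref{e:inverse-gap-l2-tail}.
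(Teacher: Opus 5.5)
Your bound on the bulk block $F_N$ is correct and matches the paper. The paper uses the same rigidity argument, cutting at $N^{2/5}$ instead of $N^{1/20}$.

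The gap is in the edge block $E_{N,K}$. Your main route needs an $N$-uniform tail bound for each individual $\chi_{N,k}$, $K<k\le M_N$, with failure probabilities summable in $k$ (you get $\log k/k^2$). You only conjecture that \cite{landon2022fluctuations} contains such a bound; the estimate \eqref{e:airy-rigidity-input} quoted in the paper concerns the limiting $\Airy_1$ process $\chi$, not finite $N$. The finite-$N$ edge input the paper actually uses, the reflected form of \cite[Equation (6.33)]{landon2022fluctuations}, is much weaker. It says that for $K_1\le q\le N^{2/5}$, with probability at least $1-q^{-1/2}$, one has $\chi_{N,k}\ge\alpha q^{2/3}$ for all $q\le k\le N^{2/5}$, where $\alpha=(3\pi/2)^{2/3}-1/10$. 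With failure probabilities of order $q^{-1/2}$, your union bound over every $k\in(K,M_N]$ diverges.

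The missing idea is to use that $k\mapsto\chi_{N,k}$ is nondecreasing. Then only a one-sided bound at the dyadic indices $q_\ell=\lceil 2^\ell K\rceil$ is needed, at total cost $\sum_\ell q_\ell^{-1/2}\lesssim K^{-1/2}$. For $q_\ell\le k<q_{\ell+1}$ one then gets $d_{N,k}\ge\alpha q_\ell^{2/3}-A\gtrsim k^{2/3}$ on $\{|\chi_{N,1}|\le A\}$, once $K\gg A^{3/2}$. If the stronger finite-$N$ bound you posit is in fact available, your direct union bound closes the argument, but as written it is an unverified input.

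Your fallback does not fill the hole. Point-process convergence controls only a fixed window $K<k\le K'$. Rigidity, with its $N^{\delta}$ loss, gives $d_{N,k}\gtrsim k^{2/3}$ only for $k\gg N^{\delta}$. The indices between a fixed $K'$ and $N^{\delta}$ are covered by neither.

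The estimate \eqref{e:edge-block-control} cannot bridge this range. It is a qualitative in-probability bound on the single signed sum $B_{N,K'}$ of $d_{N,k}^{-1}$ over all of $(K',M_N]$. By monotonicity, such a sum yields at best $d_{N,m}\gtrsim (m-K')M_N^{-1/3}$, which is far below $m^{2/3}$ for $m\ll M_N$ and does not make $\sum d_{N,m}^{-2}$ small. Running it over dyadic blocks would require quantitative bounds on $B_{N,2^jK'}$ that are summable in $j$ uniformly in $N$, which \eqref{e:edge-block-control} does not provide. What is genuinely needed at these intermediate scales is an $N$-uniform lower-tail estimate for $\chi_{N,q}$, which is exactly what the paper imports.
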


\begin{proof}
  Fix $A,D>0$ and $0<\zeta<2/5$, and choose
  \[
    \alpha=\lt(\fr{3\pi}{2}\rt)^{2/3}-\fr1{10}>0.
  \]
  By the reflected form of \cite[Equation (6.33)]{landon2022fluctuations}, there is a fixed $K_1$ such that, for every integer $K_1\le q\le N^{2/5}$,
  \begin{equation}\label{e:tail-Gq}
    \cG_{N,q}
    =
    \bigcap_{q\le k\le N^{2/5}}
    \{\chi_{N,k}\ge\alpha q^{2/3}\},
    \qquad
    \bbP(\cG_{N,q}^c)\le q^{-1/2}.
  \end{equation}
  Moreover, by the classical GOE eigenvalue rigidity \cite{ErdosYauYin2012Rigidity} (see also \cite[Theorem~3.3]{landon2022fluctuations}), for every $\zeta,D>0$, there is an event $\cR_{N,\zeta,D}$ with $\bbP(\cR_{N,\zeta,D}^c)\le N^{-D}$ on which
  \beq\label{e:upper-edge-rigidity}
    |N^{2/3}(\lambda_k-\hat\gamma_{k,N})|
    \le C N^{\zeta}k^{-1/3},
    \qquad
    2\le k\le N/2.
  \eeq
  We fix $\zeta=0.1$ and $D=10$. For $K\ge K_1$, set $q_\ell=\lceil2^\ell K\rceil$ for $\ell=0,1,\ldots$, and define
  \[
    \cE_{N,K,A}
    =
    \{|\chi_{N,1}|\le A\}
    \cap
    \bigcap_{q_\ell\le N^{2/5}}\cG_{N,q_\ell}
    \cap\cR_{N,\zeta,D}.
  \]
  Combining the union bound with \eqref{e:tail-Gq} gives
  \beq\label{e:inverse-gap-event-probability}
    \bbP(\cE_{N,K,A}^c)
    \le
    \bbP(|\chi_{N,1}|>A)+CK^{-1/2}+N^{-D}.
  \eeq
  
  If $K$ is larger than a constant multiple of $A^{3/2}$, then on $\cE_{N,K,A}$, for every $K<k\le N^{2/5}$, we may choose $\ell$ with $q_\ell\le k<q_{\ell+1}$ and obtain
  \[
    d_{N,k}
    \ge
    \alpha q_\ell^{2/3}-A
    \ge
    c q_\ell^{2/3}
    \ge
    c'k^{2/3}.
  \]
  Consequently,
  \[
    \sum_{K<k\le N^{2/5}}d_{N,k}^{-2}
    \le
    C\sum_{k>K}k^{-4/3}
    \le
    CK^{-1/3}.
  \]
  For $N^{2/5}<k\le N/2$, the standard upper-edge estimate gives
  $N^{2/3}(2-\hat\gamma_{k,N})\ge ck^{2/3}$.
  Together with \eqref{e:upper-edge-rigidity} and $|\chi_{N,1}|\le A$, this implies that on $\cE_{N,K,A}$,
  \[
    d_{N,k}
    \ge
    ck^{2/3}-A-CN^\zeta k^{-1/3}
    \ge
    c'k^{2/3},
  \]
  because $N^\zeta/k\le N^{\zeta-2/5}=o(1)$.
  Monotonicity and the same estimate at $k=\lfloor N/2\rfloor$ give $d_{N,k}\ge cN^{2/3}$ for $k>N/2$.
  Therefore
  \[
    \sum_{N^{2/5}<k\le N}d_{N,k}^{-2}
    \le
    C\sum_{N^{2/5}<k\le N/2}k^{-4/3}
    +CN\cdot N^{-4/3}
    =
    O(N^{-2/15}+N^{-1/3}).
  \]
  First choose $A$ so that the first probability in \eqref{e:inverse-gap-event-probability} is small, then choose $K\gg A^{3/2}$, and finally take the $N$-limit superior.
  This proves \eqref{e:inverse-gap-l2-tail}.
\end{proof}

We are now ready to prove Proposition~\ref{p:augmented-edge-convergence}.

\begin{proof}[Proof of Proposition~\ref{p:augmented-edge-convergence}]
  We first prove joint convergence of $\Xi_N$ with every fixed collection of gaps.
  Fix $m\ge2$ and $K\ge m$, and put
  \[
    \begin{aligned}
      U_N&=(\Xi_N,d_{N,2},\ldots,d_{N,m}),&
      U_{N,K}&=(\Xi_{N,K},d_{N,2},\ldots,d_{N,m}),\\
      U&=(\Xi,d_2,\ldots,d_m),&
      U_K&=(\Xi_K,d_2,\ldots,d_m).
    \end{aligned}
  \]
  The standard joint convergence of the first $K$ GOE edge eigenvalues underlying \eqref{e:airy-convergence} and the continuous mapping theorem give
  \beq\label{e:fixed-truncation-gap-convergence}
    U_{N,K}\Rightarrow U_K.
  \eeq
  Let $H:\bbR^m\to\bbR$ be bounded by one and $1$-Lipschitz.
  For every $\eps>0$,
  \[
    \big|\bbE H(U_N)-\bbE H(U_{N,K})\big|
    \le
    \eps+2\bbP(|\Xi_N-\Xi_{N,K}|>\eps).
  \]
  The same estimate on the Airy probability space gives
  \[
    \big|\bbE H(U_K)-\bbE H(U)\big|
    \le
    \eps+2\bbP(|\Xi_K-\Xi|>\eps).
  \]
  For fixed $K$, the middle comparison tends to zero by \eqref{e:fixed-truncation-gap-convergence}.
  Next let $K\to\infty$, using Lemma~\ref{l:finite-truncation} for the finite-dimensional error and the almost-sure convergence $\Xi_K\to\Xi$ from Proposition~\ref{p:Xi} for the limiting error.
  Finally let $\eps\downarrow0$.
  Since bounded Lipschitz functions determine weak convergence on $\bbR^m$, we have
  \beq\label{e:finite-gap-joint-convergence}
    (\Xi_N,d_{N,2},\ldots,d_{N,m})
    \Rightarrow
    (\Xi,d_2,\ldots,d_m).
  \eeq

  Recall that $b_N=(0,d_{N,2}^{-1},\dots,d_{N,N}^{-1},0,\dots)\in \ell^2$ and $b=(0,d_2^{-1},d_3^{-1},\dots)\in \ell^2$ $\chi$-almost surely. Let $P_m$ retain the coordinates of $\ell^2$ indexed by $1,\ldots,m$.
  Simplicity of the limiting point process (i.e. $d_k\neq 0$ for all $k\ge2$ $\chi$-almost surely) and \eqref{e:finite-gap-joint-convergence} give for every fixed $m$,
  \[
    (\Xi_N,P_mb_N)\Rightarrow(\Xi,P_mb).
  \]
  If $H$ is bounded by one and $1$-Lipschitz on $\bbR\times\ell^2$, then for every $\eta>0$,
  \[
    \begin{split}
      &\ \qquad\qquad\qquad\qquad\qquad\qquad\big|\bbE H(\Xi_N,b_N)-\bbE H(\Xi,b)\big|
      \\\le&\ 
      \eta+2\bbP\bigl(\|(I-P_m)b_N\|_2>\eta\bigr)+
      \big|\bbE H(\Xi_N,P_mb_N)-\bbE H(\Xi,P_mb)\big|+
      \eta+2\bbP\bigl(\|(I-P_m)b\|_2>\eta\bigr).
    \end{split}
  \]
  Take first $N\to\infty$, then $m\to\infty$, using \eqref{e:inverse-gap-l2-tail} from Lemma~\ref{l:inverse-gap-tail} with threshold $\eta^2$ and the almost-sure $\ell^2$ tail convergence of $b$, and finally let $\eta\downarrow0$.
  This proves \eqref{e:augmented-edge-convergence}.

It remains to pass from the inverse gaps to the saddle coefficients by realizing $\Delta_N$, and hence $a_N$, as continuous functions of $(\Xi_N,b_N)$. Let
  \[
    \ell^2_{+,0}
    =
    \{v\in\ell^2:v_1=0,\ v_k\ge0\text{ for }k\ge2\},
    \qquad
    \cX=\bbR\times\ell^2_{+,0}.
  \]
  The cone $\ell^2_{+,0}$ is closed in $\ell^2$, so $\cX$ is Polish.
  Since both input pairs are $\cX$-valued almost surely, \eqref{e:augmented-edge-convergence} also holds when they are regarded as random elements of $\cX$.
  For $(\xi,v)\in\cX$ and $\delta>0$, define
  \[
    \Psi_{\xi,v}(\delta)
    =
    \xi+\fr1\delta
    -\sum_{k\ge2}\fr{\delta v_k^2}{1+\delta v_k}\,.
  \]
  The map $(\xi,v,\delta)\mapsto\Psi_{\xi,v}(\delta)$ is jointly continuous, locally uniformly for $\delta\in(0,\infty)$.
  Indeed, for $\delta\in[\eta,L]$ and $x,y\ge0$,
  \[
    \lt|
      \fr{\delta x^2}{1+\delta x}
      -
      \fr{\delta y^2}{1+\delta y}
    \rt|
    \le
    2L(x+y)|x-y|,
  \]
  so Cauchy--Schwarz controls the series.
  Similarly, for $\delta,\delta'\in[\eta,L]$,
  \[
    \lt|
      \fr{\delta x^2}{1+\delta x}
      -
      \fr{\delta'x^2}{1+\delta'x}
    \rt|
    \le
    |\delta-\delta'|x^2.
  \]
  Termwise differentiation is justified by $\sum_kv_k^2<\infty$, and
  \[
    \Psi_{\xi,v}'(\delta)
    =
    -\fr1{\delta^2}
    -\sum_{k\ge2}\fr{v_k^2}{(1+\delta v_k)^2}
    <0.
  \]

  Define
  \[
    \cD
    =
    \bigcup_{\substack{r,s\in\bbQ\\0<r<s}}
    \{(\xi,v)\in\cX:\Psi_{\xi,v}(r)>0>\Psi_{\xi,v}(s)\}.
  \]
  The set $\cD$ is open, and hence Borel, in $\cX$.
  For $(\xi,v)\in\cD$, strict monotonicity and the intermediate value theorem give a unique zero, denoted by $\operatorname{Root}(\xi,v)$.
  Conversely, if $\Psi_{\xi,v}$ has a zero $\delta$, strict monotonicity and density of the rationals give $0<r<\delta<s$ with $\Psi_{\xi,v}(r)>0>\Psi_{\xi,v}(s)$, so $(\xi,v)\in\cD$.
  The root map is continuous on $\cD$.
  Indeed, if $(\xi_n,v_n)\to(\xi,v)$ in $\cD$ and $\delta=\operatorname{Root}(\xi,v)$, then for every $0<\eta<\delta<L$, local uniform convergence gives $\Psi_{\xi_n,v_n}(\eta)>0>\Psi_{\xi_n,v_n}(L)$ for all sufficiently large $n$.
  Thus $\operatorname{Root}(\xi_n,v_n)\in(\eta,L)$, and letting $\eta\uparrow\delta$ and $L\downarrow\delta$ proves convergence of the roots.

  For the limiting pair, $\Psi_{\Xi,b}(\delta)=\Psi(\delta;\chi)$, so Proposition~\ref{p:limit-distribution}\ref{i:limit-distribution-Psi} implies $(\Xi,b)\in\cD$ almost surely and $\operatorname{Root}(\Xi,b)=\Delta(\chi)$.
  For the finite pair,
  \[
    \Psi_{\Xi_N,b_N}(\delta)
    =
    \sum_{k=1}^N\fr1{\delta+d_{N,k}}-N^{1/3},
  \]
  so \eqref{e:gamma-def} implies $(\Xi_N,b_N)\in\cD$ almost surely and $\operatorname{Root}(\Xi_N,b_N)=\Delta_N$.

  Define $\mathsf A:\cD\to\ell^2$ by
  \[
    \mathsf A(\xi,v)_1
    =
    \operatorname{Root}(\xi,v)^{-1},
    \qquad
    \mathsf A(\xi,v)_k
    =
    \fr{v_k}{1+\operatorname{Root}(\xi,v)v_k},
    \quad k\ge2.
  \]
  This map is continuous.
  To see this, if $(\xi_n,v_n)\to(\xi,v)$ in $\cD$ and the corresponding roots are $\delta_n,\delta$, then
  \[
    \lt\|
      \fr{v_n}{1+\delta_nv_n}
      -
      \fr v{1+\delta v}
    \rt\|_2
    \le
    \|v_n-v\|_2
    +
    |\delta_n-\delta|
    \lt(\sum_{k\ge2}v_k^4\rt)^{1/2}.
  \]
  The last sum is finite because $\sum_kv_k^4\le\|v\|_2^4$, and the first coordinate also converges.
  Set $a^\star=(t_k^{-1})_{k\ge1}\in\cA$, and extend $\mathsf A$ by $a^\star$ on $\cX\setminus\cD$.
  The extension is Borel and is continuous at every point of $\cD$, since $\cD$ is open.
  Since the limiting pair lies in $\cD$ almost surely, the continuous mapping theorem applied to \eqref{e:augmented-edge-convergence} gives
  \[
    \mathsf A(\Xi_N,b_N)
    \Rightarrow
    \mathsf A(\Xi,b).
  \]
  The two sides are $a_N$ and $a(\chi)$ almost surely, respectively, which proves \eqref{e:coefficient-l2-convergence}.
  Because $\chi\mapsto(\Xi,b)$ has a Borel version and the extended root/coefficient map $\mathsf A$ is Borel, this construction first provides an $\ell^2$-valued Borel version of $\chi\mapsto a(\chi)$ which belongs to $\cA$ almost surely.
  Since $\cA$ is Borel, replacing this version by $a^\star$ wherever it lies outside $\cA$ provides an everywhere $\cA$-valued Borel version.
\end{proof}

\subsection{Finite spherical pins}
\label{ss:annealed-limit-square-integrability}

The goal of this subsection is to prove Proposition~\ref{p:limit-distribution-square-integrable-random}.
We first identify the finite Gaussian pin associated with $a_N$ with the spherical Gibbs measure and record the measurability of its overlap law and second moment.
The coefficient convergence from Proposition~\ref{p:augmented-edge-convergence}, continuity of pinned laws, and the spherical exponential moment bound then yield the required annealed estimate.

We begin with some measurability discussion for the finite pins, which ensures that for any $\ell^2$-valued random variable $\mathbf{c}$, the extensions $\fP_{\mathbf{c}}^{\mathrm{fin}}$ and $V_{\mathbf{c}}^{\mathrm{fin}}$ defined below are well-defined random variables in $\mathcal P_2(\bbR)$ and $\bbR$.
For a finite positive vector $c=(c_1,\ldots,c_M)$ with $M\ge3$, let $\fP_c$ be the law of
\[
  \sum_{k=1}^M c_k g_k^{(1)}g_k^{(2)}
\]
when the two sequences are independent samples from the finite pin \eqref{e:finite-coarea-pin}.
Recall the definition of $V_c$ following \eqref{e:finite-coarea-pin}.
For $M\ge3$, set
\[
  \cF_M
  =
  \{c\in\ell^2:c_k>0\text{ for }1\le k\le M,\ c_k=0\text{ for }k>M\},
  \qquad
  \cF_{\mathrm{fin}}=\bigcup_{M\ge3}\cF_M.
\]

\begin{lem}[Measurability of finite pins]
  \label{l:finite-pin-measurability}
  The set $\cF_{\mathrm{fin}}$ is Borel in $\ell^2$.
  On each stratum $\cF_M$, the maps
  \[
    c\mapsto\fP_c\in(\cP_2(\bbR),\bbW_2),
    \qquad
    c\mapsto V_c
  \]
  are continuous, and they are Borel on $\cF_{\mathrm{fin}}$.
  Consequently, the extensions defined by
  \[
    (\fP_c^{\mathrm{fin}},V_c^{\mathrm{fin}})
    =
    \begin{cases}
      (\fP_c,V_c),&c\in\cF_{\mathrm{fin}},\\
      (\delta_0,0),&c\notin\cF_{\mathrm{fin}},
    \end{cases}
  \]
  are Borel on $\ell^2$.
\end{lem}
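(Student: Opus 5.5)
The plan is to handle the Borel structure first and then prove continuity on each stratum. After that, the Borel property on $\cF_{\mathrm{fin}}$ follows by gluing countably many strata, and the extension is Borel because it is defined piecewise on a Borel partition of $\ell^2$.

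For Borelness of $\cF_{\mathrm{fin}}$, note that $\cF_M=\bigcap_{k\le M}\{c_k>0\}\cap\bigcap_{k>M}\{c_k=0\}$. The coordinate maps are continuous on $\ell^2$, so $\cF_M$ is Borel. The strata are pairwise disjoint, hence $\cF_{\mathrm{fin}}$ is a countable disjoint union of Borel sets. Once continuity on each $\cF_M$ is known, any open $U$ has preimage $\bigcup_M(\cF_M\cap \text{(relatively open set)})$, which is Borel. The extension's preimages then differ from these only by adding or removing the Borel set $\ell^2\setminus\cF_{\mathrm{fin}}$.

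The main step is continuity on a fixed stratum $\cF_M$, which is identified with $(0,\infty)^M$. For this I would use Lemma~\ref{l:diagonal-coarea}, after rescaling by $\alpha_c^{-1/2}$. Under this rescaling $\nu_c$ becomes the pushforward under $y\mapsto z_k=\alpha_c^{1/2}c_k^{-1/2}y_k$ of the measure on the fixed unit sphere $\bbS^{M-1}$ with density $Z_c^{-1}\exp(-\frac{\alpha_c}{2}\sum_k y_k^2/c_k)$ relative to normalized surface measure $\omega_1$. On a compact neighborhood of $c$ in $(0,\infty)^M$, this density is bounded above and below and is jointly continuous in $(c,y)$. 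Hence $Z_c$ is continuous and positive, and the densities converge uniformly in $y$.

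Write $\Lambda_c(y,y')=\sum_k c_k z_k z'_k=\alpha_c\sum_k y_k y'_k$. Then $\fP_c$ is the pushforward of $\widetilde\nu_c\otimes\widetilde\nu_c$ (transported to the unit sphere) under $\Lambda_c$. So $\int f\,\de\fP_c$ is an integral over the compact set $(\bbS^{M-1})^2$ of a jointly continuous integrand, for any continuous $f$. Thus $\fP_c$ varies weakly continuously in $c$. All these laws are supported in $[-\alpha_c,\alpha_c]$, which is locally uniformly bounded, so weak convergence upgrades to $\bbW_2$ convergence. The same argument shows $m_k(c)=\bbE_{\nu_c}[g_k^2]$ is continuous, since it is a continuous bounded integrand against the continuous family of densities. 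Therefore $V_c=\sum_k c_k^2m_k(c)^2$ is continuous.

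I expect the main obstacle to be verifying that the normalization in \eqref{e:finite-coarea-pin}, namely $p_c(0)$, is consistent with Lemma~\ref{l:diagonal-coarea}, so that $\nu_c$ is a genuine probability measure. I would sidestep this by working only with the normalized form \eqref{e:coarea-spherical-density}, where $Z_c$ is defined as the normalizing constant. That form makes probability-ness automatic and avoids any continuity question for $p_c(0)$.
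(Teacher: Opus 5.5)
Your proposal is correct and follows essentially the same route as the paper: transport the pin to the fixed unit sphere via Lemma~\ref{l:diagonal-coarea}, use uniform convergence of the normalized densities there to get weak continuity of $c\mapsto\fP_c$, upgrade to $\bbW_2$ using locally uniformly bounded supports, and glue the countably many disjoint Borel strata. The only cosmetic difference is that you obtain continuity of $V_c$ directly from continuity of each $m_k(c)$. The paper instead identifies $V_c=\int q^2\,\fP_c(\de q)$ via sign symmetry and reads continuity off the $\bbW_2$-continuity.
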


\begin{proof}
  Each stratum is Borel because
  \[
    \cF_M
    =
    \bigcap_{k\le M}\{c\in\ell^2:c_k>0\}
    \cap
    \bigcap_{k>M}\{c\in\ell^2:c_k=0\}.
  \]
  The strata are disjoint, so their countable union is Borel.

  Fix $M\ge3$, let $c\in\cF_M$, and put $\alpha_c=\sum_{k=1}^M c_k$.
  Lemma~\ref{l:diagonal-coarea}, followed by the scaling $y=\sqrt{\alpha_c}\,u$, gives the following law on $\bbS^{M-1}$:
  \beq\label{e:finite-pin-angular-law}
    \rho_c(\de u)
    =
    \fr{
      \exp\{-\fr{\alpha_c}{2}\sum_{k=1}^M u_k^2/c_k\}
    }{
      \displaystyle
      \int_{\bbS^{M-1}}
      \exp\{-\fr{\alpha_c}{2}\sum_{k=1}^M v_k^2/c_k\}
      \,\de\omega_1(v)
    }
    \,\de\omega_1(u).
  \eeq
  If $u,v$ are independent with law $\rho_c$, then
  \beq\label{e:finite-pin-angular-overlap}
    \fP_c
    =
    \Law_{\rho_c^{\otimes2}}(\alpha_c\,u\cdot v).
  \eeq
  Coordinatewise sign symmetry and replica independence give
  \[
    \int q^2\,\fP_c(\de q)
    =
    \sum_{k=1}^Mc_k^2m_k(c)^2
    =
    V_c.
  \]

  Suppose $c^{(n)}\to c$ in $\cF_M$.
  The exponents and normalizing constants in \eqref{e:finite-pin-angular-law} converge uniformly on $\bbS^{M-1}$.
  Thus the densities of $\rho_{c^{(n)}}$ converge uniformly to the density of $\rho_c$.
  Moreover, $\alpha_{c^{(n)}}u\cdot v\to\alpha_cu\cdot v$ uniformly on $(\bbS^{M-1})^2$, and these functions are uniformly bounded.
  It follows that the laws in \eqref{e:finite-pin-angular-overlap} converge weakly and that their second moments converge.
  The weak-plus-second-moment characterization of Wasserstein convergence (see, e.g., \cite[Theorem 6.9]{Villani2009}) gives
  \[
    \bbW_2(\fP_{c^{(n)}},\fP_c)\rightarrow0.
  \]
  Since
  \[
    V_c=\int q^2\,\fP_c(\de q),
  \]
  the map $c\mapsto V_c$ is also continuous on $\cF_M$.
  A map on a countable union of Borel strata is Borel when its restriction to each stratum is Borel.
  The stated extensions are therefore Borel on $\ell^2$.
\end{proof}

The next lemma connects the quenched spherical Gibbs measure with finite Gaussian pins.

\begin{lem}
  \label{l:finite-spherical-pin}
  For $N\ge3$, the vector $a_N=(a_{N,1},\ldots,a_{N,N},0,\ldots)$ belongs to $\cF_N$ almost surely, and conditionally on $\bW$,
  \beq\label{e:finite-pin-overlap-law}
    \fP_{a_N}
    =
    \la\delta(N^{1/3}R_{1,2})\ra^\sph.
  \eeq
  Moreover,
  \beq\label{e:finite-pin-overlap-second-moment}
    V_{a_N}
    =
    N^{2/3}\la R_{1,2}^2\ra^\sph.
  \eeq
\end{lem}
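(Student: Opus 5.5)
Since the statement is quenched, we fix a realization of $\bW$ and work conditionally on it. By orthogonal invariance of $\nu_N$ we rotate into the eigenbasis and take $\bW=\diag(\lambda_1,\ldots,\lambda_N)$; this preserves overlaps, so the law of $R_{1,2}$ under $\mu^\sph_{N,1}$ is unchanged. On the probability-one event that the spectrum is simple, \eqref{e:gamma-def} has a unique solution $\gamma>\lambda_1$, so every $a_{N,k}=N^{-2/3}(\gamma-\lambda_k)^{-1}$ is positive and finite. Hence $a_N\in\cF_N$ almost surely, using $N\ge 3$.

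The main step identifies $\mu^\sph_{N,1}$ with a pushforward of the finite pin $\nu_{a_N}$ from \eqref{e:finite-coarea-pin}, and Lemma~\ref{l:diagonal-coarea} does this almost directly. With $c=a_N$ we have $\alpha_c=\sum_k a_{N,k}=N^{-2/3}\sum_k(\gamma-\lambda_k)^{-1}=N^{1/3}$ by \eqref{e:gamma-def}. Lemma~\ref{l:diagonal-coarea} says that $y_k=c_k^{1/2}z_k$ under $\nu_c$ has law proportional to $\exp(-\fr12\sum_k y_k^2/c_k)\,\de\omega_{N^{1/6}}(y)$. Set $\bx=N^{1/3}y$. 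Then $\|\bx\|^2=N^{2/3}\alpha_c=N$, so $\bx$ lies on $S_N$, and $\omega_{N^{1/6}}$ pushes forward to $\nu_N$. The weight becomes
\[
\exp\Big(-\fr12\sum_k \fr{N^{-2/3}x_k^2}{N^{-2/3}(\gamma-\lambda_k)^{-1}}\Big)=\exp\Big(-\fr{\gamma}{2}\|\bx\|^2+\fr12(\bW\bx,\bx)\Big)\,.
\]
On $S_N$ the first term is the constant $e^{-\gamma N/2}$, so the density is proportional to $e^{H_N(\bx)}$. Therefore the pushforward of $\nu_{a_N}$ under $z\mapsto \bx=N^{1/3}(a_{N,k}^{1/2}z_k)_k$ is exactly $\mu^\sph_{N,1}$. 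This makes rigorous the heuristic conditioning of $\bxi$ from \S\ref{ss:intro-spherical}: coarea and the Lemma~\ref{l:diagonal-coarea} Jacobian replace the formal conditioning on a null event.

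Now take two independent replicas $z^{(1)},z^{(2)}\sim\nu_{a_N}$. Then
\[
N^{1/3}R(\bx^1,\bx^2)=N^{-2/3}(\bx^1,\bx^2)=\sum_k a_{N,k}z^{(1)}_kz^{(2)}_k\,.
\]
The right-hand side has law $\fP_{a_N}$ by definition, which gives \eqref{e:finite-pin-overlap-law}.

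For \eqref{e:finite-pin-overlap-second-moment}, the proof of Lemma~\ref{l:finite-pin-measurability} already gives $\int q^2\,\fP_c(\de q)=V_c$. That identity follows from coordinatewise sign symmetry of $\nu_c$, which holds because the density in \eqref{e:finite-coarea-pin} depends on each $z_k$ only through $z_k^2$, together with independence of the replicas. Applying this with $c=a_N$ and using \eqref{e:finite-pin-overlap-law}, we get $V_{a_N}=\la (N^{1/3}R_{1,2})^2\ra^\sph=N^{2/3}\la R_{1,2}^2\ra^\sph$.

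The only delicate point is the measure-theoretic bookkeeping in the identification step: checking that normalized surface measure scales correctly under $\bx=N^{1/3}y$, and that the measure in \eqref{e:finite-coarea-pin} and the Gibbs measure are both probability measures, so that the constants cancel. Everything else is substitution.
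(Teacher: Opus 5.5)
Your proof is correct and follows the paper's argument essentially line for line: you use $\sum_k a_{N,k}=N^{1/3}$, apply Lemma~\ref{l:diagonal-coarea}, rescale by $N^{1/3}$ onto $S_N$, recognize the weight $e^{-\gamma N/2}e^{H_N(\bx)}$ in the eigenbasis, read off the overlap under the pushforward coupling, and get the second moment from sign symmetry. One minor remark: you do not need simplicity of the spectrum, since $\gamma\mapsto\sum_i(\gamma-\lambda_i)^{-1}$ decreases strictly from $+\infty$ to $0$ on $(\lambda_1,\infty)$, so $a_N\in\cF_N$ holds surely.
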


\begin{proof}
  The saddle equation and the definition of $a_N$ give
  \beq\label{e:finite-aN-sum}
    \sum_{k=1}^Na_{N,k}=N^{1/3}.
  \eeq
  Apply Lemma~\ref{l:diagonal-coarea} with $c=(a_{N,1},\ldots,a_{N,N})$, and set
  \[
    y_k=a_{N,k}^{1/2}g_k,
    \qquad
    \xi_k=N^{1/3}y_k.
  \]
  By \eqref{e:finite-aN-sum}, $y$ lies on the sphere of squared radius $N^{1/3}$ and $\bxi$ lies on $S_N$.
  Since
  \[
    a_{N,k}^{-1}
    =
    \Delta_N+d_{N,k}
    =
    N^{2/3}(\gamma-\lambda_k),
  \]
  the density furnished by Lemma~\ref{l:diagonal-coarea}, after the second scaling, is proportional on $S_N$ to
  \[
    \exp\lt\{-\fr12\sum_{k=1}^N(\gamma-\lambda_k)\xi_k^2\rt\}
    =
    e^{-\gamma N/2}
    \exp\lt\{\fr12\sum_{k=1}^N\lambda_k\xi_k^2\rt\}.
  \]
  The first factor is constant on $S_N$.
  Thus the pushforward of the finite pin is exactly $\mu^\sph_{N,\beta=1}$ in an eigenbasis of $\bW$.
  Orthogonal invariance of surface measure and of the overlap gives the same identification in the original coordinates.
  Under this pushforward coupling, for two independent replicas,
  \[
    N^{1/3}R_{1,2}
    =
    \sum_{k=1}^Na_{N,k}g_k^{(1)}g_k^{(2)},
  \]
  which proves \eqref{e:finite-pin-overlap-law}.
  Coordinatewise sign symmetry of the finite pin gives $\bbE_{\nu_{a_N}}g_jg_k=0$ for $j\ne k$.
  Squaring the last display and using replica independence identifies its second moment with $V_{a_N}$ and proves \eqref{e:finite-pin-overlap-second-moment}.
\end{proof}

\begin{proof}[Proof of Proposition~\ref{p:limit-distribution-square-integrable-random}]
  By Proposition~\ref{p:augmented-edge-convergence}, fix an everywhere $\cA$-valued Borel version of $\chi\mapsto a(\chi)$.
  Lemma~\ref{l:parameter-measurability} gives Borel maps $a\mapsto\fP_a$ and $a\mapsto V_a$ on $\cA$.

  By Proposition~\ref{p:augmented-edge-convergence} and Skorokhod's representation theorem, there are copies $\widetilde a_N$ and $\widetilde a$ of $a_N$ and $a(\chi)$ on a common probability space such that
  $\widetilde a_N\rightarrow\widetilde a$ in $\ell^2$
  almost surely.
  On a common probability-one event, every $\widetilde a_N$ belongs to $\cF_N$, $\widetilde a\in\cA$, and the displayed convergence holds.
  Lemma~\ref{l:finite-pin-measurability} makes $V_{\widetilde a_N}^{\mathrm{fin}}$ a nonnegative random variable.
  Lemma~\ref{l:parameter-measurability} makes $V_{\widetilde a}$ measurable on this event, and we set it equal to zero on its null complement.
  Lemma~\ref{l:pinned-law-continuity} below applies pathwise and gives
  $
    V_{\widetilde a_N}^{\mathrm{fin}}
    \rightarrow
    V_{\widetilde a}.
  $
  Fatou's lemma and preservation of the marginal laws on the Skorokhod coupling yield
  \beq\label{e:annealed-V-liminf}
    \bbE_\chi V_{a(\chi)}
    =
    \widetilde{\bbE}V_{\widetilde a}
    \le
    \liminf_{N\to\infty}
    \widetilde{\bbE}V_{\widetilde a_N}^{\mathrm{fin}}
    =
    \liminf_{N\to\infty}\bbE V_{a_N}.
  \eeq

  It remains to prove the right-hand side of \eqref{e:annealed-V-liminf} is finite. Let $c>0$ be the constant in Proposition~\ref{p:exponential-moment-bound}.
  Since
  \[
    x^2\le\fr4{c^2e^2}e^{c|x|},
    \qquad\forall x\in\bbR,
  \]
  that proposition and Lemma~\ref{l:finite-spherical-pin} imply, for every $N\ge3$,
  \[
    \bbE V_{a_N}
    =
    \bbE\la(N^{1/3}R_{1,2})^2\ra^\sph
    \le
    \fr8{c^2e^2}.
  \]
  Equation \eqref{e:annealed-V-liminf} proves $\bbE_\chi V_{a(\chi)}<\infty$.

  Finally,
  \[
    \bbW_2^2(\fP_{a(\chi)},\delta_0)
    =
    \int q^2\,\fP_{a(\chi)}(\de q)
    =
    V_{a(\chi)}.
  \]
  The map $\chi\mapsto\fP_{a(\chi)}$ is Borel by Proposition~\ref{p:augmented-edge-convergence} and Lemma~\ref{l:parameter-measurability}, while the preceding expectation bound is precisely the required outer second-moment condition.
  Hence $\Law(\fP_{a(\chi)})\in\cP_2(\cP_2(\bbR))$.
\end{proof}

\subsection{Continuity of pinned laws and the spherical overlap limit}
\label{ss:spherical-overlap-limit}

The goal of this subsection is to prove Theorem~\ref{t:main}\ref{i:main-sphere}.
We first prove that convergence in $\ell^2$ implies that finite pins converge to the infinite pin in $\bbW_2$ (Lemma~\ref{l:pinned-law-continuity}), by combining convergence of their finite-coordinate marginals with uniform tail control.
The finite spherical-pin identity and Proposition~\ref{p:augmented-edge-convergence} then give weak convergence of the random overlap laws, which relative compactness upgrades to convergence in $\cW_2$.

To handle the finite-coordinate marginals, we record a finite-pin analogue of the density formula \eqref{e:canonical-pin-marginal-s}. 

\begin{lem}[Finite-pin block marginals]
  \label{l:finite-pin-block-marginals}
  Let $c=(c_1,\ldots,c_M)\in(0,\infty)^M$, and let $1\le K\le M-3$.
  Write
  \[
    S_{c,K}(z)=\sum_{k=1}^Kc_k(z_k^2-1),
  \]
  and let $p_{c,>K}$ be the density of
  \[
    T_{c,>K}=\sum_{k=K+1}^Mc_k(g_k^2-1).
  \]
  Then $p_{c,>K}$ is bounded and continuous, and the first-$K$ marginal of $\nu_c$ has density
  \beq\label{e:finite-pin-block-density}
    h_{c,K}(z)
    =
    \fr{\varphi_K(z)}{p_c(0)}
    p_{c,>K}\lt(-S_{c,K}(z)\rt),
  \eeq
  where, for every $r\ge1$,
  \[
    \varphi_r(z)=\prod_{k=1}^r\fr{e^{-z_k^2/2}}{\sqrt{2\pi}}.
  \]
\end{lem}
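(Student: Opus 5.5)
The plan is to derive \eqref{e:finite-pin-block-density} from the coarea definition \eqref{e:finite-coarea-pin} of $\nu_c$. We compute $\nu_c(B\times\bbR^{M-K})$ for a Borel set $B\subset\bbR^K$ by a limiting argument that realizes $\nu_c$ as the weak limit of the Gaussian measure conditioned on $|F_c(g)|<\eps$.

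Regularity of $p_{c,>K}$ comes first. Since $M-K\ge 3$, Lemma~\ref{l:constraint-densities-positive} applies to $\sum_{k>K}c_kg_k^2$ with at least three positive coefficients. It gives a bounded continuous density, positive on $(0,\infty)$. Translating by the constant $\sum_{k>K}c_k$ shows that $p_{c,>K}$ is bounded and continuous on $\bbR$.

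Next comes the disintegration identity. For bounded continuous $f$ on $\bbR^M$, the coarea formula gives
\[
  \bbE\bigl[f(g)\bbone\{|F_c(g)|<\eps\}\bigr]
  = \int_{-\eps}^{\eps}\int_{F_c^{-1}(t)}\fr{f(z)\prod_k\varphi(z_k)}{\|\nabla F_c(z)\|}\,\de\cH^{M-1}(z)\,\de t.
\]
Because $0$ is a regular value and $F_c^{-1}(0)$ is a compact smooth hypersurface, the inner integral is continuous in $t$ near $0$. This can be seen by flowing along $\nabla F_c/\|\nabla F_c\|^2$, exactly as in the continuity of $\tilde p_c$ noted before \eqref{e:finite-coarea-pin}. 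Dividing by $2\eps$ and letting $\eps\downarrow0$ gives
\[
  \fr{1}{2\eps}\bbE\bigl[f(g)\bbone\{|F_c(g)|<\eps\}\bigr]\to p_c(0)\int f\,\de\nu_c.
\]
Now take $f(z)=f_1(z_1,\dots,z_K)$ with $f_1$ bounded and continuous. Condition on the first $K$ coordinates and use independence of $T_{c,>K}$ from them, with $F_c(g)=S_{c,K}(g)+T_{c,>K}$. The left side then equals
\[
  \int \varphi_K(z)f_1(z)\fr{1}{2\eps}\int_{-\eps}^{\eps}p_{c,>K}\bigl(t-S_{c,K}(z)\bigr)\,\de t\,\de z.
\]
Since $p_{c,>K}$ is bounded and continuous, dominated convergence sends this to $\int\varphi_K f_1\,p_{c,>K}(-S_{c,K})\,\de z$. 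Comparing the two limits identifies the marginal density as \eqref{e:finite-pin-block-density}. Bounded continuous $f_1$ determine the measure, so this suffices.

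The only delicate step is the continuity in $t$ of the coarea fiber integral with a general test function $f$. The paper asserted this only for $f\equiv1$ when it identified $\tilde p_c(0)=p_c(0)$. I expect the same gradient-flow diffeomorphism argument to handle bounded continuous $f$ verbatim, since a neighborhood of the compact regular level set is a product. If that becomes cumbersome, an alternative is to use Lemma~\ref{l:diagonal-coarea}. In the coordinates $y_k=c_k^{1/2}z_k$ this reduces everything to a Gaussian weight on a round sphere, and polar coordinates give the continuity directly.
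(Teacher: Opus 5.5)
Your proposal is correct and follows essentially the same route as the paper. You approximate the pin by an approximate identity in the coarea representation, use continuity of the fiber integral near $t=0$, and compare this with the limit obtained by conditioning on the first $K$ coordinates and convolving with the bounded continuous density $p_{c,>K}$. The differences are cosmetic: the paper uses a smooth even mollifier instead of your box kernel $\frac{1}{2\eps}\bbone\{|t|<\eps\}$. It gets continuity of the fiber integral for general test functions from the explicit radial scaling $x\mapsto(1+t/\alpha_c)^{1/2}x$ carrying $F_c^{-1}(0)$ onto $F_c^{-1}(t)$, a concrete instance of your gradient-flow step. It derives the regularity of $p_{c,>K}$ from the $O(|u|^{-3/2})$ decay of its characteristic function rather than from Lemma~\ref{l:constraint-densities-positive}.
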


\begin{proof}
  Put $\alpha_c=\sum_{k=1}^Mc_k$.
  Choose an even nonnegative function $\eta\in C_c^\infty(\bbR)$ with $\int\eta=1$, and set $\eta_\eps(t)=\eps^{-1}\eta(t/\eps)$.
  For a bounded continuous function $\vartheta:\bbR^K\to\bbR$, define, for $t>-\alpha_c$,
  \[
    A_\vartheta(t)
    =
    \int_{F_c^{-1}(t)}
    \vartheta(x_{\le K})
    \fr{\varphi_M(x)}{\|\nabla F_c(x)\|}
    \,\de\cH^{M-1}(x).
  \]
  The function $A_\vartheta$ is continuous in a neighborhood of zero.
  Indeed, if $r_t=(1+t/\alpha_c)^{1/2}$, then the map $x\mapsto r_tx$ carries $F_c^{-1}(0)$ onto $F_c^{-1}(t)$, and scaling surface measure and the gradient gives
  \[
    A_\vartheta(t)
    =
    r_t^{M-2}
    \int_{F_c^{-1}(0)}
    \vartheta(r_tx_{\le K})
    \fr{\varphi_M(r_tx)}{\|\nabla F_c(x)\|}
    \,\de\cH^{M-1}(x).
  \]
  The zero fiber is compact, and $\|\nabla F_c\|$ is bounded away from zero on it.
  Continuity therefore follows from dominated convergence.

  By \eqref{e:finite-coarea-pin},
  \beq\label{e:finite-pin-level-integral}
    \int\vartheta(z_{\le K})\,\nu_c(\de z)
    =\fr{A_\vartheta(0)}{p_c(0)}.
  \eeq
  The coarea formula and the approximate-identity property give
  \[
    \bbE\bigl[\vartheta(g_{\le K})\eta_\eps(F_c(g))\bigr]
    =
    \int\eta_\eps(t)A_\vartheta(t)\,\de t
    \rightarrow
    A_\vartheta(0).
  \]

  Since the tail contains at least three positive coefficients, the modulus of the characteristic function of $T_{c,>K}$ is $O(|u|^{-3/2})$.
  Fourier inversion therefore gives the bounded continuous density $p_{c,>K}$.
  Independence of the first $K$ coordinates and the tail yields
  \[
    \begin{split}
      &\bbE[\vartheta(g_{\le K})\eta_\eps(F_c(g))]
      =
      \int_{\bbR^K}
      \vartheta(z)\varphi_K(z)
      \lt[
        \int_\bbR
        \eta_\eps(S_{c,K}(z)+r)p_{c,>K}(r)\,\de r
      \rt]
      \,\de z.
    \end{split}
  \]
  Because $\eta$ is even, the expression in brackets satisfies
  \[
    (\eta_\eps*p_{c,>K})(-S_{c,K}(z))
    \rightarrow
    p_{c,>K}(-S_{c,K}(z)),
    \qquad
    |(\eta_\eps*p_{c,>K})(-S_{c,K}(z))|
    \le
    \|p_{c,>K}\|_\infty.
  \]
  Dominated convergence gives
  \[
    \bbE[\vartheta(g_{\le K})\eta_\eps(F_c(g))]
    \rightarrow
    \int_{\bbR^K}
    \vartheta(z)\varphi_K(z)
    p_{c,>K}(-S_{c,K}(z))
    \,\de z.
  \]
  Comparing the last two limits and substituting in \eqref{e:finite-pin-level-integral} proves \eqref{e:finite-pin-block-density}.
\end{proof}

\begin{lem}\label{l:pinned-law-continuity}
  Let $c^{(n)}=(c_k^{(n)})_{k\ge1}$ be finite positive vectors with at least three active coordinates, padded by zeros, and suppose that
  $
    c^{(n)}\rightarrow a
  $ in $\ell^2$, 
  where $a\in\cA$, with $\cA$ as in Lemma~\ref{l:parameter-measurability}.
  Then
  \[
    \bbW_2(\fP_{c^{(n)}},\fP_a)\rightarrow0,
    \qquad
    V_{c^{(n)}}\rightarrow V_a.
  \]
\end{lem}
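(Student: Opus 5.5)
The plan is a coupling argument in two regimes, with a truncation level $K$ separating them. On the finite head $k\le K$, Lemma~\ref{l:finite-pin-block-marginals} gives explicit densities, and I will show these converge to the corresponding marginal \eqref{e:canonical-pin-marginal-s} of $\nu_a$. On the tail $k>K$, I will show the contribution to the overlap is uniformly small in $L^2$. Throughout, write $M_n$ for the number of active coordinates of $c^{(n)}$. Since $c^{(n)}\to a$ in $\ell^2$ and every $a_k>0$, we have $M_n\to\infty$. Hence for fixed $K$ and large $n$ we have $K\le M_n-3$, so the lemma applies.

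\emph{Step 1: convergence of block marginals.} Fix $K$. The first-$K$ marginal of $\nu_{c^{(n)}}$ has density
\[
\varphi_K(z)\,p_{c^{(n)},>K}\bigl(-S_{c^{(n)},K}(z)\bigr)/p_{c^{(n)}}(0).
\]
The $\nu_a$ marginal has the same form, with $a$ in place of $c^{(n)}$. Both densities are given by Fourier inversion, so I will prove $p_{c^{(n)},>K}\to p_{a,>K}$ uniformly on $\bbR$ and $p_{c^{(n)}}(0)\to p_a(0)>0$ (positivity by Lemma~\ref{l:constraint-densities}). This needs $L^1$ convergence of the characteristic functions $\prod_{k>K}e^{-iuc_k}(1-2iuc_k)^{-1/2}$.
\begin{itemize}
\item Pointwise convergence follows from the bound $|L_u(v)-L_u(w)|\lesssim u^2(|v|+|w|)|v-w|$ together with $\ell^2$ convergence.
\item The integrable dominating function is the main obstacle. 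The decay \eqref{e:Phi-decay} uses the asymptotics $a_kt_k\to1$, which $c^{(n)}$ need not satisfy. I would bound
\[
|\Phi_{c^{(n)},>K}(u)|=\prod_{k>K}\bigl(1+4u^2(c^{(n)}_k)^2\bigr)^{-1/4}
\]
using only coordinates $K<k\le R$ for a large fixed $R$, where $c^{(n)}_k\ge a_k/2$ once $n$ is large. This gives a bound of order $|u|^{-(R-K)/2}$, uniformly in $n$, which is integrable as soon as $R-K\ge3$.
\end{itemize}
With this domination, dominated convergence gives uniform convergence of the densities. Scheffé's lemma then gives total-variation convergence of the $K$-marginals, and also convergence of all moments of these marginals, since the densities are uniformly bounded by a constant times $\varphi_K$.

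\emph{Step 2: uniform tail control.} I need
\[
\sup_k \bbE_{\nu_{c^{(n)}}}[g_k^2]\le C
\]
uniformly in large $n$. The route is the Fourier ratio formula \eqref{e:pin-moment-formula} for finite pins, which follows exactly as in Lemma~\ref{l:pin-moments} using the block density above. It bounds the second moments by $\int|\Phi_{c^{(n)}}|\,/\,(2\pi p_{c^{(n)}}(0))$, and both numerator and denominator are controlled uniformly by Step 1. Sign symmetry of finite pins then gives, as in \eqref{e:bilinear-tail-isometry},
\[
\bbE\Bigl|\sum_{k>K}c^{(n)}_kg^{(1)}_kg^{(2)}_k\Bigr|^2\le C^2\sum_{k>K}(c^{(n)}_k)^2,
\]
and the same bound holds for $\nu_a$. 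The right-hand side tends to $0$ as $K\to\infty$, uniformly in large $n$, by $\ell^2$ convergence.

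\emph{Step 3: conclusion.} Couple the head blocks optimally, using the total-variation and moment convergence from Step 1 together with $c^{(n)}_k\to a_k$ for $k\le K$. Combined with the tail bound from Step 2, this gives
\[
\limsup_n \bbW_2(\fP_{c^{(n)}},\fP_a)\lesssim \Bigl(\sum_{k>K}a_k^2\Bigr)^{1/2}+o_K(1).
\]
Letting $K\to\infty$ shows $\bbW_2(\fP_{c^{(n)}},\fP_a)\to0$. Finally, $V=\bbW_2^2(\cdot,\delta_0)$ is continuous in $\bbW_2$, so $V_{c^{(n)}}\to V_a$ follows.
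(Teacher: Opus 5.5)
Your proposal is correct and follows essentially the same route as the paper. Both arguments use Fourier-inversion convergence of the constraint and tail densities, with a uniform integrable bound coming from three fixed coordinates; Scheff\'e's lemma for the head blocks; the finite-pin Fourier ratio formula for uniform second-moment bounds; and a truncation plus triangle-inequality argument. The only cosmetic difference is that you obtain second-moment convergence of the truncated overlap from the domination $h_{n,K}\le C\varphi_K$, whereas the paper uses the coordinatewise convergence $m_k(c^{(n)})\to m_k(a)$.
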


\begin{proof}
  We begin with convergence of the constraint densities.
  Write $\Phi_n$ and $\Phi$ for the characteristic functions of the unconditioned constraints associated with $c^{(n)}$ and $a$, respectively.
  Thus, with the principal square root,
  \[
    \Phi_n(u)
    =
    \prod_{k\ge1}
    e^{-iuc_k^{(n)}}(1-2iuc_k^{(n)})^{-1/2},
  \]
  where only finitely many factors are nontrivial, and $\Phi$ is the corresponding infinite product.
  For
  \[
    L_u(x)=-iux-\fr12\operatorname{Log}_{\mathrm{pr}}(1-2iux),
  \]
  direct differentiation gives
  \[
    L_u'(x)=-\fr{2u^2x}{1-2iux},
    \qquad
    |L_u'(x)|\le2u^2x,
    \qquad x\ge0.
  \]
  Consequently,
  \[
    |L_u(x)-L_u(y)|
    \le
    2|u|^2(x+y)|x-y|,
    \qquad
    x,y\ge0.
  \]
  It follows that $\Phi_n\to\Phi$ locally uniformly.
  Put
  \[
    \alpha=\fr12\min\{a_1,a_2,a_3\}>0.
  \]
  For all sufficiently large $n$, the first three coordinates of $c^{(n)}$ are at least $\alpha$, and hence
  \[
    \max\{|\Phi_n(u)|,|\Phi(u)|\}
    \le
    (1+4\alpha^2u^2)^{-3/4}.
  \]
  The right side is integrable, so Fourier inversion and dominated convergence give
  \beq\label{e:constraint-density-convergence}
    \|\Phi_n-\Phi\|_{L^1(\bbR)}\rightarrow0,
    \qquad
    p_{c^{(n)}}(0)\rightarrow p_a(0)>0.
  \eeq

  We next prove convergence of every fixed pinned block.
  Fix $K\ge1$.
  Since $c_{K+j}^{(n)}\to a_{K+j}>0$ for $j=1,2,3$, the active dimension of $c^{(n)}$ is at least $K+3$ for all sufficiently large $n$.
  The preceding characteristic-function argument, applied after deleting the first $K$ coordinates, gives
  \beq\label{e:tail-density-uniform-convergence}
    \|p_{c^{(n)},>K}-p_{a,>K}\|_\infty\rightarrow0.
  \eeq
  Lemma~\ref{l:finite-pin-block-marginals} shows that the first-$K$ marginal of the finite pin has density
  \[
    h_{n,K}(z)
    =
    \fr{\varphi_K(z)}{p_{c^{(n)}}(0)}
    p_{c^{(n)},>K}\lt(
      -\sum_{k=1}^Kc_k^{(n)}(z_k^2-1)
    \rt).
  \]
  The corresponding marginal of $\nu_a$ has density
  \[
    h_K(z)
    =
    \fr{\varphi_K(z)}{p_a(0)}
    p_{a,>K}\lt(
      -\sum_{k=1}^Ka_k(z_k^2-1)
    \rt)
  \]
  by \eqref{e:canonical-pin-marginal-s}.
  Equations \eqref{e:constraint-density-convergence} and \eqref{e:tail-density-uniform-convergence}, together with continuity of $p_{a,>K}$, imply $h_{n,K}(z)\to h_K(z)$ for every $z$.
  Both sides are probability densities, so Scheffé's lemma gives
  \beq\label{e:pinned-block-TV}
    \|h_{n,K}-h_K\|_{L^1(\bbR^K)}\rightarrow0.
  \eeq

  We also require moment bounds that are uniform in the coordinate and in $n$.
  For each finite pin, assign each inactive coordinate an independent standard Gaussian and set $m_k(c)=\bbE_{\nu_c}[g_k^2]$ under this convention.
  The Fourier-inversion argument from Lemma~\ref{l:pin-moments} applies to the finite pins and gives
  \[
    m_k(c^{(n)})
    =
    \fr{\displaystyle\int_{\bbR}\Phi_n(u)(1-2iuc_k^{(n)})^{-1}\,\de u}
    {\displaystyle\int_{\bbR}\Phi_n(u)\,\de u}.
  \]
  The common integrable bound above and \eqref{e:constraint-density-convergence}, together with $|(1-2iuc_k^{(n)})^{-1}|\le1$, imply
  \beq\label{e:uniform-finite-pin-second-moments}
    \limsup_{n\to\infty}\sup_{k\ge1}m_k(c^{(n)})<\infty.
  \eeq
  The same formula, the common integrable bound above, and dominated convergence give, for every fixed $k$,
  \beq\label{e:fixed-pin-moment-convergence}
    m_k(c^{(n)})\rightarrow m_k(a).
  \eeq

  Let
  \[
    Q_{n,K}
    =
    \sum_{k=1}^Kc_k^{(n)}g_k^{(1)}g_k^{(2)},
    \qquad
    Q_K
    =
    \sum_{k=1}^Ka_kg_k^{(1)}g_k^{(2)},
  \]
  where the two replicas in each expression have the corresponding first-$K$ pinned marginal.
  Equation \eqref{e:pinned-block-TV} implies total-variation convergence of the two-replica marginals.
  Since the coefficients converge coordinatewise, $Q_{n,K}\Rightarrow Q_K$.
  Coordinatewise sign symmetry and replica independence give
  \[
    \bbE Q_{n,K}^2
    =
    \sum_{k=1}^K(c_k^{(n)})^2m_k(c^{(n)})^2
    \rightarrow
    \sum_{k=1}^Ka_k^2m_k(a)^2
    =
    \bbE Q_K^2
  \]
  by \eqref{e:fixed-pin-moment-convergence}.
  The weak-plus-second-moment characterization of Wasserstein convergence (see, e.g., \cite[Theorem 6.9]{Villani2009}) yields for every fixed $K$,
  \beq\label{e:truncated-pin-W2-convergence}
    \bbW_2\bigl(\Law(Q_{n,K}),\Law(Q_K)\bigr)\rightarrow0.
  \eeq

  It remains to remove the truncation.
  Let $Q_n$ and $Q$ denote the full finite and infinite bilinear forms.
  On the natural coupling with their truncations, coordinatewise sign symmetry gives
  \[
    \bbE|Q_n-Q_{n,K}|^2
    =
    \sum_{k>K}(c_k^{(n)})^2m_k(c^{(n)})^2,
    \qquad
    \bbE|Q-Q_K|^2
    =
    \sum_{k>K}a_k^2m_k(a)^2.
  \]
    The natural couplings and the triangle inequality give
  \[
    \begin{split}
      \bbW_2(\fP_{c^{(n)}},\fP_a)
      &\le \bbW_2\bigl(\Law(Q_{n,K}),\Law(Q_K)\bigr)+
      \lt(\sum_{k>K}(c_k^{(n)})^2m_k(c^{(n)})^2\rt)^{1/2}+
      \lt(\sum_{k>K}a_k^2m_k(a)^2\rt)^{1/2}\\
      &\stackrel{\eqref{e:uniform-finite-pin-second-moments}}{\le} \bbW_2\bigl(\Law(Q_{n,K}),\Law(Q_K)\bigr)+O(1)\cdot \lt(\sum_{k>K}(c_k^{(n)})^2\rt)^{1/2}+O(1)\cdot
      \lt(\sum_{k>K}a_k^2\rt)^{1/2}\,.
    \end{split}
  \]
Moreover, we can bound
  \[
    \sum_{k>K}(c_k^{(n)})^2
    \le
    2\|c^{(n)}-a\|_2^2
    +2\sum_{k>K}a_k^2.
  \]
  Letting first $n\to\infty$ in \eqref{e:truncated-pin-W2-convergence} and then $K\to\infty$ proves
  $
    \bbW_2(\fP_{c^{(n)}},\fP_a)\rightarrow0.
  $
  Convergence of second moments is part of $\bbW_2$ convergence, and these second moments are $V_{c^{(n)}}$ and $V_a$.
\end{proof}

\begin{proof}[Proof of Theorem~\ref{t:main}\ref{i:main-sphere}]
  Let
  \[
    \mathsf M_N^\sph
    =
    \la\delta(N^{1/3}R_{1,2})\ra^\sph.
  \]
  Lemma~\ref{l:finite-spherical-pin} gives, conditionally on $\bW$,
  \beq\label{e:spherical-finite-pin-identity}
    \mathsf M_N^\sph=\fP_{a_N}.
  \eeq
  The finite pinned-law extension is Borel on $\ell^2$ by Lemma~\ref{l:finite-pin-measurability}, and the limiting pinned-law map is Borel on $\cA$ by Lemma~\ref{l:parameter-measurability}.

  Proposition~\ref{p:augmented-edge-convergence} and Skorokhod's representation theorem give copies $\widetilde a_N$ and $\widetilde a$ of $a_N$ and $a(\chi)$ such that $\widetilde a_N\to\widetilde a$ in $\ell^2$ almost surely.
  Set $\widetilde{\mathsf M}_N=\fP_{\widetilde a_N}^{\mathrm{fin}}$, and set $\widetilde{\mathsf M}=\fP_{\widetilde a}$ on $\{\widetilde a\in\cA\}$ and $\widetilde{\mathsf M}=\delta_0$ on its null complement.
  These are measurable random elements of $(\cP_2(\bbR),\bbW_2)$.
  On a common probability-one event, every $\widetilde a_N$ is a finite positive vector padded by zeros, $\widetilde a\in\cA$, and the displayed convergence holds.
  Lemma~\ref{l:pinned-law-continuity} applies pathwise, and $\fP_{\widetilde a_N}^{\mathrm{fin}}=\fP_{\widetilde a_N}$ almost surely, so
  \[
    \bbW_2\bigl(\widetilde{\mathsf M}_N,\widetilde{\mathsf M}\bigr)
    \rightarrow0
    \qquad\text{almost surely}.
  \]
  Returning to the original laws and using \eqref{e:spherical-finite-pin-identity}, we conclude that
  \beq\label{e:spherical-outer-weak-convergence}
    \Law(\mathsf M_N^\sph)
    \Rightarrow
    \Law(\fP_{a(\chi)})\qquad\text{on }(\cP_2(\bbR),\bbW_2)\,.
  \eeq
  Proposition~\ref{p:limit-distribution-square-integrable-random} shows that the limiting law belongs to $\cP_2(\cP_2(\bbR))$.
  Lemma~\ref{l:relative-compactness} says that the sequence on the left of \eqref{e:spherical-outer-weak-convergence} is relatively compact for $\cW_2$.
  Every subsequence therefore has a further subsequence converging in $\cW_2$.
  Such convergence implies weak convergence on $(\cP_2(\bbR),\bbW_2)$, so \eqref{e:spherical-outer-weak-convergence} forces every resulting limit to equal $\Law(\fP_{a(\chi)})$.
  The subsequence principle gives
  \[
    \Law(\mathsf M_N^\sph)
    \stackrel{\cW_2}{\longrightarrow}
    \Law(\fP_{a(\chi)}).\qedhere
  \]
\end{proof}

\subsection{Deduction of the Ising overlap limit}
\label{ss:ising-overlap-limit}

\begin{proof}[Proof of Theorem~\ref{t:main}\ref{i:main-cube}]
  By the triangle inequality,
  \balnn
      \cW_2\lt(
        \Law\lt(\la\delta(N^{1/3}R_{1,2})\ra\rt),
        \Law(\fP_{a(\chi)})
      \rt)
      &\le
      \cW_2\lt(
        \Law\lt(\la\delta(N^{1/3}R_{1,2})\ra\rt),
        \Law\lt(\la\delta(N^{1/3}R_{1,2})\ra^\sph\rt)
      \rt) \\
      &\qquad+
      \cW_2\lt(
        \Law\lt(\la\delta(N^{1/3}R_{1,2})\ra^\sph\rt),
        \Law(\fP_{a(\chi)})
      \rt).
  \ealnn
  The first term tends to zero by Theorem~\ref{t:sphere-to-cube}, and the second tends to zero by Theorem~\ref{t:main}\ref{i:main-sphere}.
\end{proof}

\bibliographystyle{alpha}
\bibliography{bib}

\end{document}